\newtheorem{theorem}{Theorem}
\newtheorem{lemma}[theorem]{Lemma}
\newtheorem{remark}[theorem]{Remark}
\newtheorem{proposition}[theorem]{Proposition}
\newtheorem{definition}[theorem]{Definition}
\newtheorem{corollary}[theorem]{Corollary}
\newcommand{\Thm}{\textup{Thm}}
\theoremstyle{definition}
\newcommand{\cC}{\mathcal{C}}
\newcommand{\cD}{\mathcal{D}}
\newcommand{\cH}{\mathcal{H}}
\newcommand{\cT}{\mathcal{T}}
\newcommand{\RR}{\mathbb R}
\newcommand{\T}{\mathbb T}
\renewcommand{\S}{\mathbb S}
\newcommand{\e}{\varepsilon}
\def\set4{\mathcal I}
\def\tup14{(1,2,3,4)}
\newtheorem*{comm*}{Comment}
\newtheorem*{lemma*}{Lemma}
\newcommand{\supp}{\mathrm{supp}}
\newcommand{\R}{\mathbb{R}}
\newcommand{\de}{\delta} 
\newcommand{\De}{\Delta} 
\newcommand{\wh}{\widehat}
\newcommand{\wt}{\widetilde}
\newcommand{\si}{\sigma}
\newcommand{\Tau}{\mathcal T}
\newcommand{\dist}{\textup{dist}}
\begin{document}




 \author{Paige Bright}
 \address{Department of Mathematics\\
 Massachusetts Institute of Technology\\
 Cambridge, MA 02142-4307, USA}
 \email{paigeb@mit.edu}
 
 \author{Shengwen Gan}
 \address{Department of Mathematics\\
 Massachusetts Institute of Technology\\
 Cambridge, MA 02142-4307, USA}
 \email{shengwen@mit.edu}

\keywords{radial projection, exceptional estimate}
\subjclass[2020]{28A75, 28A78}

\date{}

\title[Exceptional set estimates]{Exceptional set estimates for radial projections in $\R^n$}
\maketitle

\begin{abstract}
We prove two conjectures in this paper.

\noindent
The first conjecture is by Lund, Pham and Thu:
Given a Borel set $A\subset \R^n$ such that $\dim A\in (k,k+1]$ for some $k\in\{1,\dots,n-1\}$. For $0<s<k$, we have
\[
\dim(\{y\in \R^n \setminus A\mid \dim (\pi_y(A)) < s\})\leq \max\{k+s -\dim A,0\}.
\]
The second conjecture is by Liu: Given a Borel set $A\subset \R^n$,
then 
\[
\dim (\{x\in \R^n \setminus A \mid \dim(\pi_x(A))<\dim A\}) \leq \lceil \dim A\rceil.
\]
\end{abstract}

\section{Introduction}

In this paper, we study the radial projections in $\R^n$.

Let $G(m,n)$ be the set of $m$-dimensional subspaces in $\R^n$, which is also known as the Grassmannian. For $V\in G(m,n)$, define $\pi_V:\RR^n\rightarrow V$ to be the orthogonal projection onto $V$.
Given $x\in \R^n$, define $\pi_x:\R^n\setminus\{x\}\rightarrow \mathbb S^{n-1}$ to be the radial projection centered at $x$:
\[
\pi_x(y) = \frac{y-x}{|y-x|}.
\]

We first discuss some background of the projection theory.
We use $\dim X$ to denote the Hausdorff dimension of the set $X$. 
There is a classical result proved by Marstrand \cite{marstrand1954some}, who showed that if $A$ is a Borel set in $\R^2$, then the projection of $A$ onto almost every line through the origin has Hausdorff dimension $\min\{1,\dim A\}$. This was generalized to higher dimensions by Mattila \cite{mattila1975hausdorff}, who showed that if $A$ is a Borel set in $\R^n$, then the projection of $A$ onto almost every $k$-plane through the origin has Hausdorff dimension $\min\{k,\dim A\}$. It turns out that one can obtain some finer results which are known as the exceptional set estimates. The exceptional set estimates give a bound on the set of directions where the projection is small.
There are two types of exceptional set estimates known as the Falconer-type estimate and Kaufman-type estimate.

Suppose $A\subset \RR^n$ is a Borel set of Hausdorff dimension $\alpha$. For $0\le s< \min\{m,\alpha\}$, define the exceptional set
\[
E_s(A) =\{V \in G(m,n) \mid \dim(\pi_V(A))<s \}.
\]
Then we have

\begin{enumerate}[label=(\roman*)]
    \item (Falconer-type) $\dim (E_s(A))\le \max\{m(n-m)+s-\alpha,0\}.$
    \item (Kaufman-type) $\dim (E_s(A))\le m(n-m-1)+s$.
\end{enumerate}
The original paper of Falconer and Kaufman are \cite{falconer1982hausdorff}, \cite{kaufman1968hausdorff}, where they considered the case when $n=2$. The Falconer-type estimate in higher dimensions was proved by Peres and Schlag \cite{peres2000smoothness}. We also recommend Theorem 5.10 in \cite{mattila2015fourier} for the proofs of these two types of the exceptional set estimates.

In this paper, we study the exceptional set estimates for the radial projections.
We first state our theorems.

\begin{theorem}\label{radfalconer}
Let $A\subset \R^n$ be a Borel set such that $\alpha = \dim A\in (k,k+1]$ for some $k\in\{1,\dots,n-1\}$. Fix $0<s<k$ and let 
\[
E_s(A) := \{y\in \R^n \setminus A\mid \dim (\pi_y(A)) < s\}.
\]
Then, 
\[
\dim(E_s(A))\leq \max\{k+s -\alpha,0\}.
\]
\end{theorem}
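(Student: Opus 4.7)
The plan is to isolate the ``top-dimensional'' case $k=n-1$ as the technical heart of the theorem and reduce the general case $k<n-1$ to it by a generic projection into a $(k+1)$-plane.

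The base case $k=n-1$ covers $\alpha\in(n-1,n]$, and here the target bound $\max\{n-1+s-\alpha,0\}$ coincides with the Falconer-type bound for orthogonal projections onto hyperplanes. Since the radial projection $\pi_y$ behaves locally like an orthogonal projection onto an $(n-1)$-plane, I would adapt the sharp hyperplane-projection machinery (e.g., the Peres-Schlag Sobolev-energy method, or a discretized tube-incidence/Furstenberg argument in the spirit of Orponen-Shmerkin-Wang) to this radial setting. Concretely, argue by contradiction using Frostman measures $\mu$ on $A$ of exponent $\alpha-\eta$ and $\nu$ on $E_s(A)$ of exponent $u>\max\{n-1+s-\alpha,0\}$, discretize at scale $\delta$, and for each $y\in\supp(\nu)$ use $\dim\pi_y(A)<s$ to extract a family of $\sim\delta^{-s}$ $\delta$-tubes through $y$ carrying most of the $\mu$-mass of $A$. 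A sharp tube-incidence inequality should then contradict the assumed value of $u$.

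For $k<n-1$, fix a generic $V\in G(k+1,n)$ and set $A_V=\pi_V(A)\subset V\cong\R^{k+1}$. Marstrand's projection theorem gives $\dim A_V=\alpha$ for a.e.\ $V$ (using $\alpha\le k+1$), and applying the base case inside $V$ yields $\dim E_s^V(A_V)\le\max\{k+s-\alpha,0\}$, where $E_s^V$ is the exceptional set of radial projections computed within $V$. The key transfer identity is $\pi_{y'}^V\circ\pi_V=\rho_V\circ\pi_y$, where $y'=\pi_V(y)$ and $\rho_V(v)=\pi_V(v)/|\pi_V(v)|$ is a locally Lipschitz map $\S^{n-1}\setminus V^\perp\to \S^k_V$; hence $\dim\pi_{y'}^V(A_V)\le\dim\pi_y(A)<s$ whenever $\pi_y(A)\cap V^\perp=\emptyset$. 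Since $\dim(V^\perp\cap\S^{n-1})=n-k-2$ and $\dim\pi_y(A)<s<k$, the intersection is empty for generic $V$ and $\nu$-almost every $y$ by a Marstrand-Mattila intersection estimate, giving $\pi_V(E_s(A))\subset E_s^V(A_V)$ up to a $\nu$-null set. Finally, Marstrand's theorem applied to $E_s(A)$ itself gives $\dim\pi_V(E_s(A))\ge\min(k+1,\dim E_s(A))$ for a.e.\ $V$, and since $k+s-\alpha<k+1$ (as $s<k\le\alpha$), this forces $\dim E_s(A)\le\max\{k+s-\alpha,0\}$.

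The main obstacle is the base case $k=n-1$: while the reduction and transfer steps use only standard Marstrand-Mattila machinery, establishing the sharp tube-incidence inequality for radial projections to $\S^{n-1}$ when $\dim A$ is close to $n$ is the core nontrivial ingredient, and I expect this will require extending recent $\R^n$-Furstenberg-set estimates rather than invoking them off the shelf.
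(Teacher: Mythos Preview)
Your high-level strategy matches the paper's exactly: isolate the top-dimensional case $k=n-1$ (where the target bound is $\max\{n-1+s-\alpha,0\}$) as the technical core, and reduce the general case to it by projecting both $A$ and the exceptional set into a generic $(k+1)$-plane. The transfer identity $\pi_{y'}^V\circ\pi_V=\rho_V\circ\pi_y$ you wrote down is precisely what the paper uses (implicitly, in the last lines of the proof of Theorem~\ref{discradial}).

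The implementations differ in one substantive way. You carry out the reduction at the continuous level, invoking classical Marstrand--Mattila for both $\dim A_V=\alpha$ and $\dim\pi_V(E_s(A))\ge\min(k+1,\dim E_s(A))$. The paper instead passes to a $\delta$-discretized statement (Theorem~\ref{discradial}) first, and then performs the projection step at scale $\delta$: a \emph{quantitative} Marstrand theorem (Proposition~\ref{1prop}, proved by a high-low Fourier argument on the Grassmannian) replaces your appeal to classical Marstrand, guaranteeing that for most $V\in G(k+1,n)$ the projected $\delta$-sets retain their Frostman content uniformly over all large subsets. This uniformity is what lets the base case be applied inside $V$ to the discretized data. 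Your continuous reduction is morally correct, but has wrinkles you would need to address: the exceptional set $E_s(A)$ is not obviously Borel, the $V$-dependent null set $N_V$ interacts awkwardly with the ``a.e.~$V$'' in Marstrand, and the containment $\pi_V(y)\notin A_V$ needs checking. The discretized route sidesteps all of these.

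For the base case $k=n-1$ itself, which you correctly flag as the main obstacle, the paper does \emph{not} use a Peres--Schlag Sobolev estimate. It gives a direct proof (Proposition~\ref{2prop} and Lemma~\ref{discorp}) via the high-low method: build $f=\sum_{y\in E}\sum_{T\in\mathcal T_y}\psi_T$ from the tube-bushes, show the low-frequency part is negligible because each bush is a $(\Delta,\sigma)$-set with $\sigma<n-1$, and control the high part by bounding $\sum_T n_T^2$ (where $n_T$ counts how many bushes contain $T$) using the $(\Delta,t)$-structure of $E$. This is closer in spirit to your ``discretized tube-incidence'' suggestion than to the Sobolev-energy method.
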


\begin{theorem}\label{liusbound}
Let $A\subset \R^n$ be a Borel set such that $\alpha = \dim A\in (k-1,k]$ for some $k\in\{1,\dots,n-1\}$. 
Define the exceptional set
\[
E(A) := \{x\in \R^n \setminus A \mid \dim(\pi_x(A))<\alpha\}.
\]
Then we have 
\[
\dim (E(A)) \leq k.
\]
\end{theorem}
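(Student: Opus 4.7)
The plan is to argue by contradiction. Suppose $\dim E(A) > k$, so there is a compact set $F \subseteq E(A)$ with $\dim F > k$ carrying a Frostman measure $\nu$ of exponent $t \in (k, \dim F)$. Let $\mu$ be a Frostman measure on $A$ of exponent $s$ just below $\alpha$. The goal is to show that $\dim \pi_x(A) \geq s$ for $\nu$-a.e.\ $x \in F$; sending $s \uparrow \alpha$ will then contradict $F \subseteq E(A)$. As a first reduction, Theorem \ref{radfalconer} applied with index $k-1$ (valid since $\alpha \in (k-1,k]$) yields
\[
\dim\{x : \dim \pi_x(A) < k-1-\eta\} \leq \max\{2(k-1)-\eta-\alpha,\,0\} < k - 1
\]
for every $\eta > 0$. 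Since this is strictly smaller than $t$, the measure $\nu$ assigns zero mass to this set, and after discarding it I may assume $\dim \pi_x(A) \geq k - 1$ for every $x \in F$. The remaining task is to upgrade this lower bound from $k-1$ to $s$ on a set of full $\nu$-measure.

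The standard route is an averaged energy argument: try to show that the averaged $s$-energy
\[
\iiint \frac{d\mu(y_1)\, d\mu(y_2)\, d\nu(x)}{|\pi_x(y_1) - \pi_x(y_2)|^s}
\]
is finite, which forces $\dim \pi_x(A) \geq s$ for $\nu$-a.e.\ $x$. Inserting the geometric identity $|\pi_x(y_1) - \pi_x(y_2)| \asymp |y_1 - y_2|\,\dist(x, \ell_{y_1 y_2}) / (|x - y_1|\,|x - y_2|)$ and using the separation of $A$ and $F$ to bound $|x - y_i|$ from below, the triple integral essentially factors as the $s$-energy of $\mu$ (which is finite because $s < \alpha$) multiplied by $\sup_\ell \int \dist(x, \ell)^{-s}\, d\nu(x)$.

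The main obstacle is controlling this supremum, which demands a uniform tube bound of the form $\nu(\{x : \dist(x,\ell) \leq r\}) \lesssim r^{s+\eps}$. The naive Frostman estimate gives only $\nu(\text{tube}) \lesssim r^{t-1}$, and since $t$ may be only marginally above $k$ while $s$ approaches $\alpha \leq k$, there is a gap of nearly one full dimension to close. Overcoming it should use $\dim F > k$ in a more essential manner: a natural strategy is a second application of Theorem \ref{radfalconer} to $F$ itself (whose exceptional set, of dimension strictly below $k$, is $\nu$-null), in order to refine $\nu$ to a measure that does not concentrate near any single line. This line-avoidance refinement, combined with a dyadic pigeonholing in scales, should be the heart of the proof.
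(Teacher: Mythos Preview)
Your proposal correctly isolates the obstruction but does not overcome it. The averaged-energy computation reduces to the uniform tube bound $\nu(\{x:\dist(x,\ell)\le r\})\lesssim r^{s+\eps}$, and you observe that the Frostman condition yields only exponent $t-1$, which can fall short of $s$ by almost $1$. Your remedy --- ``a second application of Theorem~\ref{radfalconer} to $F$ itself'' to produce a line-avoiding refinement of $\nu$ --- is left as an intention rather than an argument. Theorem~\ref{radfalconer} controls the dimension of the set of \emph{bad projection centres} for $F$; it does not, by itself, upgrade the tube decay of a Frostman measure on $F$ from $r^{t-1}$ to $r^{s}$. Without a concrete mechanism for that upgrade, the energy integral remains unbounded and the proof is incomplete.

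The paper closes the gap by a genuinely different idea, the Orponen--Shmerkin \emph{swapping trick}. One does apply the Falconer-type radial estimate to $F$, but not to refine $\nu$'s tube decay. Instead one reverses the roles: since $\dim F=t>k$ and $0<s<k$, the inequality $s>\max\{k+s-t,0\}$ holds, so the $\de$-discretised Theorem~\ref{discradial} (with $\si=s$, $a=t$, and the roles of ``set projected'' and ``set of centres'' exchanged) gives, for most $y\in E$, a bush of $\de$-tubes $\T^y$ through $y$ that covers almost all of $F$ and is organised as a union of $(\de,s)$-sets. One then counts incidences $I(E',\T)=\#\{(y,T):T\in\T^y\}$: the counter-assumption $|\pi_x(E')|_\de<\de^{-s+\tau_0}$ forces many tubes to be ``high-density'' (contained in $\T^y$ for $\gtrsim\de^{-\tau_0/2}$ points $y$), giving a large lower bound; a Cauchy--Schwarz upper bound, using that each $\T^y$ is a union of $(\de,s)$-sets and $E'$ is a $(\de,s)$-set, forces $I(E',\T)\lessapprox\#\T$. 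The two bounds are incompatible. The essential point is that the $(\de,s)$-structure of the bushes $\T^y$ comes for free from the swapped Falconer estimate, whereas your approach tries to extract an equivalent structure (the tube bound on $\nu$) from the wrong side.
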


Theorem \ref{liusbound} is sharp. If we let $A$ be an $\alpha$-dimensional subset of $\R^k$, we see that $E(A)=\R^k\setminus A$ which has dimension $k$.

We remark that Theorem \ref{radfalconer} is a conjecture made by Lund, Pham and Thu (see \cite[Conjecture 1.2]{lund2022radial}); Theorem \ref{liusbound} is made by Liu (see \cite[Conjecture 1.2]{liu2019hausdorff}).

Recently, Orponen and Shmerkin \cite{orponenshmerkin2022exceptional} proved the $n=2$ case for both Theorem \ref{radfalconer} and Theorem \ref{liusbound}. Their proof of Theorem \ref{radfalconer} (when $n=2$) is based on a Furstenberg-type estimate due to Fu and Ren \cite{fu2021incidence}. Then by a swapping trick, they are able to prove Theorem \ref{liusbound} (when $n=2$). In this paper, we prove the Theorems for all dimensions. We remark that the upper bound in Theorem \ref{radfalconer} is a Falconer-type bound.

Theorem \ref{radfalconer} is a result of
Proposition \ref{1prop} and Proposition \ref{2prop}. Finally, the proof of Theorem \ref{liusbound} is based on Proposition \ref{discliu} and a trick of Orponen and Shmerkin \cite{orponenshmerkin2022exceptional}.

We talk about the structure of the paper. In Section \ref{sec2}, we prove Theorem \ref{radfalconer}. In Section \ref{sec3}, we prove Theorem \ref{liusbound}.

\subsection{Some notations}
We will frequently use the following definitions.
\begin{definition}\label{dessetsd1}
For a number $\de>0$ and any set $X$ (in a metric space), we use $|X|_\de$ to denote the maximal number of $\de$-separated points in $X$.
\end{definition}

\begin{definition}\label{dessetsd2}
Let $\de,s>0$. We say $A\subset \R^n$ is a $(\de,s,C)$\textit{-set} if it is $\de$-separated and satisfies the following estimate:
\begin{equation}\label{deltas}
    \# (A\cap B_r(x)) \le C (r/\de)^s.
\end{equation}
for any $x\in \R^n$ and $1\ge r\geq \de$. In this paper, the constant $C$ is not important, so we will just say $A$ is a $(\de,s)$-set if
\[ \#(A\cap B_r(x))\lesssim (r/\de)^s \]
for any $x\in \R^n$ and $1\ge r\geq \de$.
\end{definition}

\begin{remark}
    {\rm
    Since the condition \eqref{deltas} is for scales $\ge \de$, we can abuse the notation to define: for $A'=\sqcup B_\de$ being a union of disjoint $\de$-balls, we say $A$ is a $(\de,s)$-set if
    \[ \#\{B_\de: B_\de\subset A\cap B_r(x)\}\lesssim (r/\de)^s. \]
    This definition is consistent with the previous definition: If $A$ is a $(\de,s)$-set, then the $\de$-neighborhood of $A$ is also a $(\de,s)$-set in the new sense; conversely, if $A'$ is a disjoint union of $\de$-balls and is a $(\de,s)$-set in the new sense, then the set of centers of the $\de$-balls in $A'$ is a $(\de,s)$-set in the old sense. Therefore, it makes sense to say a set $A$ is a $(\de,s)$-set if $A$ is $\de$-separated or $A$ is a disjoint union of $\de$-balls.}
\end{remark}

\begin{remark}
{\rm
Throughout the rest of this paper, We will use $\#E$ to denote the cardinality of a set $E$ and $\lvert \cdot\rvert$ to denote the measure of a region.}
\end{remark}

We state two lemmas:
\begin{lemma}\label{frostmans}
Let $\de,s>0$ and let $B\subset \R^n$ be any set with $\mathcal H^s_\infty(B) =: \kappa >0$. Then, there exists a $(\de,s)$-set $P\subset B$ with $\#P\gtrsim \kappa \de^{-s}$.
\end{lemma}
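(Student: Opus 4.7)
The plan is to build $P$ by a simple greedy procedure and then lower bound its size by comparing a natural cover of $B$ to the Hausdorff $s$-content. Fix a constant $C\ge 2$. Initialize $P=\emptyset$ and $B'=B$; at each step, if there exists $y\in B'$ whose addition to $P$ still yields a $(\delta,s,C)$-set---equivalently $|(P\cup\{y\})\cap B_r(x)|\le C(r/\delta)^s$ for every $x\in\R^n$ and every $r\in[\delta,1]$---add $y$ to $P$ and replace $B'$ by $B'\setminus B_\delta(y)$; otherwise stop. By construction the final $P\subset B$ is a $\delta$-separated $(\delta,s,C)$-set, so the only remaining task is to lower bound $\#P$.

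For each $y\in B'$ at termination, the failure of the greedy step produces $x=x(y)$ and $r=r(y)\in[\delta,1]$ with $y\in B_r(x)$ and $|P\cap B_r(x)|+1>C(r/\delta)^s$. Because $r\ge\delta$ and $C\ge 2$, a short integer arithmetic gives the nontrivial bound $|P\cap B_r(x)|\ge (C-1)(r/\delta)^s \gtrsim (r/\delta)^s$. The balls $\{B_{r(y)}(x(y))\}_{y\in B'}$ then cover $B'$, so the Vitali $5r$-covering lemma produces a disjoint subfamily $\{B_{r_i}(x_i)\}_i$ whose five-fold dilates still cover $B'$. Disjointness combined with the density lower bound on each $B_{r_i}(x_i)$ gives $\sum_i r_i^s\lesssim \delta^s\,\#P$.

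Finally, the collection $\{B_\delta(p):p\in P\}\cup\{B_{5r_i}(x_i)\}_i$ covers $B=B'\cup\bigcup_{p\in P}B_\delta(p)$, so its total $s$-content is at most $\#P\cdot\delta^s+5^s\sum_i r_i^s\lesssim \#P\cdot\delta^s$. Comparing with $\mathcal{H}^s_\infty(B)\ge\kappa$ yields $\#P\gtrsim \kappa\delta^{-s}$, as required. The main subtlety I anticipate is ensuring the leftover density bound is nontrivial: without the constraint $C\ge 2$ the estimate $|P\cap B_r(x)|\ge (C-1)(r/\delta)^s$ collapses for $r$ close to $\delta$. The implicit constant in the final bound depends only on $s$ and the dimensional Vitali constant, not on $\delta$ or $\kappa$.
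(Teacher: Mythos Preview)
Your argument is correct. The paper does not supply its own proof here---it simply cites \cite{fassler2014restricted}, Lemma~3.13---and the greedy/maximality argument you give (build a maximal $(\delta,s,C)$-subset, note that every leftover point lies in a ball where $P$ is $\gtrsim (r/\delta)^s$-dense, then cover $B$ and compare to the content) is essentially the standard proof found in that reference, so there is nothing further to compare.
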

\begin{proof}
See \cite{fassler2014restricted} Lemma 3.13.
\end{proof}

\begin{lemma}\label{frostmans3} Fix $a>0$.
Let $\nu$ be a probability measure satisfying $\nu(B_r)\lesssim r^a$ for any $B_r$ being a ball of radius $r$. If $A$ is a set satisfying $\nu(A)\ge \kappa$ ($\kappa>0$), then for any $\de>0$ there exists a subset $F\subset A$ such that $F$ is a $(\de,a)$-set and $\# F\gtrsim \kappa \de^{-a}$.
\end{lemma}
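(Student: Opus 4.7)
The plan is to reduce this lemma directly to Lemma \ref{frostmans} by extracting a lower bound on the Hausdorff content of $A$ from the Frostman-type condition on $\nu$.

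First I would show that $\mathcal H^a_\infty(A) \gtrsim \kappa$. Let $\{B_i\}$ with $B_i$ a ball of radius $r_i$ be any cover of $A$. Since $\nu$ is a probability measure with $\nu(B_r) \lesssim r^a$, we get
\[
\kappa \le \nu(A) \le \sum_i \nu(B_i) \lesssim \sum_i r_i^a.
\]
Taking the infimum over all such covers yields $\mathcal H^a_\infty(A) \gtrsim \kappa$, with the implicit constant depending only on the constant in the Frostman-type hypothesis on $\nu$.

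Next I would apply Lemma \ref{frostmans} with $s = a$ and $B = A$. This immediately produces a $(\delta,a)$-set $F \subset A$ satisfying $\#F \gtrsim \mathcal H^a_\infty(A)\,\delta^{-a} \gtrsim \kappa\,\delta^{-a}$, which is exactly the conclusion we want.

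There is really no substantial obstacle here; the only thing to take care of is that the constants absorbed into the $\lesssim$ in the first step (from the Frostman condition on $\nu$) combine cleanly with those coming from Lemma \ref{frostmans}. Since the hypothesis $\nu(B_r)\lesssim r^a$ has an implicit constant independent of $r$, and Lemma \ref{frostmans} is quantitative with a constant depending only on $n$, both contributions are absorbed into the final $\gtrsim$.
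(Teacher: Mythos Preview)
Your proposal is correct and follows essentially the same approach as the paper: first bound $\mathcal H^a_\infty(A)\gtrsim\kappa$ via the Frostman condition on $\nu$ applied to an arbitrary cover, then invoke Lemma~\ref{frostmans}. The paper's proof is identical in structure and brevity.
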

\begin{proof}
By the previous lemma, we just need to show $\cH^a_\infty(A)\gtrsim \kappa$. We just check it by definition. For any covering $\{B\}$ of $A$, we have
$$\kappa\le \sum_B\nu(B)\lesssim \sum_B r(B)^a. $$
Ranging over all the covering of $A$ and taking infimum, we get $$\kappa\lesssim \cH^a_\infty(A).$$
\end{proof}

\subsection{\texorpdfstring{$\de$}{Lg}-tube and \texorpdfstring{$\de$}{Lg}-slab}\hfill

One of the main geometric objects we will study is the so-called $\de$-tube. In $\R^n$, we call $T$ a $\de$-tube, if $T$ is a tube of radius $\de$ and length $1$. If $T'$ is a convex set that is comparable to a $\de$-tube $T$ (here when we say $T$ and $T'$ are comparable, it means $10^{-1}T\subset T'\subset 10T$), then we also call $T'$ a $\de$-tube. Therefore, if $T$ is a rectangle of dimensions $\sim \de\times\dots\times \de\times 1$, then $T$ is also a $\de$-tube.

For two $\de$-tubes $T$ and $T'$, we say they are comparable, if $10^{-1}T\subset T'\subset 10T$. We say they are essentially distinct, if they are not comparable.

In this paper, we will frequently encounter the following situation. There are two finite sets $E,F\subset B^n(0,1)$ (here $B^n(0,1)$ is the unit ball in $\R^n$ centered at the origin).  Each of $E$ and $F$ is contained in a ball of radius $1/8$, and $\dist(E,F)>1/2$. We use letter $y$ to denote the points in $E$, $x$ to denote the points in $F$. $F$ is a $(\de,\alpha)$-set, and $E$ is a $(\de,t)$-set. We can view $F$ (or $E$) as a $\de$-discretized version of $A$ (or $E_s(A)$) in Theorem \ref{radfalconer}. If $y\in E$ is in the exceptional set, then the maximal 
$\de$-separated subset of $\pi_y(F)$ is roughly a $(\de,s)$-set in $\S^{n-1}$. We would like to use another geometric object to characterize $\pi_y(F)$. For every $\omega\in \S^{n-1}$, we can define a tube $T_\omega$ which is the $\de$-neighborhood of the line segment $\{y+t\omega: t\in[0,1]\}$. Roughly speaking, $T_\omega$ is a tube of dimensions $\sim \de\times \dots\times \de\times 1$ pointing to the direction $\omega$ and passing $y$.
In this correspondence, a maximal 
$\de$-separated subset of $\pi_y(F)$ gives rise a set of $\de$-tubes $\T^y$ that pass through $y$, and $\bigcup_{T\in \T^y}T \supset F $. We call $\T^y$ a \textit{bush} centered at $y$. And the $(\de,s)$ condition of $\pi_y(F)$ transfers to $\T^y$ which says that: If $T_r$ is a $r\times \dots\times r\times 1$-tube passing through $y$, then there are $\lesssim (r/\de)^s$ many tubes in $\T^y$ that are contained in $T_r$ ($\de\le r\le 1$). When we call a bush $\T^y$ centered at $y$ a $(\de,s)$-set, we mean that $\pi_y(\bigcup_{T\in\T^y}T)\subset \S^{n-1}$ is a $(\de,s)$-set. 

We have discussed the notion of a bush centered at $y$ and the definition for a bush to be a $(\de,s)$-set. We also need to consider another type of bush called the \textit{truncated bush}. If $\T^y$ is a bush centered at $y$, then for each $T\in \T^y$ we define the truncated tube
\[ \wt T=T\setminus B_{1/2}(y). \]
By truncation, $\T^y$ gives rise a truncated bush $\wt\T^y$ centered at $y$. The reason we do this truncation is that the tubes in $\wt \T^y$ are now essentially disjoint. This will be helpful in estimating the upper bound of integrals like
$\int_{\R^n} (\sum_y \sum_{T\in \wt\T^y} 1_T)^2$.

We will also study the geometric object called the $k$-dimensional $\de$-slab. They are of dimensions $\underbrace{\de\times \dots\times\de}_{n-k\textup{~times}}\times \underbrace{1\times \dots\times 1}_{k \textup{~times}}$. They are morally the $\de$-neighborhood of a $k$-dimensional plane truncated in a ball of radius $1$. In particular, a $\de$-tube is a $1$-dimensional $\de$-slab. 

\bigskip

\begin{sloppypar}
\noindent {\bf Acknowledgement.}
The research was done during the MIT SPUR program.
We would like to thank the MIT SPUR program. We would also like to thank Prof. Larry Guth for suggesting the problem
and helpful discussions, and Prof. Ankur Moitra and Prof. David Jerison for helpful discussions. We would also like to thank Bochen Liu for pointing out a gap in the previous version of the proof.
\end{sloppypar}

\section{Falconer-type estimates for radial projections}\label{sec2}

In this section of the paper, we prove Theorem \ref{radfalconer}.

We introduce some notations.
Fix $0\le\si$, $\de>0$. For a bounded set $E\subset \R^n$,
define 
$$ \cH_{\de,\infty}^s(E):=\inf\left\{ \sum_j r(D_j)^s: E\subset \cup_j D_j \right\}, $$
where the infimum runs over the coverings of $E$ by dyadic cubes $\{D_j\}$ with length $\ge \de$, and $r(D)$ denotes the length of the cube.
One may compare with the definition of
$$ \cH_{\infty}^s(E):=\inf\left\{ \sum_j r(D_j)^s: E\subset \cup_j D_j \right\}, $$
where the infimum runs over the coverings of $E$ by dyadic cubes $\{D_j\}$ (without assuming length $\ge\de$).

We state three useful lemmas about $\cH_{\de,\infty}^s$.
For any dyadic number $\de\le 1$, let $\cD_\de$ be the lattice $\de$-cubes in $[0,1]^m$.

\begin{lemma}\label{usefullemma}
Suppose $X\subset [0,1]^m$ with $\dim X< s$. Then for any $\e>0$, there exist dyadic cubes $\cC_{2^{-k}}\subset \cD_{2^{-k}}$ $(k>0)$ so that 
\begin{enumerate}
    \item $X\subset \bigcup_{k>0} \bigcup_{D\in\cC_{2^{-k}}}D, $
    \item $\sum_{k>0}\sum_{D\in\cC_{2^{-k}}}r(D)^s\le \e$,
    \item $\cC_{2^{-k}}$ satisfies the $s$-dimensional condition: For $l<k$ and any $D\in \cD_{2^{-l}}$, we have $\#\{D'\in\cC_{2^{-k}}: D'\subset D\}\le 2^{(k-l)s}$.
\end{enumerate}
\end{lemma}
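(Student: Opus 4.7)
The plan is to start from a raw dyadic covering with small $s$-mass furnished by $\dim X < s$, and then regularize it by a stopping-time argument to enforce the Frostman-type packing condition (3). Because $\dim X < s$, we have $\cH^s_\infty(X) = 0$, so for our $\e > 0$ we can choose a dyadic cover $X \subset \bigcup_j D_j$ with $\sum_j r(D_j)^s \le \e/2$. This already witnesses (1) and (2) with room to spare, but it need not satisfy (3).

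To regularize, for each dyadic cube $D \subset [0,1]^m$ define $M(D) := \sum_{j:\, D_j \subseteq D} r(D_j)^s$, and call $D$ \emph{bad} if $M(D) > r(D)^s$. Every bad cube satisfies $r(D)^s < M(D) \le \e/2$, so bad cubes have bounded side length, and by dyadic nesting each bad cube is contained in a unique maximal bad cube. Let $\cB$ be the collection of maximal bad cubes; then the elements of $\cB$ are pairwise disjoint. Define the regularized family
$$\cC := \{D_j : D_j \not\subset B \text{ for any } B \in \cB\} \cup \cB, \qquad \cC_{2^{-k}} := \cC \cap \cD_{2^{-k}}.$$

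Properties (1)--(3) are then straightforward. Each $D_j$ is either kept in $\cC$ or swallowed by some $B \in \cB \subset \cC$, giving (1). For (2), using $r(B)^s < M(B) = \sum_{D_j \subseteq B} r(D_j)^s$ for each $B \in \cB$ together with disjointness of $\cB$, one obtains $\sum_{D \in \cC} r(D)^s \le \sum_j r(D_j)^s \le \e/2 < \e$. For (3), fix $D \in \cD_{2^{-l}}$. If $D \subset B$ for some $B \in \cB$, then no strict dyadic subcube of $D$ lies in $\cC$ (the original $D_j \subsetneq D$ were replaced by $B$, and no element of $\cB$ can lie strictly inside $D \subset B$ since $\cB$ is disjoint), so the condition is vacuous. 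Otherwise $D$ is not itself bad (else it would be contained in a maximal bad cube, contradicting the assumption), so $M(D) \le r(D)^s$, and splitting the cubes of $\cC$ inside $D$ into untouched $D_j$'s and whole $B$'s with $B \subseteq D$ yields
$$\sum_{D' \in \cC,\, D' \subset D} r(D')^s \;\le\; M(D) \;\le\; r(D)^s.$$
Restricting to scale $2^{-k}$ gives $\#\{D' \in \cC_{2^{-k}} : D' \subset D\} \le 2^{(k-l)s}$, which is exactly (3).

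The conceptual point is choosing ``bad'' so that collapsing the many small $D_j$'s inside a bad cube to the single cube $B$ simultaneously decreases the total $s$-mass (via $r(B)^s < M(B)$) and installs the Frostman-type density condition; the rest is a bookkeeping argument resting on the identity $M(D) = \sum_{\text{(a)}} r(D_j)^s + \sum_{\text{(b)}} M(B)$ for $B \subseteq D$, $B \in \cB$. The only thing that could go wrong, a runaway stopping time, is prevented for free by the scale cap $r(D) < (\e/2)^{1/s}$ on bad cubes.
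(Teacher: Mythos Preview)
Your proof is correct. The stopping-time regularization via maximal ``bad'' cubes (those with $M(D)>r(D)^s$) is exactly the right mechanism: collapsing all $D_j$ inside a maximal bad cube to that cube simultaneously preserves covering, does not increase the $s$-mass (since $r(B)^s<M(B)$), and forces the Frostman bound $M(D)\le r(D)^s$ on every cube not swallowed, which is precisely condition (3) after restricting to a single scale. The case split in your verification of (3) is clean, and the observation that bad cubes have side length bounded by $(\e/2)^{1/s}$ guarantees both that maximal bad cubes exist and that all cubes in $\cC$ lie at some scale $2^{-k}$ with $k>0$.

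The paper does not give its own proof; it simply cites \cite{GGM} Lemma 2. Your argument is the standard one and almost certainly matches what is in that reference. One tiny point worth making explicit for a polished write-up: you should state that the raw cover $\{D_j\}$ can be taken to consist of cubes of side length $<1$ (immediate from $\sum_j r(D_j)^s\le \e/2<1$), and you are implicitly using that in Case B, any $B\in\cB$ with $B\cap D\ne\emptyset$ and $D\not\subseteq B$ must satisfy $B\subseteq D$ by dyadic nesting, so that all $D_j\subseteq D$ lying inside some $B$ are accounted for in the sum $\sum_{B\subseteq D}M(B)$. You gesture at this with the identity $M(D)=\sum_{(a)}r(D_j)^s+\sum_{(b)}M(B)$ at the end, but spelling it out in the body would remove any doubt.
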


\begin{proof}
See \cite{GGM} Lemma 2.
\end{proof}

\begin{remark}
{\rm 
Besides $[0,1]^m$, this Lemma also works for other compact metric spaces, for example $\S^n$ and $G(m,n)$, which we will use throughout the rest of the paper.
}
\end{remark}

\begin{lemma}\label{usefullemma2}
Suppose $X\subset [0,1]^m$. Then there exist dyadic cubes $$\cC=\bigsqcup_{k=0}^{\log_2\de^{-1}} \cC_{2^{-k}}$$ (with $\cC_{2^{-k}}\subset \cD_{2^{-k}}$) that cover $X$ and
\begin{enumerate}
    \item $\sum_{D\in\cC}r(D)^s= \cH^s_{\de,\infty} (X)$,
    \item $\cC_{2^{-k}}$ satisfies the $s$-dimensional condition: For $l<k$ and any $D\in \cD_{2^{-l}}$, we have $\#\{D'\in\cC_{2^{-k}}: D'\subset D\}\le 2^{(k-l)s}$. In particular, $\cH^s_{2^{-k},\infty}(\cup_{D\in\cC_{2^{-k}}} D)=\#\cC_{2^{-k}} 2^{-ks}$.
\end{enumerate}
\end{lemma}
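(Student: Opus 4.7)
The plan is to exhibit an explicit minimizer of the functional $\sum_{D\in\mathcal{C}}r(D)^s$ over all dyadic coverings at scales $\ge\delta$, and then read the $s$-dimensional condition off of its optimality. The key observation is that since $X\subset[0,1]^m$ and the allowed cube sides lie in the discrete range $\{\delta,2\delta,\ldots,1\}$, only finitely many dyadic cubes are ever available as building blocks. Consequently, the infimum defining $\mathcal{H}^s_{\delta,\infty}(X)$ ranges over a \emph{finite} family of candidate coverings, so it is attained. I would pick any such minimizer $\mathcal{C}=\bigsqcup_{k=0}^{\log_2\delta^{-1}}\mathcal{C}_{2^{-k}}$; this immediately gives part (1).

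For part (2), I would use an exchange argument. Suppose, toward a contradiction, that for some $l<k$ there is a parent cube $D\in\mathcal{D}_{2^{-l}}$ with $N:=\#\{D'\in\mathcal{C}_{2^{-k}}:D'\subset D\}>2^{(k-l)s}$. Construct $\mathcal{C}'$ by deleting those $N$ small cubes from $\mathcal{C}_{2^{-k}}$ and adjoining $D$ to $\mathcal{C}_{2^{-l}}$. Then $\mathcal{C}'$ still covers $X$, since $D$ subsumes every deleted cube, while the change in cost is at most
\[
2^{-ls}-N\cdot 2^{-ks}<2^{-ls}-2^{(k-l)s}\cdot 2^{-ks}=0,
\]
contradicting the optimality of $\mathcal{C}$. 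The corner case $D\in\mathcal{C}_{2^{-l}}$ only makes the decrease larger.

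For the ``In particular'' clause, I would match upper and lower bounds for $\mathcal{H}^s_{2^{-k},\infty}\bigl(\bigcup_{D\in\mathcal{C}_{2^{-k}}}D\bigr)$. The upper bound $\le \#\mathcal{C}_{2^{-k}}\cdot 2^{-ks}$ is trivial by using $\mathcal{C}_{2^{-k}}$ itself. For the matching lower bound, given any competing dyadic covering $\mathcal{C}^*$ by cubes of side $\ge 2^{-k}$, dyadic nesting forces each $D'\in\mathcal{C}_{2^{-k}}$ to sit inside a unique $D^*\in\mathcal{C}^*$; grouping by $D^*$ and invoking the $s$-dimensional condition (each $D^*\in\mathcal{D}_{2^{-l}}$ absorbs at most $2^{(k-l)s}$ cubes of $\mathcal{C}_{2^{-k}}$) yields $\sum_{D^*}r(D^*)^s\ge \#\mathcal{C}_{2^{-k}}\cdot 2^{-ks}$. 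I expect no genuine obstacle here: the lemma is a finite combinatorial optimization on the dyadic tree, and the main thing to watch is the bookkeeping in the exchange step, which goes through in both the new-ancestor and already-present-ancestor cases.
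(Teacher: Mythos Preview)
Your proposal is correct and follows exactly the approach the paper sketches: the paper's proof simply says ``choose $\mathcal{C}$ to be the covering that attains the inf in the definition of $\mathcal{H}^s_{\delta,\infty}(X)$; it is not hard to check the two properties are satisfied,'' and you have supplied precisely those checks (finiteness of candidate covers for existence, the exchange argument for the $s$-dimensional condition, and the matching bounds for the ``In particular'' clause). One tiny wording issue: in the lower-bound step the containing cube $D^*$ need not be unique, but choosing any one (say the minimal one) suffices for the count.
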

\begin{proof}
This lemma looks like Lemma \ref{usefullemma}, but it is much easier since we only care about the scales $\ge \de$. We just choose $\cC$ to be the covering that attain the ``inf" in the definition of $\cH^s_{\de,\infty}(X)$. It is not hard to check the two properties are satisfied.
\end{proof}

The next lemma is \cite{fassler2014restricted} \textup{Proposition A.1}. Though it is stated for $\cH^s_\infty$ there, the proof also works for $\cH^s_{\de,\infty}$.
\begin{lemma}\label{usefullemma2.5}
Suppose $X\subset[0,1]^m$, with $\cH^s_{\de,\infty}(X)=\kappa>0$. Then there exists a $(\de,s)$-subset of $X$ with cardinality $\gtrsim \kappa \de^{-s}$.
\end{lemma}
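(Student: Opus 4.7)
The plan is to follow closely the argument of Proposition A.1 in \cite{fassler2014restricted}, adapted to the $\delta$-truncated content $\cH^s_{\de,\infty}$ in place of $\cH^s_\infty$. Since the only difference between the two contents is that the dyadic cubes in the covering of $X$ must have side length $\ge \de$, and since the conclusion only requires a $(\de,s)$-subset (which is already only sensitive to scales $\ge \de$), the Fassler-Orponen proof transports directly.

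The first step is to apply Lemma \ref{usefullemma2} to $X$ to obtain an optimal covering $\cC = \bigsqcup_{k=0}^{\log_2 \de^{-1}} \cC_{2^{-k}}$ of $X$ with $\sum_{D\in\cC} r(D)^s = \kappa$, where each $\cC_{2^{-k}}$ satisfies the $s$-dimensional condition at scale $2^{-k}$. The second step is to replace each dyadic cube $D$ of side $2^{-k} > \de$ appearing in $\cC$ by a single $\de$-subcube $D' \subset D$ containing a point of $X$, weighted by $2^{-ks}/\de^s$ — i.e., conceptually assign to $D'$ a multiplicity equal to $(2^{-k}/\de)^s$. This reduction step is valid because, combined with the $s$-dimensional condition from Lemma \ref{usefullemma2}(2), the resulting weighted collection of $\de$-cubes satisfies the $(\de,s)$-condition with a uniform constant independent of $\de$ and $\kappa$.

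Concretely, for any ball $B_r$ with $r \ge \de$, the number of $\de$-cubes counted with weight that lie in $B_r$ is bounded, scale-by-scale, by $\sum_{2^{-k} \ge r} (2^{-k}/\de)^s \cdot (\text{const})+ \sum_{\de \le 2^{-k} < r}(r/2^{-k})^s(2^{-k}/\de)^s \lesssim (r/\de)^s$, using the $s$-dimensional condition at each scale and summing the geometric series. Therefore the selected $\de$-cubes form a $(\de,s)$-set in the weighted sense, and their total weighted cardinality equals $\sum_D r(D)^s/\de^s = \kappa\de^{-s}$. Unweighting by splitting each $\de$-cube into the appropriate number of copies (which is fine since duplicate $\de$-cubes only improve the $(\de,s)$ condition after picking distinct representatives) and picking one point of $X$ from each yields the desired $(\de,s)$-subset of $X$ with cardinality $\gtrsim \kappa\de^{-s}$.

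The main obstacle is the scale-by-scale bookkeeping in the second step: one must verify that the $s$-dimensional condition at each intermediate scale $2^{-k}$, provided by Lemma \ref{usefullemma2}(2), combines across all scales into a single $(\de,s)$-condition with no logarithmic loss. As indicated above, this hinges on the geometric summation $\sum_{k} (r/2^{-k})^s (2^{-k}/\de)^s$ being controlled by $(r/\de)^s$, which uses crucially that the exponent $s$ appears in both factors and that the condition holds uniformly at every scale.
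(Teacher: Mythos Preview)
Your proposal has a genuine gap in the second step. The weighted collection of singled-out $\de$-subcubes is \emph{not} a $(\de,s)$-set in general, and the unweighting step cannot be carried out.

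Take the extreme case $\cC = \{[0,1]^m\}$, so $\kappa = 1$. You select a single $\de$-subcube $D'$ with weight $\de^{-s}$; for the ball $B_\de$ around $D'$ the weighted count is $\de^{-s}$, which violates the $(\de,s)$-bound (which requires $O(1)$). In general, your first sum $\sum_{2^{-k}\ge r}(2^{-k}/\de)^s\cdot(\text{const})$ can be as large as $\de^{-s}$ (the $k=0$ term alone gives this), not $(r/\de)^s$: Lemma~\ref{usefullemma2}(2) bounds how many cubes of $\cC_{2^{-k}}$ lie inside a larger dyadic cube, but gives no control on where inside a large $D$ the chosen $\de$-subcube $D'$ lands, so nothing prevents $D'$ from falling into $B_r$. (Your second sum also picks up a $\log(r/\de)$ factor, since every term equals $(r/\de)^s$.) The unweighting step then fails outright: a single $\de$-cube with multiplicity $M$ yields at most one $\de$-separated point of $X$. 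If instead you mean to select $M$ distinct $\de$-subcubes inside the parent cube $D$, you lose the guarantee that they meet $X$.

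The missing ingredient is the \emph{optimality} of $\cC$: for each $D\in\cC$ of side $2^{-k}$ one has $\cH^s_{\de,\infty}(X\cap D)=2^{-ks}$ (otherwise replacing $D$ by a cheaper cover of $X\cap D$ would beat the infimum), so $X\cap D$ itself has large $\de$-truncated content. This is what allows one to find, inside each $D$, a genuinely well-spread $(\de,s)$-subset of $X\cap D$ of size $\gtrsim(2^{-k}/\de)^s$ --- via induction on the number of dyadic scales, or equivalently via the Frostman-measure construction --- and then assemble these across $D\in\cC$ using the $s$-dimensional condition. The paper does not spell this out; it cites \cite{fassler2014restricted}~Proposition~A.1 and notes that the proof there carries over to $\cH^s_{\de,\infty}$. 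But that proof is not the weighted single-subcube shortcut you sketch.
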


We also have the following lemma saying that the lemma above can be reversed.
\begin{lemma}\label{usefullemma3}
Suppose $X\subset [0,1]^m$ is a $(\de,s)$-set with $\#X\ge \kappa \de^{-s}$. Then, $\cH^s_{\de,\infty}(X)\gtrsim \kappa$. In particular, by Lemma \ref{usefullemma2.5}, this implies that for any $\de\le \De\le 1$, $X$ contains a subset $X'$ which is a $(\De,s)$-set and satisfies $\#X'\gtrsim \kappa \De^{-s}$;
and also implies that for any $u\le s$, $X$ contains a subset $X'$ which is a $(\De,u)$-set and satisfies $\#X'\gtrsim \kappa \De^{-u}$.
\end{lemma}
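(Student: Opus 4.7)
The plan is to prove the main inequality $\cH^s_{\de,\infty}(X)\gtrsim \kappa$ by a direct counting argument, and then to derive the two ``in particular'' consequences by combining monotonicity properties of the $\cH_{\de,\infty}$-quantity with Lemma \ref{usefullemma2.5}.

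For the main inequality I would fix an arbitrary admissible covering $X\subset \bigcup_j D_j$ by dyadic cubes with $r(D_j)\ge \de$ and bound $\#X$ from above using the growth condition. Since each $D_j$ of side length $r(D_j)$ sits inside a ball of comparable radius, the $(\de,s)$-set hypothesis applied to that ball gives $\#(X\cap D_j)\lesssim (r(D_j)/\de)^s$. Summing over $j$ and using the lower bound $\#X\ge \kappa\de^{-s}$,
\[ \kappa\de^{-s}\le \#X \le \sum_j \#(X\cap D_j) \lesssim \de^{-s}\sum_j r(D_j)^s, \]
so $\sum_j r(D_j)^s\gtrsim \kappa$. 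Taking the infimum over all such coverings yields $\cH^s_{\de,\infty}(X)\gtrsim \kappa$.

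For the two ``in particular'' statements I would use two simple monotonicities. First, if $\De\ge \de$ then any covering of $X$ by dyadic cubes of side length $\ge \De$ is, a fortiori, a covering by cubes of side length $\ge \de$, so $\cH^s_{\De,\infty}(X)\ge \cH^s_{\de,\infty}(X)\gtrsim \kappa$; invoking Lemma \ref{usefullemma2.5} at scale $\De$ with exponent $s$ extracts a $(\De,s)$-subset of $X$ of cardinality $\gtrsim \kappa\De^{-s}$. Second, since the cubes entering the infimum may be taken to have side length at most $1$ and $u\le s$, we have $r(D)^u\ge r(D)^s$ termwise, giving $\cH^u_{\De,\infty}(X)\ge \cH^s_{\De,\infty}(X)\gtrsim \kappa$; applying Lemma \ref{usefullemma2.5} now with exponent $u$ produces a $(\De,u)$-subset of cardinality $\gtrsim \kappa\De^{-u}$.

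There is no serious obstacle: the heart of the argument is pigeonhole. The only point to verify is that a dyadic cube of side length $r$ sits inside a ball of radius $O(r)$, with constant depending only on the ambient dimension $m$, so that the $(\de,s)$-condition, which is stated for balls, applies to cubes at the cost of a harmless dimensional constant.
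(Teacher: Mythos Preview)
Your proof is correct and follows essentially the same counting argument as the paper: bound $\#X$ from above via the $(\de,s)$-condition applied cube-by-cube in an admissible covering, and conclude $\sum_j r(D_j)^s\gtrsim \kappa$. Your treatment of the two ``in particular'' statements via the monotonicities $\cH^s_{\De,\infty}\ge \cH^s_{\de,\infty}$ and $\cH^u_{\De,\infty}\ge \cH^s_{\De,\infty}$ is more explicit than the paper's, which simply invokes Lemma~\ref{usefullemma2.5} without spelling these out.
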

\begin{proof}
Assuming our $(\de,s)$-set $X$ satisfies $\# X\ge\kappa\de^{-s}$, we are going to show $\cH^s_{\de,\infty}(X)\gtrsim \kappa$.
Let $\cC$ be the covering of $X$ that attains ``inf" in the definition of $\cH^s_{\de,\infty}(X)$. Also let $\cC_\De\subset \cC$ be the set of $\De$-cubes. We write $X=\bigsqcup_\De X_\De$, where $X_\De$ is the points in $X$ covered $\cC_\De$. By the definition of $(\de,s)$-set, each $\De$-cube contains $\lesssim (\frac{\De}{\de})^s$ many points from $X_\De$. We have $\#\cC_\De\gtrsim(\frac{\de}{\De})^s \#X_\De$. We see that
$$ \cH^s_{\de,\infty}(X)=\sum_{\De\ge\de} \De^{s}\#\cC_\De\gtrsim \de^s\#X=\kappa. $$
\end{proof}

\begin{remark}
    {\rm
    We see that when $X$ is a $(\de,s)$-set, then $\#X\gtrsim \de^{-s+\e}$ and $\cH^s_{\de,\infty}(X)\gtrsim \de^\e$ are equivalent.
    }
\end{remark}

We recall Theorem \ref{radfalconer} here.
\begin{theorem}\label{thmradial}
Let $A\subset \R^n$ be a Borel set such that $\alpha = \dim A\in (k,k+1]$ for some $k\in\{1,\dots,n-1\}$. Fix $0<s<k$ and let 
\[
E_s(A) := \{y\in \R^n \setminus A\mid \dim (\pi_y(A)) < s\}.
\]
Then, 
\[
\dim(E_s(A))\leq \max\{k+s -\alpha,0\}.
\]
\end{theorem}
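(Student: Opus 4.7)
The plan is to argue by contradiction in a $\delta$-discretized setting. Assume $\dim E_s(A) > t_0 := \max\{k+s-\alpha,0\}$ and fix $t \in (t_0, \dim E_s(A))$. Using Frostman's lemma (Lemma \ref{frostmans}) at scale $\delta$, I would extract a $(\delta,\alpha)$-set $F \subset A$ with $\# F \gtrsim \delta^{-\alpha+\eta}$ and a $(\delta,t)$-set $E \subset E_s(A)$ with $\# E \gtrsim \delta^{-t+\eta}$ (for a small $\eta>0$ that can be sent to $0$). By a standard pigeonholing on a covering of the unit ball by balls of radius $1/8$, one may further assume that $F$ and $E$ each lie in such a ball with $\dist(E,F) \geq 1/2$, putting us in the geometric setup of the "$\delta$-tube and $\delta$-slab" subsection.

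Next, for each $y \in E$, the hypothesis $\dim \pi_y(A) < s$ together with Lemma \ref{usefullemma} applied to $\pi_y(A) \subset \S^{n-1}$ produces a dyadic covering of $\pi_y(A)$ satisfying the $s$-dimensional non-concentration condition. Pulling this covering back through $\pi_y$ gives a $(\delta,s)$-bush $\T^y$ of $\delta$-tubes through $y$ with $F \subset \bigcup_{T\in \T^y} T$; in particular $\# \T^y \lesssim \delta^{-s+\eta}$, and for every $\delta \le r \le 1$ any $r$-tube through $y$ contains at most $(r/\delta)^{s+\eta}$ tubes of $\T^y$.

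The main step is then an incidence/$L^2$ estimate. Each pair $(y,x) \in E \times F$ determines a tube $T = T_{y,x} \in \T^y$ containing $x$, so
\[
\sum_{y\in E}\sum_{T\in\T^y}\#(T\cap F) \;\gtrsim\; \# E\cdot \# F \;\gtrsim\; \delta^{-(t+\alpha)+2\eta}.
\]
The goal is to upper-bound the left-hand side by $\delta^{-(k+s)+O(\eta)}$, which forces $t+\alpha \leq k+s+O(\eta)$ and contradicts $t > t_0$. For the upper bound, I would exploit the essential disjointness of \emph{truncated} tubes within a single bush $\wt{\T}^y$ (as flagged in the preliminaries) to evaluate the $L^2$ norm $\int(\sum_y\sum_{T\in\wt\T^y}\mathbf 1_T)^2$, so that cross-terms are reduced to counting pairs of tubes from different bushes that meet in a common $\delta$-neighborhood. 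The condition $\alpha>k$ enters here: one organizes tubes via the $k$-dimensional $\delta$-slabs through each $y$, using the bush non-concentration to bound the tubes of $\T^y$ inside any $k$-slab, and the $(\delta,\alpha)$-non-concentration of $F$ to bound $\#(W\cap F)$ for a $k$-slab $W$.

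The hard part, I expect, is decoupling the two possible modes of concentration: $E$ may collapse into a low-dimensional affine subspace (demanding a Kaufman-type directional count to extract enough transversality), and $F$ may concentrate inside a single $k$-slab (where one seemingly loses the gain from $\alpha > k$ unless one passes to a residual transverse distribution or induces on $k$). This dichotomy is almost certainly why the theorem is split into Propositions \ref{1prop} and \ref{2prop}: one proposition presumably treats the "structured/slab-concentrated" case (plausibly by induction and the Kaufman-type bullet from the introduction), while the other treats the "generic/spread-out" case by the $L^2$ incidence computation sketched above. Combining the two via a multiscale pigeonhole on dyadic $\Delta \in [\delta,1]$, together with Lemmas \ref{usefullemma2}--\ref{usefullemma3} to translate between $\cH^s_{\delta,\infty}$ bounds and $(\delta,s)$-set cardinalities, yields the desired bound $\dim E_s(A) \leq t_0$.
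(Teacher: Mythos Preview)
Your overall contradiction framework and the passage to a $\delta$-discretized problem are correct in spirit, but you have misidentified the roles of Propositions \ref{1prop} and \ref{2prop}, and the key structural idea of the proof is therefore missing from your sketch.

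The two propositions do \emph{not} correspond to a ``structured/slab-concentrated'' versus ``generic/spread-out'' dichotomy. Proposition \ref{2prop} is exactly Theorem \ref{discradial} in the special case $k=n-1$ (i.e.\ $a\in(n-1,n]$), proved by a high--low Fourier/$L^2$ argument on the bushes $\T_{y}$. Proposition \ref{1prop} is not about radial projections at all: it is a quantitative Marstrand theorem for \emph{orthogonal} projections $\pi_V$ onto generic $V\in G(k+1,n)$, guaranteeing $\cH^a_{\delta,\infty}(\pi_V(F'))>\delta^\eta$ for every large $F'\subset F$. The proof of Theorem \ref{discradial} combines them by \emph{dimension reduction}: use Proposition \ref{1prop} to find a $(k{+}1)$-plane $V$ onto which both $E$ and $F$ project with essentially full $\cH^t_{\delta,\infty}$ and $\cH^a_{\delta,\infty}$ content; inside $V\cong\R^{k+1}$ one is now in the top-dimensional regime $a\in(k,k+1]$, and Proposition \ref{2prop} applies to $\pi_V(E),\pi_V(F)$. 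The elementary inequality $\cH^\sigma_{\delta,\infty}(\pi_y(F'))\ge \cH^\sigma_{\delta,\infty}\bigl(\pi_{\pi_V(y)}(\pi_V(F'))\bigr)$ transfers the conclusion back to $\R^n$.

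Your proposed direct $L^2$ computation, organizing tubes by $k$-dimensional slabs to exploit $\alpha>k$, runs into precisely the obstruction you label the ``hard part'': when $k<n-1$ there is no clean overlap bound for the dual objects analogous to Lemma \ref{lemgrass}, and the two concentration modes you describe are not decoupled by a multiscale pigeonhole alone. The paper never attempts this; projecting to a $(k{+}1)$-plane is what makes the $L^2$/high--low argument go through, since inside $\R^{k+1}$ the dual slabs have codimension one and Lemma \ref{lemgrass} supplies the overlap estimate.

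A secondary gap: your separation step (``standard pigeonholing on balls of radius $1/8$'') does not work as written, because $E_s(A)$ may accumulate on $A$. The paper instead chooses two well-separated $\alpha_1$-dense points $x_1,x_2$ of $A$; any ball $B_{1/1000}$ is then far from at least one of $A\cap B_{1/1000}(x_i)$, and one bounds $E_s(A)\cap B_{1/1000}$ using that piece.
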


We will actually prove the following $\de$-discretized version which is a generalization of \cite[Proposition 4.2]{orponenshmerkin2022exceptional}.

\begin{theorem}\label{discradial}
Let $0<\si <k$, $a\in (k,k+1]$ for some $k\in\{1,\dots,n-1\}$ and $t>\max\{k+\si - a, 0\}$. Let $\eta\in (0,1/10)$. Then for $\e$ and $\de$ small enough depending on $\eta,\si,a$, and $t$, we have the following result.  

Let $E,F\subset B^n(0,1)$ be a $(\de,t)$-set and a $(\de,a)$-set respectively, with $\#E \gtrsim \de^{-t+\e}$, $\#F \gtrsim \de^{-a+\e}$. We also assume: each of $E$ and $F$ lies in a ball of radius $1/1000$ and $\dist(E,F)\ge 3/4$.
Then, there exists $y\in E$ such that for all $F'\subset F$ with $\mathcal \# F' \geq \de^{\e}\#F$, we have
\[
\cH^\si_{\de,\infty}(\pi_y(F'))>\de^\eta.
\]
\end{theorem}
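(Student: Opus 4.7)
The plan is to argue by contradiction: suppose that for every $y\in E$ there is a refinement $F_y'\subset F$ with $\#F_y'\ge \de^\e \#F$ and $\cH^\si_{\de,\infty}(\pi_y(F_y'))\le \de^\eta$. Applying Lemma \ref{usefullemma2} to the covering of $\pi_y(F_y')$ which realizes the $\si$-content, one gets a multi-scale collection of dyadic caps in $\S^{n-1}$ satisfying the $\si$-dimensional condition, with total weight $\le \de^\eta$. Pulling back via $\pi_y$ gives a multi-scale bush $\cT^y$ of tubes through $y$ that covers $F_y'$ and has small total $\si$-weight. After standard pigeonholing in the scale and in the angular parameter, one reduces to a single-scale bush of $\de$-tubes that, up to a $\de^\eta$-loss, is a genuine $(\de,\si)$-bush centered at $y$.

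Next, pass to an incidence count. Truncate each $\cT^y$ to $\wt\cT^y$ by removing $B_{1/2}(y)$ so that the tubes inside a single bush are essentially disjoint. The total incidence
\[ I := \sum_{y\in E}\#F_y' \gtrsim \de^{\e}\,\#E\cdot\#F \]
must be matched from above by the total number of tube-point incidences between $\bigsqcup_y\wt\cT^y$ and $F$. Expanding $\int_{\R^n}\bigl(\sum_{y\in E}\sum_{T\in \wt\cT^y}\mathbf 1_T\bigr)^2$ and applying Cauchy--Schwarz reduces matters to the following geometric question: for two centers $y\ne y'$ in $E$ and a common $F$-point $x$ hit by a tube of each bush, how much can the configuration be concentrated?

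The key geometric input is a Furstenberg/plate dichotomy applied at every intermediate scale $\De\in [\de,1]$: at scale $\De$, either many centers of $E$ line up with a given $F$-point along a common $\De$-tube (violating the $(\de,t)$-condition on $E$), or many $F$-points pile into a single $k$-dimensional $\De$-slab through $y$ (violating the $(\de,a)$-condition on $F$, since $a>k$). The balance of the two alternatives is governed by the hypothesis $t>k+\si-a$, i.e.\ $a+t>k+\si$, which is precisely what forces the two estimates to be incompatible. This is where the auxiliary Proposition \ref{1prop} and Proposition \ref{2prop} enter the argument: the first bounds the contribution of the collinear-centers alternative by an $L^2$-estimate exploiting the $(\de,t)$-structure of $E$, while the second handles the slab-concentration alternative by slicing and inducting on $k$ using the $(\de,a)$-structure of $F$.

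The main obstacle I anticipate is the higher-dimensional plate estimate for intermediate $k<n-1$: unlike the $n=2$ case of Orponen--Shmerkin, where one may invoke Fu--Ren directly, in $n\ge 3$ one must iterate a slicing procedure to reduce the $k$-dimensional slab concentration to a controllable one-dimensional base case, and this iteration must be carried out uniformly across scales $\De\in[\de,1]$. Once this estimate is established, one applies it at each dyadic $\De$, sums the resulting incidence bounds across scales (the logarithmic loss being absorbed by choosing $\e,\de$ small compared with $\eta,\si,a,t$), and obtains an upper bound on $I$ strictly smaller than the lower bound above, producing the desired contradiction and hence the theorem.
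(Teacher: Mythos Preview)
Your proposal misidentifies the roles of Proposition~\ref{1prop} and Proposition~\ref{2prop}, and as a result the outlined strategy does not match the paper and is not, as written, a workable proof. Proposition~\ref{1prop} is not an $L^2$ bound for ``collinear centers'' in a radial-projection incidence count; it is a quantitative Marstrand-type statement for \emph{orthogonal} projections onto $(k+1)$-planes $V\in G(k+1,n)$, saying that for a full-dimensional set of $V$'s the projection $\pi_V(F')$ retains $\cH^a_{\de,\infty}$-content. Proposition~\ref{2prop} is not a tool for a ``slab-concentration alternative''; it is precisely Theorem~\ref{discradial} in the special case $k=n-1$, proved by the high-low/$L^2$ method you sketch in your steps (2)--(4).

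The paper's proof of Theorem~\ref{discradial} is then a short reduction, not an incidence dichotomy in $\R^n$: using Proposition~\ref{1prop} twice (once for $F$ with exponent $a$, once for $E$ with exponent $t$), choose a $(k+1)$-plane $V$ so that $\pi_V(E)$ contains a $(\de,t)$-set of the right size and every large refinement $F'\subset F$ has $\cH^a_{\de,\infty}(\pi_V(F'))>\de^{\e'}$; the separation $\dist(E,F)\ge 3/4$ guarantees $\dist(\pi_V(E),\pi_V(F))\ge 1/2$ for a positive-measure set of $V$. Now apply Proposition~\ref{2prop} inside $V\cong\R^{k+1}$ (where the hypothesis $a\in(k,k+1]$ becomes the top-codimension case $a\in(n'-1,n']$ with $n'=k+1$) to find $\wt y\in\pi_V(E)$ with $\cH^\si_{\de,\infty}(\pi_{\wt y}(\pi_V(F')))>\de^\eta$ for all large $F'$. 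Finally lift: any $y\in E$ with $\pi_V(y)=\wt y$ works, because radial projection in $\R^n$ followed by $\pi_V$ dominates radial projection in $V$, so $\cH^\si_{\de,\infty}(\pi_y(F'))\ge \cH^\si_{\de,\infty}(\pi_{\wt y}(\pi_V(F')))$.

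The ``Furstenberg/plate dichotomy at every scale $\De$'' and the ``slicing iteration on $k$'' you propose are not needed and are not substantiated in your outline; in particular, there is no clear mechanism in your sketch by which the parameter $k$ (as opposed to $n-1$) enters the $L^2$ bush estimate. The correct entry point for $k$ is the dimension-reduction step above: project to a generic $(k+1)$-plane and run the codimension-one argument there.
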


We first show that Theorem \ref{discradial} implies Theorem \ref{thmradial}.

\begin{proof}[Proof that Theorem \ref{discradial} implies Theorem \ref{thmradial}]\label{pf}
We first do a reduction to localize $A$. For $\alpha_1<\alpha$, we say $x\in A$ is an $\alpha_1$-dense point of $A$ if $\dim(A\cap B_{r}(x))\ge \alpha_1$ for any $r>0$.
We notice a fact: for $\alpha_1<\alpha$, $A$ has infinitly many $\alpha_1$-dense points; otherwise, $A$ can be covered by a finite set and countable union of sets with dimension less than $\alpha_1$, which contradicts $\dim A=\alpha$.

Fix $\alpha_1<\alpha$ that is sufficiently close to $\alpha$ (we will later let $\alpha_1\rightarrow \alpha$). We can find $\alpha_1$-dense points $x_1,x_2$ of $A$. Since our problem is scaling-invariant, we can assume $|x_1-x_2|=99/100$. We let $A_1=A\cap B_{1/1000}(x_1)$, $A_2=A\cap B_{1/1000}(x_2)$, and then $\dim(A_1),\dim(A_2)\ge \alpha_1$. We only need to show for any ball $B_{1/1000}$ of radius $1/1000$, $E_s(A)\cap B_{1/1000}$ has dimension $\le\max\{k+s-a,0\}$. Since $\dist(A_1,A_2)>98/100$, we have either $\dist(B_{1/1000},A_1)>3/4$ or $\dist(B_{1/1000},A_2)>3/4$. We may assume $\dist(B_{1/1000},A_1)>3/4$. It suffices to show that the set
$$ E':=E_s(A_1)\cap B_{1/1000}=\{y\in B_{1/1000}:\dim(\pi_y(A_1))<s\} $$
has dimension $\le \max\{k+s-\dim(A_1),0\}$.
From the reduction, these sets satisfy certain separation properties: 
\begin{align}
   \textup{each ~of~} A_1 \textup{~and~} E' \textup{~lies~in~some~ball~of~radius~} 1/1000,\\  A_1,E'\subset B^n(0,1),\ \  \dist(A_1,E')\ge 3/4.
\end{align}
 (We remark that the numerology about the radii of balls or the distance between sets are not important. For example, we only need $A_1, E'$ to be contained in a ball of bounded radius and the distance between $A_1$ and $E'$ are bigger than some nonzero constant.)

We choose $t<\dim(E'),a<\dim(A_1)$. Then $\cH^t_{\infty}(E')>0$, and by Frostman's lemma there exists a probability measure $\nu_{A_1}$ supported on $A_1$ satisfying $\nu_{A_1}(B_r)\lesssim r^a$ for any $B_r$ being a ball of radius $r$. We only need to prove $t\le \max\{k+s-\alpha_1,0\}$, since then we can send $a\rightarrow \dim(A_1),t\rightarrow \dim(E')$. For the sake of contradiction, assume that $t> \max\{k+s-a,0\}$. Thus, we can find $\si>s$ so that $t> \max\{k+\si-a,0\}$. Set $\eta=\si-s>0$.
Now we fix $a,t$, so we may assume $\cH^t_{\infty}(E')\sim 1$ is a constant. 

Fix a $y\in E'$.
applying Lemma \ref{usefullemma} to $\pi_y(A_1)$, we obtain a set of dyadic caps $\cC_y=\bigsqcup_j \cC_{y,j}$ in $\S^{n-1}$ that cover $\pi_y(A_1)$. Here each $\cC_{y,j}$ is a set of $2^{-j}$-caps that satisfy the $s$-dimensional condition (see Lemma \ref{usefullemma} (3)) as $\dim (\pi_y(A_1))<s$. Also, the radius of these caps is less than $\e_\circ$ which is any given small number. 

By the $s$-dimensional condition of $\cC_{y,j}$, we have
$$ \cH^s_{2^{-j},\infty}\left(\bigcup_{C\in\cC_{y,j}}C\right)=\# \cC_{y,j} 2^{-js}\le 1. $$
Therefore, we have
\begin{equation}\label{radcontr}
    \cH^\si_{2^{-j},\infty}\left(\bigcup_{C\in\cC_{y,j}}C\right)\le \#\cC_{y,j}2^{-j\si}\le 2^{-j\eta}.
\end{equation}

\begin{figure}
\begin{tikzpicture}[x=0.75pt,y=0.75pt,yscale=-1,xscale=1]

\draw   (242,73) .. controls (242,52.01) and (277.82,35) .. (322,35) .. controls (366.18,35) and (402,52.01) .. (402,73) .. controls (402,93.99) and (366.18,111) .. (322,111) .. controls (277.82,111) and (242,93.99) .. (242,73) -- cycle ;
\draw   (243,192) .. controls (243,171.01) and (278.82,154) .. (323,154) .. controls (367.18,154) and (403,171.01) .. (403,192) .. controls (403,212.99) and (367.18,230) .. (323,230) .. controls (278.82,230) and (243,212.99) .. (243,192) -- cycle ;
\draw  [dash pattern={on 4.5pt off 4.5pt}] (255,73) .. controls (255,59.19) and (266.19,48) .. (280,48) .. controls (293.81,48) and (305,59.19) .. (305,73) .. controls (305,86.81) and (293.81,98) .. (280,98) .. controls (266.19,98) and (255,86.81) .. (255,73) -- cycle ;
\draw   (274,98.33) -- (286,98.33) -- (286,240) -- (274,240) -- cycle ;
\draw   (342.08,227.02) -- (330.8,231.46) -- (286,98.33) -- (297.28,93.9) -- cycle ;
\draw   (280,98.33) -- (291.82,96.08) -- (314.57,235.59) -- (302.75,237.85) -- cycle ;

\foreach \Point in {(278,73), (280,213.5), (305,215.5), (280,188.5), (300,189.5), (327.5,204)}{
    \node at \Point {\textbullet};
}

\draw (408,47.4) node [anchor=north west][inner sep=0.75pt]    {$E'$};
\draw (409,200.4) node [anchor=north west][inner sep=0.75pt]    {$A_1$};
\draw (282,67.4) node [anchor=north west][inner sep=0.75pt]    {$y$};

\end{tikzpicture}
\caption{$\T_{y,j}$ in the radial projection}
\label{radialproj}
\end{figure}
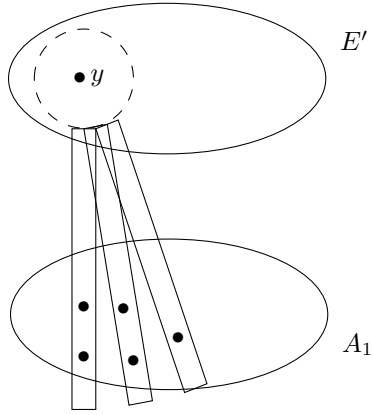

For each cap $C\in\cC_y$, consider $\pi_y^{-1}(C)\cap \{x\in\R^n:1-\frac{1}{100}\le|x-y|\le 1\}$ which is a tube. We obtain a collection of finitely overlapping tubes
$$ \T_y=\bigsqcup_j \T_{y,j} $$
that cover $A_1$ (see Figure \ref{radialproj}). This is a truncated bush centered at $y$.
Here, each tube has its coreline passing through $y$ and at distance $\sim 1$ from $y$. The tubes in $\T_{y,j}$ have dimensions $\sim 2^{-j}\times\dots\times2^{-j}\times 1$.

For this fixed $y\in E'$, there exists a $j(y)\ge |\log_2\e_\circ|$ such that 
\begin{equation}\label{radpigeon1}
    \nu_{A_1}\left(A_1\cap \bigcup_{T\in\T_{y,j(y)}}T\right)\ge \frac{1}{10j(y)^2}\nu_{A_1}(A_1)=\frac{1}{10j(y)^2}.
\end{equation}
We have a partition $E'=\bigsqcup_j E'_j$ where
$E'_j=\{y\in E': j(y)=j\}.$ We choose $j$ such that $\cH^t_\infty(E'_j)\gtrsim \frac{1}{j^2}$.
We let $\de=2^{-j}$. Note that $\de \leq \e_\circ$ by assumption. By Lemma \ref{frostmans}, there exists a subset $E''\subset E'_j$ which is a $(\de,t)$-set and $\# E''\gtrsim |\log\de|^{-2}\de^{-t}$. We use $\mu$ to denote the counting measure on $E''$.

Next, we consider the set $S=\{(y,x)\in E''\times A_1: x\in\bigcup_{T\in\T_{y,j}}T\}$. We also denote the $y$-section and $x$-section of $S$ by $S^y$ and $S_x$. (In Figure \ref{radialproj}, $E'$ is drawn above $A_1$, so we let $y$ be the superscript in $S^y$). By \eqref{radpigeon1}, we have $\nu_{A_1}(S^y)\ge \frac{1}{10j(y)^2}$, so we have
\begin{equation}\label{radpigeon3}
    (\mu\times\nu_{A_1})(S)\ge \frac{1}{10j^2}\mu(E'').
\end{equation}
This implies 
\begin{equation}\label{radpigeon4}
    (\mu\times\nu_{A_1})\bigg(\Big\{(y,x)\in S: \mu (S_x)\ge \frac{1}{20j^2}\mu(E'')\Big\} \bigg)\ge \frac{1}{20j^2}\mu(E'').
\end{equation}
Therefore, we have
\begin{equation}\label{radpigeon5}
    \nu_{A_1}\bigg(\Big\{x\in A_1: \mu (S_x)\ge \frac{1}{20j^2}\mu(E'')\Big\} \bigg)\ge \frac{1}{20j^2}\sim |\log\de|^{-2}.
\end{equation}
By Lemma \ref{frostmans3}, we can find a subset $F$ of $\Big\{x\in A_1: \mu (S_x)\ge \frac{1}{20j^2}\mu(E'')\Big\}$, so that $F$ is a $(\de,a)$-set and $\#F\gtrsim |\log\de|^{-2}\de^{-a}$.

Hence,
\begin{equation}
   |\log\de|^{-2}\#F\#E'' \lesssim \sum_{x\in F} \#\left\{ y\in E'': x\in \bigcup_{T\in\T_{y,j}} T \right\}=\sum_{y\in E''}\#\left\{x\in F: x\in \bigcup_{T\in\T_{y,j}} T \right\}.
\end{equation}
By pigeonholing, there exists a subset $E\subset E''$ with $\#E\gtrsim |\log\de|^{-2}\#E''\gtrsim \de^{\e/2} \de^{-t}$, so that for any $y\in E$:
$$ \#\{x\in F: x\in \bigcup_{T\in\T_{y,j}} T \}\gtrsim \de^{\e/2}\# F\ge \de^\e\#F, $$
when $\de$ is small enough.

We set $F_y:=\{x\in F: x\in \bigcup_{T\in\T_{y,j}} T \}$.
Now we use Theorem \ref{discradial} to derive a contradiction. Since $E$ is a $(\de,t)$-set with $\#E\gtrsim \de^{\e}\de^{-t}$ and $F$ is a $(\de,a)$-set with $\#F\gtrsim \de^{-a+\e}$,
Theorem \ref{discradial} yields the existence of an $y\in E$ such that $\cH^\si_{\de,\infty}(\pi_y(F_y))>\de^\eta$. This contradicts \eqref{radcontr}.
\end{proof}

Before proving Theorem \ref{discradial}, we prove two propositions. Then we show Theorem \ref{discradial} is a result of them. The first proposition is a quantitative version of Marstrand's projection theorem. The second proposition is a special case of Theorem \ref{discradial} when $k=n-1$.

\begin{proposition}\label{1prop}
Set $d_{m,n}=m(n-m)=\dim (G(m,n))$. Let $0<a<m$. Let $\eta\in (0,1/10)$. Then for $\e$ and $\de$ small enough depending on $\eta,a$, we have the following result.  

Let $F\subset B^n(0,1)$ be a $(\de,a)$-set with $\#F\gtrsim \de^\e\de^{-a}$.
Let $G\subset G(m,n)$ be a $(\de,d_{m,n})$-set, with $\#G\gtrsim \de^\e\de^{-d_{m,n}}$.
Then, there exists $V\in G$ such that for all $F'\subset F$ with $\mathcal \# F' \geq \de^{\e}\#F$, we have
\begin{equation*}
    \mathcal H^a_{\de,\infty}(\pi_V(F')) > \de^{\eta}.
\end{equation*}

Here, $\pi_V$ is the orthogonal projection onto $V$. Actually, by iteration, there exists $G_1\subset G$ with $G_1\ge 1/2 \#G$ such that any $V\in G_1$ satisfies the property above. The idea is to construct $G_1=\{V_1,\dots,V_N\}$ inductively and replace $G$ by $G\setminus G_1$, and check whether $\#(G\setminus G_1)\gtrsim \de^\e \de^{-d_{m,n}}$, and then repeat again.
\end{proposition}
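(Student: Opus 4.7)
The plan is to argue by contradiction. Suppose the conclusion fails, so that for every $V \in G$ there is a subset $F_V \subset F$ with $\#F_V \ge \de^\e \#F$ and $\cH^a_{\de,\infty}(\pi_V(F_V)) \le \de^\eta$. Applying Lemma \ref{usefullemma2} to each $\pi_V(F_V)$ yields a dyadic cube covering at scales $2^{-k} \in [\de, 1]$ whose total $a$-mass is at most $\de^\eta$. Pigeonholing first over the $O(|\log\de|)$ scales and then over $V \in G$, I pass to a single common scale $\De^* \in [\de,1]$ and a subset $G^* \subset G$ with $\#G^* \gtrsim \#G/|\log\de|$, such that for every $V \in G^*$ there is a further subset $F_V^* \subset F$ of size $\gtrsim \de^\e \#F / |\log\de|$ whose image $\pi_V(F_V^*)$ meets at most $N_V \le \de^\eta (\De^*)^{-a}$ cubes of side length $\De^*$.

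The core of the argument will be a two-sided bound on
\[
S := \sum_{V \in G^*} \#\bigl\{(x,y) \in F \times F : |\pi_V(x) - \pi_V(y)| \le c\De^*\bigr\},
\]
for a dimensional constant $c$. For the lower bound, distributing the $\#F_V^*$ points of $F_V^*$ into at most $N_V$ cubes and applying Cauchy--Schwarz yields at least $(\#F_V^*)^2/N_V \gtrsim \de^{-\eta}(\De^*)^a (\#F_V^*)^2$ pairs whose $\pi_V$-images lie in a common $\De^*$-cube; any such pair has $|\pi_V(x) - \pi_V(y)| \le c\De^*$, so summing over $V \in G^*$ gives
\[
S \gtrsim |\log\de|^{-3}\, \de^{2\e - \eta}\, (\De^*)^a\, \#F^2\, \#G.
\]

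For the matching upper bound, I will reverse the order of summation and invoke the Grassmannian projection estimate $|\{V \in G(m,n) : |\pi_V(w)| \le r\}| \lesssim (r/|w|)^a$ valid for $a < m$, which follows from finiteness of the energy integral $\int_{G(m,n)} |\pi_V(w)|^{-a}\, dV \lesssim |w|^{-a}$ via Chebyshev. Its discretized version
\[
\#\{V \in G : |\pi_V(w)| \le r\} \lesssim (r/|w|)^a\, \de^{-\e}\, \#G \qquad (r,|w| \ge \de)
\]
is obtained by covering the continuous sub-level set by $\de$-balls in $G(m,n)$ and using that $G$ is a $(\de, d_{m,n})$-set with $\#G \gtrsim \de^\e \de^{-d_{m,n}}$, so that each $\de$-ball contains $O(1)$ points of $G$. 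Combining with the dyadic estimate $\sum_{y \in F,\, y \ne x} |x-y|^{-a} \lesssim \de^{-a} |\log\de|$, a direct consequence of $F$ being a $(\de,a)$-set, I obtain
\[
S \lesssim \de^{-2\e}\, |\log\de|\, (\De^*)^a\, \#F^2\, \#G.
\]

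Comparing the two bounds and cancelling the common factors of $\#G$, $\#F^2$, and $(\De^*)^a$ leaves $\de^{4\e - \eta} \lesssim |\log\de|^4$, which fails for $\de$ small whenever $\e < \eta/4$; this contradiction produces the desired $V \in G$. The final statement with $\#G_1 \ge \frac{1}{2}\#G$ then follows by iteration: having found a good $V$, remove it from $G$, note that the remainder is still a $(\de, d_{m,n})$-set (ball conditions pass to subsets), and apply the single-point statement again so long as the remaining cardinality stays above the threshold $\de^\e \de^{-d_{m,n}}$. I expect the main obstacle to be the discretized projection estimate, where the near-maximality hypothesis on $\#G$ is essential for converting the Lebesgue volume bound into a counting bound; the rest amounts to arranging the pigeonholing so that the various $|\log\de|$ and $\de^{O(\e)}$ losses are absorbed by a single choice $\e \ll \eta$.
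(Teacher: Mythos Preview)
Your argument is correct and takes a genuinely different route from the paper. The paper proves Proposition~\ref{1prop} via the high--low Fourier method: after the same pigeonholing to a single scale $\De$, it builds $f=\sum_{V\in G'}\sum_{T\in\T_{V,\De}}\psi_T$ from smooth bump functions on $\De$-planks, shows the low-frequency part is dominated using the $a$-dimensional spacing of $\T_{V,\De}$ together with $a<m$, and then controls $\int|f_{\textup{high}}|^2$ by Plancherel and a geometric overlap bound (Lemma~\ref{lemgrass}) for dual slabs outside a ball on the frequency side.

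Your approach is instead the classical energy/Kaufman method in discretized form: count ordered pairs $(x,y)\in F^2$ with $|\pi_V(x)-\pi_V(y)|\le c\De^*$, obtain the lower bound by Cauchy--Schwarz from the concentration of $\pi_V(F_V^*)$ in $\le \de^\eta(\De^*)^{-a}$ cubes, and the upper bound from the standard sublevel estimate $|\{V:|\pi_V(w)|\le r\}|\lesssim (r/|w|)^a$ (valid since $a<m$) combined with the $(\de,a)$ energy bound $\sum_{y\ne x}|x-y|^{-a}\lesssim |\log\de|\,\de^{-a}$. The discretization of the sublevel estimate is fine in the relevant regime $r/|w|\gtrsim\de$, since $\{V:|\pi_V(e)|\le\rho\}$ is a $\rho$-neighborhood of a codimension-$m$ submanifold of $G(m,n)$ and hence has $\de$-covering number $\lesssim\rho^m\de^{-d_{m,n}}\le\rho^a\de^{-d_{m,n}}$ for $\de\le\rho\le1$.

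What each approach buys: your argument is shorter and entirely elementary, avoiding Fourier analysis and the slab-overlap lemma. The paper's high--low scheme, on the other hand, is reused essentially verbatim in Proposition~\ref{2prop} for radial projections, where tubes from different bushes interact in a way not captured by a single Grassmannian sublevel estimate; so the paper is developing one template for both propositions rather than optimizing the orthogonal-projection case in isolation.
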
 

\begin{proof}
Suppose the result is not true. By contradiction, for any $V\in G$, there exists $F_V\subset F$ with $\#F_V\ge \de^\e\#F$ and
\begin{equation}\label{spcing}
    \cH^a_{\de,\infty}(\pi_V(F_V))\le \de^\eta.
\end{equation} 
By the definition of $\cH^a_{\de,\infty}$, we can find a covering of $\pi_V(F_V)$ by dyadic cubes $\{D\}$ so that $\cH^a_{\de,\infty}(\pi_V(F_V))=\sum_D r(D)^a$.
Consider $\{\pi_V^{-1}(D)\cap B^n(0,1)\}$ which are the preimages of these $\{D\}$ under $\pi_V$ truncated in the unit ball.
They actually form a covering of $F_V$:
$$ F_V\subset \bigsqcup_{\de\le \Delta\le 1}\bigcup_{ T\in\T_{V,\Delta}}T. $$
Here, each $\T_{V,\Delta}$ consists of planks of dimensions
$\underbrace{\De \times \De \times \dots \times \De}_{m \text{ times}} \times \underbrace{1\times 1\times \dots \times 1}_{n-m \text{ times}}
$ that are orthogonal to $V$. By Lemma \ref{usefullemma2}, $\T_{V,\De}$ satisfies the $a$-dimensional spacing condition (inherited from $\{D\})$: For $\De\le r\le 1$, if $T_{r}$ is a plank of dimensions $\underbrace{r \times r \times \dots \times r}_{m \text{ times}} \times \underbrace{1\times 1\times \dots \times 1}_{n-m \text{ times}}
$ that is orthogonal to $V$, then $T_{r}$ contains $\lesssim (r/\De)^a$ many planks from $\T_{V,\De}$.
Also by \eqref{spcing},
\begin{equation}\label{radnote}
     \#\T_{V,\De}\lesssim  \de^\eta\De^{-a}.
\end{equation}
We see that $\T_{V,\De}$ is non-empty only for $\De\le \de^{\eta/a}$.

Next, we will apply a standard pigeonhole argument to find a scale $\De$. Note that
\[F_V\subset \bigsqcup_{\de\le\De\le \de^{\eta/a}} \bigcup_{T\in\T_{V,\De}}T.\]
For each $V\in G$, we can find a dyadic $\De(V)\in [\de,\de^{\eta/a}]$ so that
\begin{equation}\label{pig0}
    \#(F_V\cap \bigcup_{T\in\T_{V,\De(V)}}T)\gtrsim |\log\de|^{-1} \#F_V\gtrsim \de^{\e}\#F. 
\end{equation} 
Define $G_\De=\{V\in G: \De(V)=\De\}$. We see that
\[ G=\bigsqcup_{\de\le \De\le \de^{\eta/a}}G_\De. \]
By pigeonholing again, we can find a scale $\De$, such that 
\begin{equation}\label{pig2}
    \#G_\De\gtrsim \de^\e\#G.
\end{equation} 
We fix this $\De$. Noting that $G$ is a $(\de,d_{m,n})$-set with $\#G\gtrsim \de^{\e}\de^{-d_{m,n}}$, we have that $G_\De$ is also a $(\de,d_{m,n})$-set with $\#G_\De \gtrsim \de^{2\e}\de^{-d_{m,n}}$. By Lemma \ref{usefullemma3}, we can find a subset $G'$ of $G_\De$ so that $G'$ is a $(\De,d_{m,n})$-set with $\#G'\gtrsim \de^{2\e}\De^{-d_{m,n}}$. From \eqref{pig0}, we have for any $V\in G'$ that
\begin{equation}\label{pig1}
    \#(F\cap \bigcup_{T\in \T_{V,\De}}T)\gtrsim \de^{\e}\#F\gtrsim \de^{2\e-a} .
\end{equation}

Next, we consider the set
\[ S:=\{ (x,V)\in F\times G': x\in \bigcup_{T\in \T_{V,\De}}T \}. \]
Define the sections of $S$:
\[ S_x:=\{V\in G': (x,V)\in S\},\ \ \ S_V:=\{x\in F: (x,V)\in S\}. \]
By \eqref{pig1}, we have $\#S_V\gtrsim \de^{\e}\#F$ for $V\in G'$. Then we have
\begin{equation}\label{pig3}
    \#S=\sum_{V\in G'} \#S_V\ge C^{-1} \de^{\e} \#G'\#F.
\end{equation}
Since
\[ \#\{(x,V)\in S: \#S_x\le  (2C)^{-1}\de^{2\e}\#G' \}\le (2C)^{-1} \de^{2\e} \#G'\#F\le \frac{1}{2} \#S,  \]
we have
\[ \#\{(x,V)\in S: \#S_x\ge  (2C)^{-1}\de^{2\e}\#G' \}\gtrsim \de^{\e} \#G'\#F. \]
The inequality above implies
\[ \#\{ x\in F: \#S_x \ge  (2C)^{-1}\de^{2\e}\#G' \}\gtrsim \de^{\e} \#F. \]
We define
\begin{equation}\label{deffdelta}
     F_\De:= \{ x\in F: \#S_x \ge  (2C)^{-1}\de^{2\e}\#G' \}.
\end{equation}
Noting that $F_\De$ is a $(\de,a)$-set with $\#F_\De\gtrsim \de^{2\e-a}$, by Lemma \ref{usefullemma3}, we can find a subset $F'\subset F_\De$ such that $F'$ is a $(\De,a)$-set with
\begin{equation}
    \#F'\gtrsim \de^{2\e}\De^{-a}.
\end{equation}

Let us summarize what we obtained.
We find a scale $\De\in[\de,\de^{\eta/a}]$, a $(\De,d_{m,n})$-set $G'\subset G$ with $\#G'\gtrsim \de^{2\e}\De^{-d_{m,n}}$, and a $(\De,a)$-set $F'\subset F$ with $\#F'\gtrsim \de^{2\e}\De^{-a}$, so that
\begin{enumerate}[label=(\roman*)]
    \item for each $V\in G'$, we have a set of tubes $\T_{V,\De}$ that satisfy the $a$-dimensional spacing condition and $\#\T_{V,\De}\lesssim \de^\eta\De^{-a}$ (see paragraph before \eqref{radnote}),
    \item each $x\in F'$ is contained in $\gtrsim \de^{2\e}\#G'\gtrsim \de^{4\e}\De^{-d_{m,n}}$ planks from $\bigcup_{V\in G'}\T_{V,\De}$ (see \eqref{deffdelta}).
\end{enumerate}
In the rest of the proof, we fix $\De$ and simply write $\T_{V,\De}$ as $\T_V$.

For each $V\in G'$, let $D_V$ be a
\[
\underbrace{\De^{-1} \times \De^{-1} \times \dots \times \De^{-1}}_{m \text{ times}} \times \underbrace{1\times 1\times \dots \times 1}_{n-m \text{ times}}
\]
slab centered at the origin such that the $1\times 1\times \dots \times 1$-side is orthogonal to $V$. Then, $D_V$ is the dual rectangle of the slabs in  $\T_V$.

For all $T \in \T_V$, choose a smooth bump function $\psi_{T}$ adapted to $T$ such that $\psi_{T}\geq 1$ on $T$, $\psi_{T}$ decays rapidly outside of $T$, and $\supp ~\wh \psi_{T}\subset D_V$.

Define 
\[
f_V = \sum_{T\in \T_V} \psi_{T} \hspace{.25cm}\text{and}\hspace{.25cm} f = \sum_{V\in G'} f_V.
\]
Then by the condition (ii) above, for $x\in N_{\De}(F')$, we have: 
\[f(x) \gtrsim \de^{4\e}\De^{-d_{m,n}}.\]
So, 
\begin{equation}\label{tdimlower}
     \int_{N_\De(F')} |f|^2\gtrsim  \de^{O(\e)}\De^n\De^{-a-2d_{m,n}}.
\end{equation}

We are going to find an upper bound of $\int_{N_{\De}(F')} |f|^2$ using the high-low method. Let $K$ be a large number to be determined later (we will actually choose $K\sim \de^{-O(\e)}$). Let $\eta_{\textup{low}}(\xi)$ be a smooth bump function on $B^n(0,(K\De)^{-1})$ and $\eta_{\textup{high}}(\xi) = 1-\eta_{\textup{low}}(\xi)$. We have the following high-low decomposition for $f$:
\[
f=f_{\textup{low}}+f_{\textup{high}},
\]
where $\wh f_{\textup{low}}=\eta_{\textup{low}}\wh f $ and $\wh f_{\textup{high}}=\eta_{\textup{high}}\wh f$. See Figure \ref{dualtubes} for a diagram of the high part and low part and the dual slabs.

\begin{figure}
\begin{tikzpicture}
\draw[black] (1.5,0,0) -- ++(-3,0,0) -- ++(0,-.5,0) -- ++(3,0,0) -- cycle;
\draw[black] (1.5,0,0) -- ++(0,0,-3) -- ++(0,-.5,0) -- ++(0,0,3) -- cycle;
\draw[black] (1.5,0,0) -- ++(-3,0,0) -- ++(0,0,-3) -- ++(3,0,0) -- cycle;

\draw[black] (.25,1.25,0) --
++(-.5,0,0) -- ++(0,-3,0) -- ++ (.5,0,0) --  cycle;
\draw[black] (.25,1.25,0) --
++(0,0,-3) -- ++(0,-3,0) -- ++(0,0,3) -- cycle;
\draw[black] (.25,1.25,0) --
++(-.5,0,0) -- ++(0,0,-3) -- ++(.5,0,0) -- cycle;

\foreach \Point in {(0,-.25,-1.5)}{
    \node[blue] at \Point {\textbullet};
}

\shade[ball color = blue!40, opacity = 0.3] (0,-.25,-1.5) circle (1 cm);
  \draw[blue] (0,-.25,-1.5) circle (1 cm);
\end{tikzpicture}
\caption{Dual Slabs}
\label{dualtubes}
\end{figure}
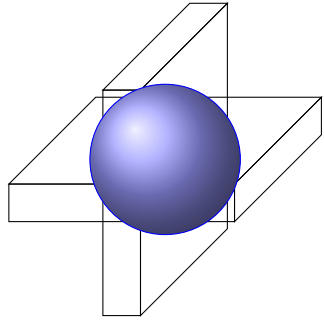

For $x\in N_{\De}(F')$, we have
\begin{equation}\label{hardhighlow}
    \de^{4\e}\De^{-d_{m,n}}  \lesssim f(x)\le |f_{\textup{high}}(x)|+|f_{\textup{low}}(x)|.
\end{equation}
We will show that the high part dominates for $x\in N_{\De}(F')$, i.e., $|f_{\textup{high}}(x)|\gtrsim \de^{4\e}\De^{-d_{m,n}}$.
It suffices to show 
\begin{equation}\label{pointwise}
    |f_{\textup{low}}(x)|\le C^{-1}\de^{4\e}\De^{-d_{m,n}},
\end{equation}
for a large constant $C$.

Recall that $f_{\textup{low}}=\sum_{V\in G'} f_V\ast\eta_{\textup{low}}^\vee$. Since $\eta_{\textup{low}}$ is a bump function on $B^n (0,(K\De)^{-1})$, we see that $\eta_{\textup{low}}^\vee$ is an $L^1$-normalized bump function essentially supported in $B^n(0,K\De)$. Let $\chi(x)$ be a positive function $=1$ on $B^n(0,K\De)$ and decays rapidly outside $B^n(0,K\De)$. We have
$$ |\eta_{\textup{low}}^\vee|\lesssim \frac{1}{|B^n(0,K\De)|}\chi. $$
Therefore,
\begin{equation}\label{bumpest}
    |f_{\textup{low}}(x)|\lesssim \sum_{V\in G'}\sum_{T\in\T_V} \psi_T * \frac{1}{|B^n(0,K\De)|}\chi(x)\lesssim \sum_{V\in G'}\sum_{T\in\T_V} K^{-m}\chi_{T_K}(x).
\end{equation}
Here, each $T_K$ is a plank of dimensions \[\underbrace{K\De \times K\De \times \dots \times K\De}_{m \text{ times}} \times \underbrace{1\times 1\times \dots \times 1}_{n-m \text{ times}}\]
which is the $K$-thickening of the $\De\times \dots \times \De$-side of $T$, and $\chi_{T_K}$ is a bump function $=1$ on $T_K$ and decays rapidly outside $T_K$. The rapidly decaying tail is negligible, so we can think of each $\chi_{T_K}$ as the indicator function of $T_K$. For a fixed $V\in G'$, we note that $\{T:T\in\T_V\}$ are orthogonal to $V$. Therefore, if we let $P_{K\De}$ be a plank of dimensions
\[\underbrace{K\De \times K\De \times \dots \times K\De}_{m \text{ times}} \times \underbrace{1\times 1\times \dots \times 1}_{n-m \text{ times}}\]
that is orthogonal to $V$ and contains $x$, then by condition (i),
$$ \sum_{T\in\T_V}\chi_{T_K}(x)\lesssim \#\{T\in\T_V: T\subset P_{K\De}\}\lesssim K^a, $$
where the last inequality is by the $a$-dimensional condition of $\T_V$.
Plugging this back into \eqref{bumpest}, we obtain
$$ |f_{\textup{low}}(x)|\lesssim K^{a-m} \#G'\lesssim K^{a-m}\De^{-d_{m,n}}. $$
($G'$ is a $(\De,d_{m,n})$-set, so $\#G'\lesssim \De^{-d_{m,n}}$.)

Noting that $a<m$, we may choose $K\sim_{a,m} \de^{-O_{a,m}(\e)}$ so that \eqref{pointwise} holds.
Plugging back to \eqref{tdimlower}, we have
$$ \De^{n-a-2d_{m,n}}\lessapprox \int |f_{\textup{high}}|^2 = \int \left|\sum_{V\in G'} \wh{f_V} \eta_{\textup{high}}\right|^2. $$
Here, $A\lessapprox B$ means $A\lesssim \de^{-O(\e)}B$. It is good to mention that since $\De\le \de^{\eta/a}$, by choosing $\e$ small enough depending on $\eta, a$, we have that $K$ is much smaller than $\De^{-1}$.

We use the following lemma to estimate the overlap of $\{\supp(\wh{f_V}\eta_{\textup{high}})\}_{V\in G'}$, or more precisely $\{D_V\setminus B^n(0,(K\De)^{-1})\}_{V\in G'}$. After rescaling $x\mapsto \De x$, each $D_V$ becomes a $1\times \dots\times 1\times \De\times\dots\times \De$-plank with $m $ many $1$'s and $(n-m)$ many $\De$'s in the expression. We denote this rescaled plank by $P_V$. We can see that $P_V$ is morally $N_\De(V)\cap B^n(0,1)$. It is harmless to just assume 
\[P_V=N_\De(V)\cap B^n(0,1).\]
We also see that after rescaling, $D_V\setminus B^n(0,(K\De)^{-1})$ becomes $P_V\setminus B^n(0,K^{-1})$. We will bound the overlaps of $\{P_V\setminus B^n(0,K^{-1})\}$ where $\{P_V\}$ are essentially distinct.

\begin{lemma}\label{lemgrass}
$\{P_V\setminus B^n(0,K^{-1})\}_{V\in G'}$ is $\lesssim K^{O(1)} \De^{-\dim (G(m-1,n-1))}$-overlapping.
\end{lemma}
\begin{proof}
We will estimate the number of overlaps at the point $\boldsymbol{\xi}_0=(0,\dots, 0 ,\lambda)$ with $\lambda\in[K^{-1},1]$. We just need to show that the number of planks $P_V$ that pass through $0$ and  $\boldsymbol{\xi}_0$ is $\lesssim K^{O(1)} \De^{-\dim (G(m-1,n-1))}$.

We first talk about some properties for the smooth manifold $G(m,n)$. For $V_1, V_2\in G(m,n)$, define $d(V_1,V_2)=\|\pi_{V_1}-\pi_{V_2}\|$. Then $d(\cdot,\cdot)$ gives a metric on $G(m,n)$. We need another characterization for this distance. Define $\rho(V_1,V_2)$ to be the smallest number $\rho$ such that $B^n(0,1)\cap V_1\subset N_{\rho}(B^n(0,1)\cap V_2)$. We claim that $\rho(V_1,V_2)\sim d(V_1,V_2)$. Suppose $B^n(0,1)\cap V_1\subset N_{\rho}(B^n(0,1)\cap V_2)$, then for any $v\in\R^n$ we have
\[ |\pi_{V_1}(v)-\pi_{V_2}(v)|\lesssim \rho |v|, \]
which implies $d(V_1,V_2)\lesssim \rho$. On the other hand, if for any $|v|\le 1$ we have 
\[|\pi_{V_1}(v)-\pi_{V_2}(v)|\le d|v|, \]
then we obtain that $\pi_{V_1}(v)\subset N_{Cd}(B^n(0,1)\cap V_2)$. Letting $v$ range over the unit ball in $V_1$, we get $B^n(0,1)\cap V_1\subset N_{Cd}(B^n(0,1)\cap V_2)$.

Consider the $\wt G=\{W\in G(m,n): 0,\boldsymbol{\xi}_0\in W\}$ which is a submanifold of $G(m,n)$. $\wt G$ is the set of $m$-subspaces that contain the $n$-th axis. Actually, it is not hard to see that $\wt G$ is isomorphic to $G(m-1,n-1)$. We return back to $P_V=N_\De(V)\cap B^n(0,1)$.
We make the following geometric observation: if $\boldsymbol{\xi}_0\subset P_V$, then there exists $W\in \wt G$ so that $W\cap B^n(0,1)\subset N_{CK\De}(V)\cap B^n(0,1)$. Recall the length of $\boldsymbol{\xi}_0$ is $\lambda\in [K^{-1},1]$, so the angle between $\boldsymbol{\xi}_0$ and $V$ is $\lesssim \De K$. Therefore the unit vector $\lambda^{-1} \boldsymbol{\xi}_0$ is contained in $N_{CK\De}(V)\cap B^n(0,1)$. It suffices to find an $m$-dimensional space $W$ such that $\lambda^{-1}\boldsymbol{\xi}_0\in W$ and $W\cap B^n(0,1)\subset N_{10CK\De}(V)\cap B^n(0,1)$. Let $v$ be the projection of $\lambda^{-1} \boldsymbol{\xi}_0$ onto $V$, then the angle between $v$ and $\lambda ^{-1}\boldsymbol{\xi}_0$ is $\lesssim \De K$. Imagine we choose a family of vectors $v(\theta), \theta\in [0,\De K]$ so that $v(0)=v$, $v(\De K)=\lambda^{-1} \boldsymbol{\xi}_0$ and also $|v(\theta_1)-v(\theta_2)|\lesssim |\theta_1-\theta_2|.$ Actually, we can choose them so that $v(\theta)$ lies on the line segment connecting $v(0)$ and $v(\De K)$.  Starting with $\theta=0$, we choose the $m$-dimensional space $V(0)=V$ so that $v(0)\in V(0)$. When $\theta$ changes we get a family of $m$-dimensional subspaces $V(\theta)$ by rotating $V(0)=V$ so that $v(\theta)\in V(\theta)$. When $\theta$ changes from $0$ to $\De K$, we see we rotate $V$ to another space $W=V(\De K)$ within angle $\lesssim \De K$. Therefore we find the $W$.

We proved that there exists $W\in \wt G$ so that $W\cap B^n(0,1)\subset N_{CK\De}(V)\cap B^n(0,1)$. By the comparability of the metric discussed in the previous two paragraphs, we see that $d(V,\wt G)\lesssim \De K$. In other word, those $V\in G(m,n)$ satisfying $\boldsymbol{\xi}_0\in P_V$ is contained in the $C\De K$-neighborhood of $\wt G$ in $G(m,n)$. We denote this neighborhood by $N_{C\De K}(\wt G)$.  
Noting that $\wt G$ is submanifold of dimension $\dim(G(m-1,n-1))=(m-1)(n-m)$ and $G'$ is a $\De$-separated subset of $G(m,n)$, we get the number of overlaps of $G'$ at $\boldsymbol{\xi}_0$ is
\[  \lesssim \textup{measure}\big(N_{C\De K}(\wt G)\big) /\De^{\dim(G(m,n))}\sim K^{O(1)}\De^{-\dim(G(m-1,n-1))}. \]
Note that we use a simple fact: If $\wt M$ is an $m$-dimensional smooth submanifold of  the $n$-dimensional manifold $M$, then 
\[\textup{measure}\big( N_r(\wt M) \big)\lesssim_{M,\wt M} r^{n-m}, \]
for $0\le r\le 1$.
\end{proof}

We are now able to find an upper bound to the high part of the integral. We have
\begin{align*}
    \De^{n-a-2d_{m,n}}\lessapprox \int |f_{\textup{high}}|^2= \int |\wh{f}_{\textup{high}}|^2 
    \lessapprox  \De^{-\dim (G(m-1,n-1))}\sum_{V\in G'} \int |\eta_{\textup{high}}\wh{f}_V|^2 
\end{align*}
by Lemma \ref{lemgrass}. Since $|\eta_{\textup{high}}|\lesssim 1$ and the planks in $\T_V$ (for a fixed $V$) are essentially disjoint, we have

\begin{align*}
    \int |\eta_{\textup{high}}\wh{f}_V|^2\lesssim \sum_{V\in G'} \int |f_V|^2
    \lesssim \sum_{V\in G'} \sum_{T\in \T_V} \int|\psi_{T}|^2 
    \le (\#G')(\#\T_V)\De^m 
     \lesssim \de^\eta\De^{-d_{m,n}-a+m}.
\end{align*}
Here we have a factor $\de^\eta$ because of the upper bound \eqref{radnote}. And we remark that $\de^\eta$ is quite important to get a contradiction.

Combining everything and noting that $\dim (G(m-1,n-1))=(m-1)(n-m)$, we have that 
\[
   1 \lessapprox  \de^\eta.
\]
Unwrapping the notation, we get
\[ 1\lesssim \de^{-O(\e)+\eta}. \]
This is impossible if we choose $\de, \e$ small enough depending on $\eta$. We get a contradiction.

\end{proof}



\begin{proposition}\label{2prop}
Let $0<\si <n-1$, $a\in (n-1,n]$ and $t>\max\{n-1+\si - a, 0\}$. Let $0<\eta<1$. Then for $\e$ and $\de$ small enough depending on $\si,t$, and $\eta$, we have the following result.  

Let $E,F\subset B^n(0,1)$ so that $E$ is a $(\de,t)$-set with $\#E\gtrsim \de^\e \de^{-t}$ and $F$ satisfies $\cH^a_{\de,\infty}(F)\gtrsim \de^\e$, $\#F\lesssim \de^{-a}$. (We remark that we did not assume $F$ is $\de$-separated.) We also assume: each of $E$ and $F$ lies in a ball of radius $1/1000$ and $\dist(E,F)\ge 1/2$.
Then, there exists $y\in E$ such that for all $F'\subset F$ with $\cH^a_{\de,\infty}(F')\ge \de^{\e}$, we have
\[
\mathcal H^\si_{\de,\infty}(\pi_y(F')) > \de^{\eta}.
\]
\end{proposition}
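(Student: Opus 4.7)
The plan is to adapt the high-low Fourier argument of Proposition \ref{1prop}, replacing the Grassmannian family of orthogonal projections $\{\pi_V\}_{V \in G'}$ by the family of truncated radial bushes $\{\wt\T_y\}_{y \in E'}$ of $\de$-tubes through the points of $E'$, and then running the same contradiction scheme. I would begin by negating the conclusion: suppose that for every $y \in E$ there is a subset $F_y \subset F$ with $\cH^a_{\de,\infty}(F_y) \ge \de^\e$ and $\cH^\sigma_{\de,\infty}(\pi_y(F_y)) \le \de^\eta$. Applying Lemma \ref{usefullemma2} to $\pi_y(F_y) \subset \S^{n-1}$ produces a dyadic cap-covering with total $\sigma$-cost $\le \de^\eta$ and satisfying the $\sigma$-dimensional spacing at every scale. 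Pulling each cap back through $\pi_y$ and truncating to $\{|z-y| \ge 1/2\}$ (legitimate since $\dist(E,F) \ge 1/2$) yields, for each dyadic scale $\Delta \in [\de, \de^{\eta/\sigma}]$, a truncated bush $\wt\T_{y,\Delta}$ of essentially disjoint $\Delta$-tubes through $y$ with $\#\wt\T_{y,\Delta} \lesssim \de^\eta \Delta^{-\sigma}$ and inheriting the $\sigma$-dimensional spacing on $\S^{n-1}$. A pigeonhole in the scale $\Delta$ singles out a single value for which a substantial proportion of $y \in E$ has its bush $\wt\T_{y,\Delta}$ covering an $\cH^a_{\de,\infty}$-substantial portion of $F_y$; Lemma \ref{usefullemma3} then upgrades this set of $y$'s to a $(\Delta, t)$-set $E' \subset E$ with $\#E' \gtrsim \de^{O(\e)} \Delta^{-t}$. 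A further incidence pigeonhole, exactly as in the proof that Theorem \ref{discradial} implies Theorem \ref{thmradial}, extracts a $(\Delta, a)$-subset $F'' \subset F$ with $\#F'' \gtrsim \de^{O(\e)} \Delta^{-a}$ such that every $x \in F''$ is contained in $\gtrsim \de^{O(\e)} \#E'$ tubes from $\bigsqcup_{y \in E'}\wt\T_{y,\Delta}$.

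Next, I would set up the high-low integral as in Proposition \ref{1prop}: let $\psi_T \ge 1_T$ be a Schwartz bump with Fourier transform supported in the dual slab $D_T$ (of dimensions $\Delta^{-1} \times \cdots \times \Delta^{-1} \times 1$, which depends only on the tube direction $\omega_T$ and is, in essence, the $1$-thickening of the hyperplane $\omega_T^\perp \cap B(0,\Delta^{-1})$), set $f = \sum_{y \in E'}\sum_{T \in \wt\T_{y,\Delta}} \psi_T$, and split $f = f_{\text{low}} + f_{\text{high}}$ at Fourier frequency $(K\Delta)^{-1}$ with $K = \de^{-O(\e)}$. On $N_\Delta(F'')$ one has the pointwise bound $f(x) \gtrsim \de^{O(\e)} \#E'$, giving $\int_{N_\Delta(F'')}|f|^2 \gtrsim \de^{O(\e)}(\#E')^2\Delta^{n-a}$. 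The low-part estimate is handled exactly as in Proposition \ref{1prop}: the $\sigma$-dimensional spacing of each bush together with $\sigma < n-1$ forces $|f_{\text{low}}| \lesssim K^{\sigma-(n-1)} \#E'$, which is dominated by the lower bound once $K$ is chosen a small enough negative power of $\de$. Consequently $\int|f_{\text{high}}|^2 \gtrsim \de^{O(\e)}(\#E')^2 \Delta^{n-a}$.

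The core of the proof, and the main obstacle, is the matching $L^2$ upper bound for $f_{\text{high}}$. Writing $\int|f_{\text{high}}|^2 \le M\sum_{y,T}\int|\psi_T|^2$ where $M$ is the maximum multiplicity of the Fourier cutoffs $D_T \setminus B(0, (K\Delta)^{-1})$, the target is the radial analog of Lemma \ref{lemgrass}: $M \lesssim K^{O(1)} \Delta^{-(n-2)}$. Since $D_T$ depends only on $\omega_T \in \S^{n-1}$, after rescaling Fourier space by $\Delta$ the event $\xi_0 \in D_{\omega_T}$ with $|\xi_0| \ge K^{-1}$ becomes the statement that $\omega_T$ lies in the $(\Delta K)$-neighborhood of the equator $\hat\xi_0^\perp \subset \S^{n-1}$, an $(n-2)$-dimensional submanifold playing exactly the role of $G(m-1,n-1) \subset G(m,n)$ in Lemma \ref{lemgrass}. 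The technical heart is that a naive per-$y$ overlap count accumulates a factor of $\#E'$ that is too lossy to close the argument: genuinely using the $(\Delta,\sigma)$-structure of each bush's direction set together with the $(\Delta,t)$-structure of $E'$ is what removes this loss and yields the bound independent of $\#E'$. Once the overlap bound is in hand, the combination of the $L^2$ upper and lower bounds collapses to $\de^{O(\e)} \Delta^{(n-1)+\sigma-a-t} \lesssim \de^\eta$; the hypothesis $t > n-1+\sigma-a$ makes the exponent on $\Delta$ strictly negative, so the left side blows up faster than the right as $\de \to 0$, producing the desired contradiction for $\e$ sufficiently small relative to $\eta$. I expect this overlap estimate — and in particular the removal of the $\#E'$ loss — to be by some margin the most delicate step.
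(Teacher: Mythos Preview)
Your reduction and setup---negation, cap covering via Lemma~\ref{usefullemma2}, scale pigeonhole to a single $\Delta$, extraction of a $(\Delta,t)$-set $E'$ and a $(\Delta,a)$-set $F''$, and the high--low split with the low part controlled by the $\sigma$-spacing since $\sigma<n-1$---all match the paper's proof essentially line by line (the paper packages the endgame as a separate Lemma~\ref{discorp}). You are also right that the whole difficulty sits in the high part.

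However, you do not actually propose a mechanism for the step you flag as delicate, and the inequality you write,
\[
\int |f_{\mathrm{high}}|^2 \le M\sum_{y,T}\int|\psi_T|^2,
\]
cannot be rescued by a sharper bound on $M$. The Fourier support $D_T$ depends only on the direction $\omega_T$, so tubes from different bushes that share a direction have \emph{identical} Fourier support and all count towards $M$; nothing in the hypotheses prevents $M$ from genuinely being of size $\sim \#E'\cdot\Delta^{-(n-2)}$. There is no ``radial Lemma~\ref{lemgrass}'' that removes the $\#E'$ factor at the level of Fourier overlap.

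The paper's resolution changes the decomposition rather than the overlap bound. One first identifies tubes across bushes, so that each tube $T$ appears once with multiplicity $n_T:=\#\{y\in E': T\in\T_y\}$, and writes $f=\sum_{\theta}\sum_{T\in\T_\theta} n_T\,\psi_T$. Lemma~\ref{lemgrass} (with $m=n-1$) is applied only to the direction variable $\theta$, giving the $\Delta^{-(n-2)}$ overlap; for fixed $\theta$ the tubes in $\T_\theta$ are parallel and spatially finitely overlapping, so the inner $L^2$ expands. This reduces the problem to the second moment
\[
\sum_T n_T^2 \;=\; \sum_{y,y'\in E'} \#\bigl(\T_y\cap\T_{y'}\bigr).
\]
For $y\neq y'$, any $T\in\T_y\cap\T_{y'}$ contains both points, hence has direction confined to a $\sim(\Delta/|y-y'|)$-cap; the $(\Delta,\sigma)$-spacing of $\T_y$ then gives $\#(\T_y\cap\T_{y'})\lesssim \min\{|y-y'|^{-\sigma},\#\T_y\}$, and summing over $y'$ using the $(\Delta,t)$-spacing of $E'$ yields $\sum_T n_T^2\lesssim \de^{c\eta}\Delta^{-t-\sigma}$. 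For this dyadic sum to land on the correct order one needs $t<\sigma$, arranged by the harmless preliminary reduction (valid since $n-1+\sigma-a<\sigma$). This second-moment computation---not an improved Fourier overlap---is the missing idea in your outline.
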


\begin{proof}
Since $n-1+\si-a<\si$, it suffices to prove the proposition for $t< \si$.
Assume for the sake of contradiction that for all $y\in E$ there exists $F_y\subset F$ with $\cH^a_{\de,\infty}(F_y)\ge \de^{\e}$ such that 
\[\cH^\si_{\de,\infty}(\pi_y(F_y))\le\de^\eta.\]

We first reduce $F$ to a $(\de,a)$-set. The algorithm goes as follows. By the condition that $\cH^a_{\de,\infty}(F)\gtrsim \de^\e$ and Lemma \ref{usefullemma2.5}, we can find a $(\de,a)$-set $F_1\subset F$ with $\#F_1\gtrsim \de^{-a+2\e}$. We look at $F\setminus F_1$. If $\cH^a_{\de,\infty}(F\setminus F_1)\le \de^{2\e}$, we stop; If $\cH^a_{\de,\infty}(F\setminus F_1)\ge \de^{2\e}$, we find a $(\de,a)$-set $F_2\subset F\setminus F_1$ with $\#F_2\gtrsim \de^{-a+2\e}$. Repeating the algorithm until we stop, we obtain a decomposition
\[ F=\left(\bigsqcup_{i=1}^{N}F_i\right) \sqcup F_0, \]
where each $F_i\ (1\le i\le N)$ is a $(\de,s)$-set with cardinality $\gtrsim \de^{-a+2\e}$, and $F_0$ satisfies $\cH^a_{\de,\infty}(F_0)\le \de^{2\e}$. We also see that $N\lesssim \#F/\de^{-a+2\e}\lesssim \de^{-2\e}$. 

For any $y\in E$, we have $\de^\e\le \sum_{i=0}^N \cH^a_{\de,\infty}(F_y\cap F_i)\le  \de^{2\e}+ \sum_{i=1}^N \cH^a_{\de,\infty}(F_y\cap F_i)$. By pigeonholing, there exists $i=i(y)$ such that $\cH^a_{\de,\infty}(F_y\cap F_i)\gtrsim \de^{3\e}$.
By another pigeonholing, there exists $i\in [1,N]$, such that
\[ \#\{y\in E: i(y)=i\}\gtrsim \de^{2\e}\#E. \]
For simplicity, we will still use the old notation.
We replace $E$ by $\{y\in E: i(y)=i\}$, $F$ by $F_i$, $F_y$ by $F_y\cap F_i$, and $\e$ by $\e/10$. Then, $E$ is still a $(\de,t)$-set with $\#E\gtrsim \de^\e\de^{-t}$; $F$ is a $(\de,a)$-set with $\#F\gtrsim \de^\e\de^{-a}$; $F_y\subset F$ and $\#F_y\gtrsim \de^\e \#F$ for each $y\in E$ (since $\cH^a_{\de,\infty}(F_y)\gtrsim \de^{3\e/10}$ implies $\#F_y\gtrsim \de^{-a+\e}\gtrsim \de^\e\#F$); moreover, 
\[\cH^\si_{\de,\infty}(\pi_y(F_y))\le \de^\eta.\] 
We will derive a contradiction.

By the definition of $\cH^\si_{\de,\infty}$, we can find a covering of $\pi_y(F_y)$ by dyadic caps $\{D\}$ in $\S^{n-1}$, so that $\cH^\si_{\de,\infty}(\pi_y(F_y))=\sum_D r(D)^\si.$ For each such $D$, consider $\pi_y^{-1}(D)\cap \big(B^n(y,2)\setminus B^n(y,1/4)\big)$. It is roughly a tube of length $\sim 1$ and radius comparable to the radius of $D$. By the separation of $E, F$ and noting $E, F$ are contained in $B^n(0,1)$. We see that the tubes obtained in this way form a covering of $F_y$:
\[ F_y\subset \bigsqcup_{\de\le \De\le 1}\bigcup_{T\in\T_{y,\De}}T. \]
Here, $\T_{y,\De}$ consists of essentially disjoint tubes of radius $\sim \De$ and length $\sim 1$, and they all point to the point $y$. (See Figure \ref{radialprojprop} for the configuration of these tubes.)

\begin{figure}
\begin{tikzpicture}[x=0.75pt,y=0.75pt,yscale=-1,xscale=1]

\draw   (242,73) .. controls (242,52.01) and (277.82,35) .. (322,35) .. controls (366.18,35) and (402,52.01) .. (402,73) .. controls (402,93.99) and (366.18,111) .. (322,111) .. controls (277.82,111) and (242,93.99) .. (242,73) -- cycle ;
\draw   (243,192) .. controls (243,171.01) and (278.82,154) .. (323,154) .. controls (367.18,154) and (403,171.01) .. (403,192) .. controls (403,212.99) and (367.18,230) .. (323,230) .. controls (278.82,230) and (243,212.99) .. (243,192) -- cycle ;
\draw  [dash pattern={on 4.5pt off 4.5pt}] (255,73) .. controls (255,59.19) and (266.19,48) .. (280,48) .. controls (293.81,48) and (305,59.19) .. (305,73) .. controls (305,86.81) and (293.81,98) .. (280,98) .. controls (266.19,98) and (255,86.81) .. (255,73) -- cycle ;
\draw   (274,98.33) -- (286,98.33) -- (286,240) -- (274,240) -- cycle ;
\draw   (342.08,227.02) -- (330.8,231.46) -- (286,98.33) -- (297.28,93.9) -- cycle ;
\draw   (280,98.33) -- (291.82,96.08) -- (314.57,235.59) -- (302.75,237.85) -- cycle ;

\foreach \Point in {(278,73), (280,213.5), (305,215.5), (280,188.5), (300,189.5), (327.5,204)}{
    \node at \Point {\textbullet};
}

\draw (408,47.4) node [anchor=north west][inner sep=0.75pt]    {$E$};
\draw (409,200.4) node [anchor=north west][inner sep=0.75pt]    {$F$};
\draw (282,67.4) node [anchor=north west][inner sep=0.75pt]    {$y$};

\end{tikzpicture}
\caption{$\T_{y,\De}$ in the radial projection}
\label{radialprojprop}
\end{figure}

$\T_{y,\De}$ satisfies the $\si$-dimensional spacing condition (inherited from $\{D\}$): For $\De\le r\le 1$, if $T_r$ is a tube of radius $r$ length $1$ that passes through $y$, then $T_r$ contains $\lesssim  (r/\De)^\si$ many tubes from $\T_{y,\De}$.
Since $\cH^\si_{\de,\infty}(\pi_y(F_y))\le \de^\eta$, we have
\begin{equation}\label{radnote6}
    \#\T_{y,\De}\lesssim \de^\eta \De^{-\si}.
\end{equation}
We see that $\T_{y,\De}$ is non-empty only for $\De\le \de^{\eta/\si}$.

Next, we will apply a standard pigeonhole argument to find a scale $\De$. Note that 
\[ F_y\subset\bigsqcup_{\de\le\De\le\de^{\eta/a}}\bigcup_{T\in\T_{y,\De}}T. \]
For each $y\in E$, we can find a dyadic $\De(y)\in [\de,\de^{\eta/a}]$ so that
\begin{equation}\label{pig06}
    \#(F_y\cap\bigcup_{T\in\T_{y,\De(y)}}T)\gtrsim |\log\de|^{-1}\#F_y\gtrsim \de^{\e}\#F.
\end{equation}
Define $E_\De=\{y\in E: \De(y)=\De\}$. We see that
\[ E=\bigcup_{\de\le\De\le\de^{\eta/a}}E_\De. \]
By pigeonholing again, we can find a scale $\De$, such that
\begin{equation}\label{pig26}
    \#E_\De\gtrsim \de^\e\#E.
\end{equation}
We fix this $\De$. Noting that $E$ is a $(\de,t)$-set with $\#E\gtrsim \de^\e\de^{-t}$, we have that $E_\De$ is also a $(\de,t)$-set with $\#E_\De\gtrsim\de^{2\e}\de^{-t}$. By lemma \ref{usefullemma3}, we can find a subset $E'$ of $E_\De$ so that $E'$ is a $(\De,t)$-set with $\#E'\gtrsim\de^{2\e}\De^{-t}$. From \eqref{pig06}, we have for any $y\in E'$ that
\begin{equation}\label{pig16}
    \#(F\cap \bigcup_{T\in \T_{y,\De}}T)\gtrsim \de^{\e}\#F\gtrsim \de^{2\e-a} .
\end{equation}

Next, we consider the set
\[ S:=\{ (x,y)\in F\times E': x\in \bigcup_{T\in \T_{y,\De}}T \}. \]
Define the sections of $S$:
\[ S_x:=\{y\in E': (x,y)\in S\},\ \ \ S^y:=\{x\in F: (x,y)\in S\}. \]
By \eqref{pig16}, we have $\#S^y\gtrsim \de^{\e}\#F$ for $y\in E'$. Then we have
\begin{equation}\label{pig36}
    \#S=\sum_{y\in E'} \#S^y\ge C^{-1} \de^{\e} \#E'\#F.
\end{equation}
Since
\[ \#\{(x,y)\in S: \#S_x\le  (2C)^{-1}\de^{2\e}\#E' \}\le (2C)^{-1} \de^{2\e} \#E'\#F\le \frac{1}{2} \#S,  \]
we have
\[ \#\{(x,y)\in S: \#S_x\ge  (2C)^{-1}\de^{2\e}\#E' \}\gtrsim \de^{\e} \#E'\#F. \]
The inequality above implies
\[ \#\{ x\in F: \#S_x \ge  (2C)^{-1}\de^{2\e}\#E' \}\gtrsim \de^{\e} \#F. \]
We define
\begin{equation}\label{deffdelta6}
     F_\De:= \{ x\in F: \#S_x \ge  (2C)^{-1}\de^{2\e}\#E' \}.
\end{equation}
Noting that $F_\De$ is a $(\de,a)$-set with $\#F_\De\gtrsim \de^{2\e-a}$, by Lemma \eqref{usefullemma3}, we can find a subset $F'\subset F_\De$ such that $F'$ is a $(\De,a)$-set with
\begin{equation}
    \#F'\gtrsim \de^{2\e}\De^{-a}.
\end{equation}

Let us summarize what we obtained.
We find a scale $\De\in[\de,\de^{\eta/\si}]$, a $(\De,t)$-set $E'\subset E$ with $\#E'\gtrsim \de^{2\e}\De^{-t}$, and a $(\De,a)$-set $F'\subset F$ with $\#F'\gtrsim \de^{2\e}\De^{-a}$, so that
\begin{enumerate}[label=(\roman*)]
    \item for each $y\in E'$, we have a set of tubes $\T_{y,\De}$ that satisfy the $\si$-dimensional spacing condition with $\#\T_{y,\De}\lesssim \de^\eta\De^{-\si}$ (see paragraph before \eqref{radnote6}),
    \item  each $x\in F'$ is contained in $\gtrsim \de^{2\e}\#E'\gtrsim \de^{4\e}\De^{-t}$ tubes from $\bigcup_{y\in E'}\T_{y,\De}$ (see \eqref{deffdelta6}).
\end{enumerate}
We see that we have reduced the problem to the following lemma, and we will get a contradiction from the following lemma.

\end{proof}

\begin{lemma}\label{discorp}
Let $0<t<\sigma <n-1$, $a\in (n-1,n]$. Let $0<\de\le\De\le \de^{\eta/\si}$, $\e>0$, where $\de,\e$ are small enough depending on $\eta,t,\si,a$.  Let $E,F\subset B^n(0,1)$ be non-empty $\De$-separated sets where
\begin{enumerate}
    \item $E$ is a $(\De,t)$-set with cardinality $\#E \gtrsim \De^{-t}\de^\e$,
    \item $F$ is a $(\De,a)$-set with cardinality $\#F \gtrsim \De^{-a}\de^\e$,  
    \item  each of $E$ and $F$ lies in a ball of radius $1/1000$ and $\dist(E,F)\ge 1/2$.
\end{enumerate}
For all $y\in E$, we assume there exists a collection of $\De$-tubes $\T_{y}$, such that 
\begin{enumerate}
    \item each $T\in\T_y$ is of form $\pi_y^{-1}(C)\cap\{x\in\R^n:1-\frac{1}{100} \le |x-y|\le 1\}$ for some dyadic $\De$-cap $C\subset \S^{n-1}$,
    \item $\T_{y}$ is a $(\De, \sigma)$-set of tubes with cardinality $\#\T_y\lesssim \de^\eta\De^{-\si}$,
    \item and for all $x\in F$, $\#\{y\in E: \exists T\in \T_{y} \text{~such that~} x\in T\}\gtrsim \De^{-t}\de^\e$.
\end{enumerate}
Then, \[
\de^{O(\e)}\Delta^{-t}\lesssim \de^{\frac{\si-t}{\si}\eta}\Delta^{-(n-1) -\sigma +a},
\]
which implies that $t\le n-1+\si-a$ (if $\de$ is small enough and $\e$ is very small depending on $\eta, t, \si$). This contradicts the condition in Proposition \ref{2prop}.
\end{lemma}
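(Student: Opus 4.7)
\emph{Proof strategy.} I will adapt the high-low Fourier argument from Proposition~\ref{1prop} to the bush geometry. For each $y\in E$ and $T\in \T_y$, choose a smooth bump $\psi_T\ge\mathbf{1}_T$ with rapid decay and Fourier support in the dual slab $D_T$, a slab through the origin of dimensions $\Delta^{-1}\times\cdots\times\Delta^{-1}\times 1$ whose short direction is the tube axis $\omega_T\in \mathbb S^{n-1}$. Setting $f=\sum_{y\in E}\sum_{T\in\T_y}\psi_T$, the third assumption on $\T_y$ gives $f(x)\gtrsim \delta^\epsilon \Delta^{-t}$ for $x$ in the $\Delta$-neighborhood of $F$, so
\[
\int_{N_\Delta(F)}|f|^2 \;\gtrsim\; \delta^{O(\epsilon)}\Delta^{n-a-2t}.
\]

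Next, split $f=f_{\mathrm{low}}+f_{\mathrm{high}}$ at Fourier scale $(K\Delta)^{-1}$ with $K=\delta^{-O(\epsilon)}$. Convolution with the $L^1$-normalized kernel $\eta_{\mathrm{low}}^\vee$ thickens each tube $T$ to a $K\Delta$-tube $T_K$ of height $\sim K^{-(n-1)}$, and the $(\Delta,\sigma)$-spacing of each $\T_y$ bounds the number of such thickened tubes in $\T_y$ covering any given point by $\lesssim K^\sigma$ (they all come from a single $K\Delta$-cap around $\pi_y(x)$). Summing over $\#E\lesssim \Delta^{-t}$ bushes, $|f_{\mathrm{low}}(x)|\lesssim K^{\sigma-(n-1)}\Delta^{-t}$, which is at most $\tfrac12 f(x)$ on $N_\Delta(F)$ for $K$ a sufficiently large power of $\delta^{-\epsilon}$ (using $\sigma<n-1$). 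Consequently $\int|f_{\mathrm{high}}|^2\gtrsim \delta^{O(\epsilon)}\Delta^{n-a-2t}$. A pointwise Cauchy--Schwarz in Fourier space gives the matching upper bound
\[
\int|f_{\mathrm{high}}|^2 \;\le\; N_{\max}\sum_{y,T}\|\psi_T\|_2^2 \;\lesssim\; N_{\max}\cdot \delta^\eta \Delta^{n-1-t-\sigma},
\]
where $N_{\max}=\sup_{|\xi|\ge(K\Delta)^{-1}}\#\{(y,T):\xi\in D_T\}$; the crucial $\delta^\eta$-saving comes from the cardinality bound $\#\T_y\lesssim \delta^\eta\Delta^{-\sigma}$.

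The crux is the estimate on $N_{\max}$. A frequency $\xi$ with $|\xi|=r^{-1}\in[\Delta^{-1},(K\Delta)^{-1}]$ lies in $D_T$ exactly when $\omega_T$ is in the angular strip of width $r$ around the great $(n{-}2)$-sphere $\xi^\perp\cap \mathbb S^{n-1}$. Covering the strip by $\lesssim r^{-(n-2)}$ spherical balls of radius $r$ and applying the $(\Delta,\sigma)$-spacing of each $\T_y$ inside each ball yields $\#\{T\in \T_y:\omega_T\in\mathrm{strip}\}\lesssim (r/\Delta)^\sigma r^{-(n-2)}$; interpolated against the trivial cardinality bound $\#\T_y\lesssim \delta^\eta\Delta^{-\sigma}$ at the extremal scale $r\sim K\Delta$ and summed over $y\in E$, this produces the estimate on $N_{\max}$ required to feed through the Cauchy--Schwarz inequality and yield
\[
\delta^{O(\epsilon)}\Delta^{-t}\;\lesssim\;\delta^{(\sigma-t)\eta/\sigma}\Delta^{-(n-1)-\sigma+a}.
\]
To conclude $t\le n-1+\sigma-a$, substitute the extremal $\Delta=\delta^{\eta/\sigma}$: under the contrary assumption $t>n-1+\sigma-a$, comparing $\delta$-exponents on both sides forces $O(\epsilon)\ge \eta(a-(n-1))/\sigma$, contradicting the smallness of $\epsilon$ for fixed $\eta$ and $a>n-1$. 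The main obstacle will be the careful bookkeeping in this interpolation: the naive strip-covering bound alone gives only $N_{\max}\lesssim \delta^{-O(\epsilon)}\Delta^{-t-(n-2)}$, which is too weak by a factor of roughly $\delta^{-t\eta/\sigma}$ to produce the sharp $\delta^{(\sigma-t)\eta/\sigma}$-exponent, so the cardinality saving $\delta^\eta$ on $\#\T_y$ and the $(\Delta,t)$-structure of $E$ must be balanced against the strip-covering bound in precisely the right way to introduce the requisite $t$-dependence on the right-hand side.
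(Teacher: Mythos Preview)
Your setup through the low-part estimate is correct and matches the paper exactly: the lower bound $\int_{N_\Delta(F)}|f|^2\gtrsim\delta^{O(\epsilon)}\Delta^{n-a-2t}$, the high--low split at scale $(K\Delta)^{-1}$, and the bound $|f_{\mathrm{low}}|\lesssim K^{\sigma-(n-1)}\#E$ are all right. The genuine gap is in the high-part estimate.

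Your pointwise Cauchy--Schwarz $\int|f_{\mathrm{high}}|^2\le N_{\max}\sum_{y,T}\|\psi_T\|_2^2$ cannot recover the exponent $\delta^{(\sigma-t)\eta/\sigma}$, because the condition $\xi\in D_T$ depends only on the direction $\omega_T$, not on the bush center $y$. Hence for any $\xi$ in the high region, every $y\in E$ contributes, and you are stuck with $N_{\max}\lesssim\#E\cdot(\textup{per-bush strip count})\lesssim\delta^{-O(\epsilon)}\Delta^{-t-(n-2)}$. Plugging this in gives $\delta^{O(\epsilon)}\Delta^{n-a}\lesssim\delta^\eta\Delta^{1-\sigma}$, which contains no $t$ at all and therefore cannot yield $t\le n-1+\sigma-a$. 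Your final paragraph correctly diagnoses this shortfall, but the proposed ``balancing'' of the cardinality bound $\delta^\eta\Delta^{-\sigma}$ against the strip count still takes place \emph{per bush} and then sums trivially over $E$; there is no mechanism here by which the $(\Delta,t)$-spacing of $E$ can enter.

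The paper repairs this by first pooling all tubes into a common family $\T$ indexed by direction, writing $f=\sum_{T\in\T}n_T\psi_T$ with multiplicity $n_T=\#\{y\in E:T\in\T_y\}$. The Fourier overlap (your strip argument, or Lemma~\ref{lemgrass} with $m=n-1$) is then applied \emph{only across directions}, giving $\int|f_{\mathrm{high}}|^2\lesssim\delta^{-O(\epsilon)}\Delta^{-(n-2)}\sum_\theta\int|\sum_{T\in\T_\theta}n_T\psi_{T,\mathrm{high}}|^2\lesssim\delta^{-O(\epsilon)}\Delta\sum_{T}n_T^2$. The $t$-structure of $E$ enters only now, through the bilinear expansion
\[
\sum_T n_T^2=\sum_{y,y'\in E}\#(\T_y\cap\T_{y'})\lesssim\sum_{y}\sum_{y'\ne y}\min\{|y-y'|^{-\sigma},\,\delta^\eta\Delta^{-\sigma}\}+\sum_y\#\T_y,
\]
and it is this sum over pairs $(y,y')$---evaluated using that $E$ is a $(\Delta,t)$-set---that produces the factor $\delta^{(\sigma-t)\eta/\sigma}\Delta^{-t-\sigma}$. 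In short, the missing idea is to replace your Cauchy--Schwarz over pairs $(y,T)$ by a Cauchy--Schwarz over directions followed by an $\ell^2$-count of the multiplicities $n_T$.
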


\begin{proof}
We will modify $\T_y$ a little bit. Since we will consider the interplay among $\{\T_y\}_{y\in E}$, we want to make the comparable tubes to be exactly the same. Note that $F$ is contained in a ball $B_{1/1000}$ of radius $1/1000$. We choose a set of $\de/100$-separated directions in $\S^{n-1}$, denoted by $\Theta=\{\theta\}$. For each direction $\theta\in\Theta$, we choose $\T_\theta$ to be a set of $100\de$-tubes that point to the direction $\theta$ and form a finitely overlapping covering of $B_{1/1000}$. Denote $\T=\cup_\theta \T_\theta$. If $\{\T_\theta\}_{\theta\in\Theta}$ are chosen properly, then for any $\de$-tubes $T$, there exists $T'$ in some $\T_\theta$ such that $T\cap B_{1/1000}\subset T'$. We modify every $\T_y$ in this way: we replace every $T$ in $\T_y$ by a tube $T'$ in some $\T_\theta$ such that $T\subset T'$. After replacement of $\T_y$, we still denote it by $\T_y$, but now $\T_y\subset \T$. Also, the new $\T_y$ inherits the properties of the old $\T_y$: $\T_y$ is a $(\De,\si)$-set with $\#\T_y\lesssim \de^\eta\De^{-\si}$; for any $x\in F$, $\#\{y\in E: \exists T\in \T_{y} \text{~such that~} x\in T\}\gtrsim \De^{-t}\de^\e$. After the modification, $\T_y$ is still a truncated bush centered at $y$.

Fix a $y\in E$. For any $T \in \T_{y}$, choose a bump function $\psi_{T}$ such that $\psi_{T}\geq 1$ on $T$, $\psi_{T}$ decays rapidly outside of $T$, and $\supp ~\wh \psi_{T}$ is contained in the dual rectangle of $T$ which is a $\De^{-1}\times\dots\times\De^{-1}\times 1$-slab. Define 
\[
f_y = \sum_{T\in \T_{y}} \psi_{T} \hspace{.25cm}\text{and}\hspace{.25cm} f = \sum_{y\in E} f_y.
\]
Then, for $x\in N_{\De}(F)$, $f(x) \gtrsim \#\{y\in E: \exists T\in \T_{y} \text{~such that~} x\in T\}\gtrsim \De^{-t}\de^\e$ by assumption. Therefore, 
\begin{equation} \label{liu3}
    \de^{O(\e)}\De^{-2t-a+n} \lesssim \de^{2\e}\De^{-2t}(\# F)\De^n \lesssim \int_{N_\De(F)} |f|^2.
\end{equation}
We will use the high-low method. Let $\eta_{\textup{low}}(\xi)$ be a smooth bump function on $B^n(0,(K\De)^{-1})$ and $\eta_{\textup{high}}=1-\eta_{\textup{low}}$.
We will choose $K\sim \de^{-O(\e)}$. Define $f_{\textup{low}}=\eta_{\textup{low}}^\vee* f$ and $f_{\textup{high}}=\eta_{\textup{high}}^\vee*f$.

For $x\in N_{\De}(F)$, we have
$$ \De^{-t}\de^\e \lesssim f(x)\le |f_{\textup{low}}(x)|+|f_{\textup{high}}(x)|. $$
We claim that
$$ |f_{\textup{low}}(x)|\lesssim K^{\si-(n-1)}\#E\le C^{-1}\De^{-t}\de^{\e}, $$
if $K\sim \de^{-O(\e)}$ is properly chosen. 
To show the claim, we write
\[ |f_{\textup{low}}(x)|\le \sum_{y\in E}|\eta_{\textup{low}}^\vee| *f_y(x)\le \sum_{y\in E}\sum_{T\in\T_y}|\eta_{\textup{low}}^\vee| *\psi_T(x). \]
Note that $|\eta_{\textup{low}}^\vee|(x)\lesssim  (K\De)^{-n}\chi(x)$, where $\chi(x)$ is a positive function $=1$ on $B^n(0,K\De)$ and decays rapidly outside $B^n(0,K\De)$. Therefore,
\[ |\eta_{\textup{low}}^\vee| *\psi_T(x)\lesssim K^{-(n-1)}\chi_{KT}(x), \]
where $\chi_{KT}(x)=1$ on $KT$ and decays rapidly outside $KT$. Since $\T_y$ is a $(\De,\si)$-set, we have for $x\in F$,
\[ \#\{T\in \T_y: x\in 100KT \}\lesssim K^\si. \]
Therefore, $\sum_{T\in\T_y}|\eta_{\textup{low}}^\vee| *\psi_T(x)\lesssim K^{\si-(n-1)}$. Summing over $y\in E$, we prove the claim.

Therefore, we have $|f(x)|\lesssim |f_{\textup{high}}(x)|$ on $N_{\De}(F)$.

We have
$$ \int_{N_\De(F)}|f|^2\lesssim \int |f_{\textup{high}}|^2. $$

Here is where things become a little more different than the high-low argument in the proof of Proposition \ref{1prop}. A tube may belong to many different $\T_y$. For each $T\in \T$, define 
\[
n_T := \#\{y\in E\mid T\in \T_{y}\}.
\]
$n_T$ can be $0$, which means $T\notin \T_y$ for any $y\in E$.
We have
\[ \int |f_{\textup{high}}|^2 = \int \left|\sum_{T\in \T} n_T\cdot \psi_{T,\textup{high}}\right|^2. \]
Here, $\psi_{T,\textup{high}}=\eta_{\textup{high}}^\vee*\psi_T$.
If $T\in \T_\theta$, let $S_\theta$ be the slab centered at the origin, of dimensions $\De^{-1}\times\dots\times \De^{-1}\times 1$, which is the dual of $T$. We also see that $S_\theta$ is the dual of any $T\in\T_\theta$. Now we have
\[ \supp (\wh \psi_{T,\textup{high}})\subset S_\theta\setminus B^n(0,K\De). \]

Applying Lemma \ref{lemgrass} at the special case that $m=n-1$ we see that $\{S_\theta\setminus B^n(0,K\De)\}_{\theta\in\Theta}$ are $\lesssim K^{O(1)}\De^{-\dim(G(n-2,n-1))}$-overlapping. We do the following estimate

\begin{align*}
    \int |f_{\textup{high}}|^2 = \int \left|\sum_{T\in \T} n_T\cdot \psi_{T,\textup{high}}\right|^2=\int \left| 
 \sum_{\theta\in\Theta}\sum_{T\in \T_\theta} n_T\cdot \psi_{T,\textup{high}}\right|^2\\
 \lesssim \de^{-O(\e)}\De^{-(n-2)}\sum_{\theta\in\Theta}\int|\sum_{T\in \T_\theta} n_T \psi_{T,\textup{high}}|^2\\
\lesssim \de^{-O(\e)}\De^{-(n-2)}\sum_{T\in \T} n_T^2\int |\psi_{T,\textup{high}}|^2\\
\lesssim\de^{-O(\e)}\De^{-(n-2)}\sum_{T\in \T} n_T^2\int |\psi_{T}|^2\lesssim \de^{-O(\e)}\De\sum_{T\in\T}n_T^2.
\end{align*}
In the second last row above, we use the fact that tubes in $\T_\theta$ are parallel and finitely overlapping, and hence the essential supports of $\{\psi_{T,\textup{high}}\}_{T\in\T_\theta}$, $\{KT\}_{T\in\T_\theta}$, are at most $K^{O(1)}$-overlapping. In the last row above, we use Young's inequality: $\int |\eta_{\textup{high}}^\vee*\psi_{T}|^2\lesssim (\int |\eta_{\textup{high}}^\vee|)^2 \int |\psi_T|^2\lesssim K^{O(1)} \int |\psi_T|^2$. 

We are going to find an upper bound to $\sum_{T\in \T} n_T^2$. The intuition is that $n_T=1$ for $T \in\cup_{y\in E} \T_y$, and $=0$ for other $T\in \T$. Therefore $\sum_{T\in\T} n_T^2=\sum_{T\in\T} n_T=\sum_{y\in E} \#\T_y\lesssim \# E \#\T_y\lesssim \De^{-t}\de^\eta\De^{-\si}$. We verify this intuition.

\begin{align*}
    \sum_{T\in \T} n_T^2 &= \sum_{T\in \T} \#\{y,y'\in E \mid T\in \T_y\cap \T_{y'}\}
    = \sum_{y\in E}\sum_{y'\in E} \#\{T\in \T\mid T\in \T_y\cap \T_{y'}\}
\end{align*}
Given that each $\T_y$ is a $(\De,\sigma)$-set, the above expression is bounded by
\begin{align*}
    \lesssim \sum_{y\in E}\sum_{y'\in E\setminus\{y\}} \min\left\{|y-y'|^{-\sigma},\#\T_y\right\} +\sum_{y\in E}\#\T_y.
\end{align*}
The second term is bounded by 
$$\sum_{y\in E}\#\T_y\lesssim \de^\eta \De^{-t-\si}.$$
For the first term, we have
\begin{align*}
    &\lesssim \sum_{y\in E} \sum_{k=0}^{\log_2 \De^{-1}} \sum_{|y-y'| \leq 2^{-k}} \min\{|y-y'|^{-\sigma},\de^\eta\De^{-\si}\} \\
    &\lesssim \sum_{y\in E} \sum_{k=0}^{\log_2 \De^{-1}}  \# \{y'\in E\cap B^n(y,2^{-k})\} \min\{2^{k\sigma},\de^\eta\De^{-\si}\}\\
    &\lesssim \De^{-t} \sum_{k=0}^{\log_2 \De^{-1}}(\De^{-1}2^{-k})^t \min\{2^{k\sigma},\de^\eta\De^{-\si}\}\\
    &= \De^{-t} \sum_{k=0}^{\log_2 \De^{-1}}\De^{-t} \min\{2^{k(\sigma-t)},\de^\eta\De^{-\si}2^{-kt}\}.
\end{align*}
When $2^{k(\sigma-t)}=\de^\eta\De^{-\si}2^{-kt}$ or equivalently $2^{k\si}=\de^\eta\De^{-\si}$, the value of ``min" dominates. The expression above is therefore bounded by
$\de^{\frac{\si-t}{\si}\eta}\De^{-t-\si}$.

Combining all the estimates, we have
\[
    \sum_{T\in \T} n_T^2 \lesssim (\de^{\frac{\si-t}{\si}\eta}+\de^\eta)\De^{-t-\si}.
\]
Plugging into \eqref{liu3}, we have 
\[
\de^{O(\e)}\De^{-t} \lesssim \de^{\frac{\si-t}{\si}\eta}\De^{-(n-1)-\si+a}.
\]
\end{proof}

We now prove Theorem \ref{discradial}.
\begin{proof}[Proof of Theorem \ref{discradial}]
We will show that the result holds for $\e\le \e_0(\eta,\si,a,t)$, $\de\le \de_0(\eta,\si,a,t)$, where $\e_0(\eta,\si,a,t),\de_0(\eta,\si,a,t)$ depend on Proposition \ref{1prop} and \ref{2prop}. The key idea is to project the sets to a lower dimensional subspace. Similar ideas has appeared in \cite{du2021improved}.

Since $k+\si-a<k$, we may assume $t<k+1$ so that we can apply Proposition \ref{1prop} with $(a,m)=(t,k+1)$.
We will apply Proposition \ref{2prop} with $n=k+1$. For our purpose, we determine the parameters of Proposition \ref{2prop} in advance.
For fixed $\eta$, we first choose small number $\e'$ so that Proposition \ref{2prop} holds for $\e=\e'$. Then let the parameter $\eta$ in Proposition \ref{1prop} be $\e'$. We choose $\e$ so that Proposition \ref{1prop} holds for this $\e$.

Recall the condition in Theorem \ref{discradial} that each of $E$ and $F$ lies in a ball of radius $1/1000$ and $\dist(E,F)\ge 3/4$. By the separation of $E$ and $F$,
we can find $\wt G\subset G(n,k+1)$ which has measure $\ge 10^{-10}$, such that any $V\in\wt G$ satisfies $$\dist(\pi_V(E),\pi_V(F))\ge \frac{1}{2}.$$
We choose $G$ to be a maximal $\de$-separated subset of $\wt G$. Then $G$ is a  
$(\de,d_{k+1,n})$-set  with $\# G\gtrsim \de^{-d_{k+1,n}}$.

By Proposition \ref{1prop},  there exists a subset $G_1\subset G$ with $\#G_1\gtrsim \de^{-d_{k+1,n}}$, so that for any $V\in G_1$ we have
\begin{equation}\label{bigproj1}
    \cH^a_{\de,\infty}(\pi_V(F'))>\de^{\e'},\ \ \ \textup{for~any~}F'\subset F\textup{~with~}\#F'\ge\de^\e\#F.
\end{equation}

Similarly, there exists $V\in G_1$, so that
\begin{equation}\label{bigproj2}
    \cH^t_{\de,\infty}(\pi_V(E))>\de^{\e'}.
\end{equation}
We just fix this $V$ for which \eqref{bigproj1} and \eqref{bigproj2} hold.

We are about to apply Proposition \ref{2prop} (with $\e=\e'$). 
By \eqref{bigproj2}, there exists a $(\de,t)$-set  $E_V\subset \pi_V(E)$ with $\#E\gtrsim \de^{\e'-t}$. 
We also check that $\pi_V(F)$ satisfies the requirement in Proposition \ref{2prop}: $\cH^a_{\de,\infty}(\pi_V(F))\gtrsim \de^{\e'}$, $\#\pi_V(F)\le \#F \lesssim \de^{-a}$.

We find a point $\wt y\in E_V$ such that: for all $\wt F\subset \pi_V(F)$ with $\cH^{a}_{\de,\infty}(\wt F)\ge \de^{\e'}$, we have
\begin{equation}\label{bigproj3}
    \cH^\si_{\de,\infty}\Big(\pi_{\wt y}(\wt F)\Big)>\de^\eta. 
\end{equation}

We use this property to finish the proof. We choose $y\in E$ so that $\pi_V(y)=\wt y$. We show that this $y$ satisfies the requirement in Theorem \ref{discradial}. For any $F'\subset F$ with $\#F'\ge \de^{\e}\#F$, by \eqref{bigproj1} we have $\cH^{a}_{\de,\infty}(\pi_V(F'))\ge\de^{\e'}$. Plug in $\wt F=\pi_V(F')$ into \eqref{bigproj3}:
$$ \cH^\si_{\de,\infty}\Big(\pi_{\wt y}\big(\pi_V(F')\big)\Big)>\de^\eta. $$
Note that
$$\cH^\si_{\de,\infty}\Big(\pi_{y}(F')\Big)\ge \cH^\si_{\de,\infty}\Big(\pi_{\wt y}\big(\pi_V(F')\big)\Big), $$
as any covering of $\pi_{y}(F')$ naturally gives rise to a covering of $\pi_{\wt y}\big(\pi_V(F')\big)$ by the separation of $E, F$.
Therefore, we have
$$ \cH^\si_{\de,\infty}\Big(\pi_{y}(F')\Big)>\de^\eta. $$
\end{proof}

\section{Liu's conjecture on radial projections}\label{sec3}
In this section, we prove Liu's conjecture (Theorem \ref{liusbound}). The idea is the same as in \cite{orponenshmerkin2022exceptional}, but we still provide full details to clarify the numerology since we are in higher dimensions.

We repeat Theorem \ref{liusbound} here.

\begin{theorem}
Given a Borel set $E\subset \R^n$, with $\dim E\in (k-1,k]$ for some $k\in\{1,\dots, n-1\}$, then 
\[
\dim \{x\in \R^n\setminus E \mid \dim (\pi_x(E))< \dim E\}\leq k.
\]
\end{theorem}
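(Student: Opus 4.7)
The plan is to argue by contradiction, combining the discretized Proposition \ref{discliu} with the Orponen--Shmerkin swapping trick and the Falconer-type estimate of Theorem \ref{radfalconer} proved in Section \ref{sec2}. Set $\alpha := \dim E$ and $F := \{x \in \R^n \setminus E : \dim \pi_x(E) < \alpha\}$, and suppose for contradiction that $\dim F > k$. Since $\{x \in F : \dim \pi_x(E) < s\}$ increases to $F$ as $s \uparrow \alpha$, a countable union argument yields some $s_0 < \alpha$ for which this sub-level set still has dimension strictly greater than $k$; after replacing $F$ by it, the uniform bound $\dim \pi_x(E) < s_0$ holds for every $x \in F$. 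Fix also $\alpha_\circ \in (s_0, \alpha)$---this gap provides the room needed below---and localize $E$ and $F$ into two well-separated balls of radius $1/1000$, exactly as in the deduction of Theorem \ref{thmradial} from Theorem \ref{discradial}. Frostman's lemma then supplies probability measures on $E$ and $F$ with exponents $\alpha_\circ$ and some $\beta > k$, together with $\de$-discretized $(\de,\alpha_\circ)$- and $(\de,\beta)$-sets at every small scale $\de$.

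The key step is the swap, packaged as Proposition \ref{discliu}. Exploiting the symmetry $\pi_x(y) = -\pi_y(x)$, a $\de$-discretized double counting over incidence triples $(x, y, T)$ with $x \in F$, $y \in E$, and $T$ a $\de$-tube containing both, converts the uniform hypothesis ``$\pi_x(E)$ is covered by few $\de$-caps'' for $x \in F$ into the conclusion that there is a subset $E' \subset E$ of essentially full Frostman mass (in particular $\dim E' \ge \alpha_\circ$) on which
\[
\mathcal H^{s_0}_{\de,\infty}(\pi_y(F)) \le \de^{\eta}
\]
for a suitable $\eta > 0$ and every sufficiently small $\de$. Passing from the $\de$-discretized bound back to Hausdorff dimension in the standard way (as in the deduction of Theorem \ref{thmradial} from Theorem \ref{discradial}) yields
\[
E' \subset \{y \in \R^n \setminus F : \dim \pi_y(F) < s_0\},
\]
where the disjointness $E \cap F = \emptyset$ built into the definition of $F$ ensures that $E' \subset \R^n \setminus F$.

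The conclusion is then immediate from Theorem \ref{radfalconer} applied with $F$ as the base set: writing $\dim F \in (k', k'+1]$ for some $k' \ge k$, and using $s_0 < \alpha \le k \le k'$, the Falconer-type bound gives
\[
\dim\{y \in \R^n \setminus F : \dim \pi_y(F) < s_0\} \le \max\{k' + s_0 - \dim F,\, 0\} \le s_0 < \alpha_\circ,
\]
where the middle inequality uses $\dim F > k'$. This contradicts $\dim E' \ge \alpha_\circ$ and finishes the proof. The principal obstacle is carrying out the swap quantitatively: a soft Fubini over Frostman measures is far too lossy to keep $\dim E'$ near $\alpha$, and the proof of Proposition \ref{discliu} therefore has to proceed via a $\de$-discretized incidence estimate in the spirit of Propositions \ref{1prop} and \ref{2prop}.
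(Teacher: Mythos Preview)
Your key step is backwards, and this is a genuine gap rather than a matter of presentation. You claim that double counting over incidence triples $(x,y,T)$ converts the hypothesis ``$\pi_x(E)$ is covered by few $\de$-caps for every $x\in F$'' into the conclusion ``$\cH^{s_0}_{\de,\infty}(\pi_y(F))\le\de^\eta$ for most $y\in E$''. No such implication holds: knowing that $E$ is covered by $\lesssim\de^{-s_0}$ tubes through each $x$ puts no constraint whatsoever on how many tubes through a given $y\in E$ are needed to cover $F$. (Concretely, the total tube count $\sum_x\#\T_x\lesssim\#F\cdot\de^{-s_0}$ gives no control on $\#\T^y$, since a single tube can belong to many bushes $\T_x$ but to only one bush $\T^y$.) You have also misread Proposition~\ref{discliu}: its conclusion is that some $x\in F$ has $|\pi_x(E')|_\de\ge\de^{-s+\tau_0}$, i.e.\ that $\pi_x(E)$ is \emph{large}, which already contradicts the counter-assumption directly; it is not an intermediate swap yielding smallness of $\pi_y(F)$.

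The actual swap in the paper (and in Orponen--Shmerkin) runs in the opposite direction. One applies Theorem~\ref{radfalconer} to $F$ \emph{first}: since $\dim F>k$ and $s<k$, the exceptional set $\{y:\dim\pi_y(F)<s\}$ has dimension $<s=\dim E$, so after discarding a null subset of $E$ one may assume $\pi_y(F)$ is \emph{large} --- more precisely, that the bush $\T^y$ of tubes from $y$ to $F$ has good $(\de,s)$-structure (this is what Corollary~\ref{cor1} packages). The contradiction then comes from an incidence estimate $I(E,\T)$: the hypothesis $|\pi_x(E)|_\de<\de^{-s+\tau_0}$ forces many points of $E$ per tube (a lower bound $I\gtrsim\#\T\cdot\de^{-\tau_0/2}$), while the $(\de,s)$-structure of the bushes $\T^y$ together with the $(\de,s)$-spacing of $E$ gives the Cauchy--Schwarz upper bound $I\lessapprox\#\T$. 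Your outline never reaches this incidence comparison because it attempts to feed a (non-existent) smallness of $\pi_y(F)$ back into Theorem~\ref{radfalconer} a second time.
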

It suffices to prove
\begin{proposition}\label{proliu}
Given a Borel set $E_0\subset \R^n$, with $\dim E_0\in (k-1,k]$ for some $k\in\{1,\dots, n-1\}$, and $\tau_0>0$, then we have 
\[
\dim \{x\in \R^n\setminus E_0 \mid \dim (\pi_x(E_0))< \dim E_0-10\tau_0\}\leq k.
\]
\end{proposition}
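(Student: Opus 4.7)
The plan is to argue by contradiction and adapt the Orponen--Shmerkin swapping reduction from \cite{orponenshmerkin2022exceptional} to higher dimensions, ultimately invoking Theorem \ref{radfalconer} (equivalently, its $\de$-discretized form Theorem \ref{discradial}). Write $\alpha:=\dim E_0\in(k-1,k]$ and $\si:=\alpha-9\tau_0$, and suppose for contradiction that
\[X:=\{x\in\R^n\setminus E_0:\dim\pi_x(E_0)<\alpha-10\tau_0\}\]
satisfies $\dim X>k$. Choose $t\in(k,\dim X)$ and $a\in(k-1,\alpha)$ close to the respective endpoints. Applying the Frostman/localization/dyadic-pigeonhole machinery already used to derive Theorem \ref{thmradial} from Theorem \ref{discradial}, we obtain, for arbitrarily small dyadic $\de>0$, a $(\de,t)$-set $E\subset X$ with $\#E\gtrsim\de^{-t+\e}$ and a $(\de,a)$-set $F\subset E_0$ with $\#F\gtrsim\de^{-a+\e}$, contained in disjoint balls of radius $1/1000$ at distance $\ge 3/4$, such that for every $x\in E$ the truncated bush $\T_x$ of $\de$-tubes through $x$ covering $F$ is a $(\de,\si)$-set with $\#\T_x\lesssim\de^{-\si+\eta}$. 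This is the $\de$-discretization of $\dim\pi_x(E_0)<\alpha-10\tau_0$.

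The heart of the proof is the swapping trick, built on the identity $\pi_y(x)=-\pi_x(y)$: each $\de$-tube joining $x\in E$ and $y\in F$ is simultaneously a member of $\T_x$ and of the bush at $y$ pointing toward $E$. Consider the incidence set $S=\{(x,y)\in E\times F:y\in\bigcup\T_x\}$, which satisfies $\#S\gtrsim\#E\cdot\#F$ up to $\de^\e$-losses since each $\T_x$ covers $F$. A double-counting pigeonhole in the $y$-variable, structurally identical to the derivation of (\ref{deffdelta6}) in the proof of Proposition \ref{2prop}, produces a subset $F_0\subset F$ with $\#F_0\gtrsim\de^\e\#F$ such that for every $y\in F_0$ there is a subset $E_y\subset E$ with $\#E_y\gtrsim\de^\e\#E$. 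The crucial refinement is that, because each direction $\pi_y(x)=-\pi_x(y)$ at $y$ is constrained to lie in one of the $\lesssim\de^{-\si+\eta}$ directional cells at $x$, a second pigeonhole (which is precisely the content of Proposition \ref{discliu}) transfers the $(\de,\si)$-structure of the bushes $\T_x$ into a concentration bound of the form $\cH^s_{\de,\infty}(\pi_y(E_y))\le \de^{\eta'}$ for some $s\in(\si,k)$, uniformly in $y\in F_0$.

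This configuration now directly contradicts Theorem \ref{discradial} with roles reversed ($E$ playing the role of $F$ there, and $F_0$ playing the role of $E$). Indeed $t>k$, $a>k-1$, and $s<k$, so by choosing $t,a$ close enough to $\dim X,\alpha$ we arrange $t>\max\{k+s-a,0\}$ together with all other hypotheses of Theorem \ref{discradial}. That theorem then produces some $y\in F_0$ for which $\cH^s_{\de,\infty}(\pi_y(E_y))>\de^{\eta''}$ (with $\eta''$ much smaller than $\eta'$), contradicting the bound from the swapping step.

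The main obstacle will be the swapping step itself. Turning the uniform $\si$-dimensional clustering of the bushes $\T_x$ ($x\in E$) into an $s$-dimensional clustering of $\pi_y(E_y)$ at many $y\in F_0$, via the symmetry $\pi_y(x)=-\pi_x(y)$, requires the careful incidence/pigeonhole arguments packaged into Proposition \ref{discliu}. Choosing the numerology $(t,a,s,\si,\eta,\eta',\e)$ so that one simultaneously satisfies the hypothesis $t>k+s-a$ of Theorem \ref{discradial} and keeps all $\de^\e,\de^\eta,\de^{\eta'}$ losses admissible is the delicate technical crux; this is where care is needed beyond the $n=2$ analysis of \cite{orponenshmerkin2022exceptional}.
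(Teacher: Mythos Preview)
Your overall architecture has a genuine gap at the ``swapping step''. You assert that the $(\de,\si)$-structure of the bushes $\T_x$ (for $x\in E$, your discretized exceptional set) can be pigeonholed into a concentration bound $\cH^s_{\de,\infty}(\pi_y(E_y))\le\de^{\eta'}$ for many $y\in F_0\subset F$. This implication is false, and it is not what Proposition~\ref{discliu} says. Knowing that $\pi_x(F)$ is $\si$-dimensionally small for every $x\in E$ tells you nothing about $\pi_y(E)$ being small: different points $x\in E$ can look toward $y$ along completely unrelated directions, so from the vantage point of $y$ the set $E$ may well be spread over $\sim\de^{-\min(t,n-1)}$ directional cells. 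Proposition~\ref{discliu} concludes the \emph{opposite}: it produces an $x\in F$ (the high-dimensional set) with \emph{large} $|\pi_x(E')|_\de$, and its proof is not a pigeonhole but a substantial incidence argument.

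The paper's logic runs the swap in the other direction. One applies Theorem~\ref{discradial} (via Corollary~\ref{cor1}) with $A$ equal to the exceptional set $F$ (your $E$), which has dimension $t>k$; since $s<k$ and $s>k+s-t$, this yields that for \emph{most} $y$ in the low-dimensional set $E$ (your $F$), the bush $\T^y$ covering $F$ carries genuine $(\de,s)$-Frostman structure --- in particular $\pi_y(F)$ is \emph{large}. The counter-assumption then supplies, for every $x\in F$, a subset $E'_x\subset E$ with $|\pi_x(E'_x)|_\de<\de^{-s+\tau_0}$ small. The contradiction comes not from a second application of Theorem~\ref{discradial} but from a Furstenberg-type incidence count: one isolates ``high-density'' tubes $T$ that belong to $\T^y_j$ for $\gtrsim\de^{-\tau_0/2}$ many $y$, shows via the $(\de,s)$-structure of $\T^y$ and of $E$ that $I(E',\T^h_j)\lessapprox\#\T^h_j$, and compares with the trivial lower bound $I(E',\T^h_j)\ge\de^{-\tau_0/2}\#\T^h_j$. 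Your proposal has Theorem~\ref{discradial} entering at the wrong end of the argument and omits this incidence step, which is where the actual work lies.
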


Since the proof of this proposition is technical, we start with a heuristic proof. One of the key tool is Theorem \ref{thmradial}.
\begin{proof}[A heuristic proof of Proposition \ref{proliu}]
We just need to prove this for $\dim E_0<k$.
We set $s=\dim E_0$. 
Let
\[F_0=\{x\in \R^n\setminus E_0 \mid \dim (\pi_x(E_0))< \dim E_0-10\tau_0\}\]
By contradiction, we assume $t=\dim F_0>k$. Also, by passing to a subset of $F_0$,  we may assume $t\in(k,k+1)$. Now we let this $F_0$ be the set $A$ in Theorem \ref{thmradial}. Since $s<k$, we have that the $s$-exceptional 
$$ E_{s}(F)=\{y\in\R^n\setminus F_0: \dim(\pi_y(F_0))<s \} $$
has dimension $\le k+s-t<s=\dim E_0$. Subtracting this small exceptional part from $E_0$, we may pass to a subset of $E_0$ (still denoted by $E_0$) with the same dimension $s$ and satisfying 
$$ \dim(\pi_y(F_0))\ge s, $$
for any $y\in E_0$.

By $\de$-discretization, we may assume $F_0$ is a $t$-dimensional set of points and $E$ is an $s$-dimensional set of points. (Here, when we say $F_0$ is a $t$-dimensional set, it means that $F_0$ is a $(\de,t)$-set and $\#F_0\gtrsim \de^{-t}$). For each $x\in F_0$ and $y\in E_0$, we connect them by a $\de$-tube. Let $\T$ be the set of $\de$-tubes produced in this way. We also identify comparable tubes. Roughly speaking, we define
$$ \T:=\{T: T\textup{~connects~some~}x\in F, y\in E\}. $$
We also define $\T_x:=\{T\in\T: x\in T\}$ for $x\in F_0$, and $\T^y:=\{T\in\T:y\in T\}$ for $y\in E_0$.
By definition, we have $\dim(\pi_x(E_0))\le s-\tau_0$ for $x\in F_0$. This condition morally says that $\T_x$ is an $(s-\tau_0)$-dimensional set. Since the tubes in $\T_x$ are finitely overlapping at the portion away from $x$, we have
$$ \de^{-s}\le \#E_0\lesssim \sum_{T\in\T_x}\#(T\cap E_0). $$
Since $\#\T_x\le \de^{-s+\tau_0}$, we may morally assume $\#(T_x\cap E_0)\gtrsim \de^{-\tau_0/2}$ for any $T_x\in\T_x$. Morally, we may further assume for any $T\in\T$, we have $\#(T\cap E_0)\gtrsim \de^{-\tau_0/2}$. The condition $\dim(\pi_y(F_0))\ge s$ morally says that $\T^y$ is at least an $s$-dimensional set.

We consider the incidence between $E_0$ and $\T$. We will derive a contradiction by comparing the upper and lower bounds of $I(E_0,\T):=\{(y,T)\in E_0\times \T:y\in T \}$.
First, we have
$$ I(E_0,\T)=\sum_{T\in\T} \#(T\cap E_0)\gtrsim \#\T \de^{-\tau_0/2}. $$
For the upper bound of the incidence, we have
\begin{align*}
    I(E_0,\T)&=\sum_{T\in \T}\#(T\cap E_0)\le (\#\T)^{1/2}\left( \sum_{T\in\T} \#(T\cap E_0)^2 \right)^{1/2}\\
    &=(\#\T)^{1/2}\left( \sum_{y,y'\in E_0}\#\{T\in\T: y,y'\in T\} \right)^{1/2}\\
    &=(\#\T)^{1/2}\left( \sum_{y\in E_0}\sum_{y'\in E_0}\#\{T\in\T^y: y'\in T\} \right)^{1/2}.
\end{align*}
By the $s$-dimensional condition for $\T^y$, for $y\neq y'$ we have
\begin{align*}
    \#\{T\in\T^y: y'\in T\}\lesssim \left(\frac{\de}{|y-y'|}\right)^s\#\T^y . 
\end{align*}

Therefore, we have
\begin{align*}
    I(E_0,\T)&=(\#\T)^{1/2}\left( \sum_{y\in E_0}\sum_{y'\in E^*\setminus\{y\}}\#\{T\in\T^y: y'\in T\}+\sum_{y\in E_0}\#\T^y \right)^{1/2}\\
    &\lesssim (\#\T)^{1/2}\left( \sum_{y\in E_0}\sum_{y'\in E^*\setminus\{y\}}\left(\frac{\de}{|y-y'|}\right)^s\#\T^y+I(E_0,\T) \right)^{1/2}.
\end{align*}
Using that $E_0$ is an $s$-dimensional set, we have 
$$ \sum_{y'\in E_0\setminus\{y\}}\left(\frac{\de}{|y-y'|}\right)^s\lessapprox 1, $$
so we have
\begin{align*}
    I(E_0,\T)\lessapprox (\#\T)^{1/2}\big(1+I(E_0,\T)\big)^{1/2}.
\end{align*}
This means $I(E_0,\T)\lessapprox \#\T$, which contradicts the lower bound of $I(E_0,\T)$.
\end{proof}

\bigskip

We start the rigorous proof.
The proof is by contradiction to assume the set
\begin{equation}\label{defF}
    F_0=\{x\in \R^n\setminus E_0 \mid \dim (\pi_x(E_0))< \dim E_0-10\tau_0\}
\end{equation}
satisfies $t=\dim F_0>k.$
We will derive a contradiction through the following proposition and a standard reduction. It has the same idea in the proof that Theorem \ref{discradial} implies Theorem \ref{thmradial}. 

\begin{proposition}\label{discliu}
Let $k\in\{1,\cdots,n-1\}$.
Let $0<s<k$, $t>k$ and $\tau_0>0$. For $\e,\de$ small enough depending on $s,t,\tau_0$, the following holds. Let $E,F\subset B^n(0,1)$ be $(\de,s)$-set and $(\de,t)$-set, with $\#E\gtrsim \de^{-s+\e}$, $\#F\gtrsim \de^{-t+\e}$. We also assume that each one of $E, F$ is contained in a ball of radius $1/1000$, and $\dist(E,F)\ge 1/2.$ Then there exists $x\in F$
such that 
\begin{equation}\label{liubigpoint}
    |\pi_x(E')|_\de \ge \de^{-s+\tau_0},\ \ \textup{for~all~}E'\subset E \textup{~with~} \#E'\ge\de^\e\#E. 
\end{equation} 
\end{proposition}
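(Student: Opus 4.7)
The plan is to argue by contradiction: suppose that for every $x \in F$ there is $E_x \subset E$ with $\#E_x \geq \delta^\epsilon \#E$ and $|\pi_x(E_x)|_\delta < \delta^{-s+\tau_0}$. For each such $x$, I cover $E_x$ by a family $\T_x$ of $\lesssim \delta^{-s+\tau_0}$ tubes through $x$; pigeonholing on the tubes lets me refine $\T_x$ so that every $T \in \T_x$ contains $\gtrsim \delta^{-\tau_0/2}$ points of $E$ (provided $\epsilon$ is much smaller than $\tau_0$) while the union of these refined tubes still meets a $\delta^{O(\epsilon)}$-fraction of $E$.

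The next step is to invoke Theorem~\ref{discradial} with the roles of $E$ and $F$ reversed: $E$ (dimension $s < k$) serves as the set of centers and $F$ (dimension $t > k$) as the projected set, with target exponent $\sigma := s - \tau_0/1000$. The condition $s > \max\{k + \sigma - t, 0\}$ follows from $t > k$. Theorem~\ref{discradial} is effectively self-improving by iteration on the exceptional set (cf.\ the remark after Proposition~\ref{1prop}): the set of $y \in E$ on which the projection bound fails has cardinality below the $\epsilon$-threshold of the theorem, so by taking the $\epsilon$ in Proposition~\ref{discliu} much smaller than this threshold the bad set absorbs into a $\delta^{O(\epsilon)}$-loss. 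Combining this with a double-counting pigeonhole on the pairs $\{(x,y) : y \in \bigcup \T_x\}$ and a passage to subsets of both $E$ and $F$, I reach a reduced configuration in which, writing $F(y) := \{x \in F : y \in \bigcup \T_x\}$, every $y \in E$ simultaneously satisfies $\#F(y) \geq \delta^\epsilon \#F$ and $\cH^\sigma_{\delta,\infty}(\pi_y(F(y))) > \delta^\eta$. Lemma~\ref{usefullemma2.5} then extracts, for each $y$, a $(\delta,\sigma)$-bush $\T^y$ of $\gtrsim \delta^{\eta - \sigma}$ tubes through $y$, each inherited from some $\T_x$ with $x \in F(y)$ and hence still carrying $\gtrsim \delta^{-\tau_0/2}$ points of $E$.

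Finally, set $\T := \bigcup_{y \in E} \T^y$ and $I := \sum_{T \in \T} \#(T \cap E)$. On the one hand $I \gtrsim \#\T \cdot \delta^{-\tau_0/2}$, since each tube is well-populated. On the other, Cauchy--Schwarz combined with the $(\delta,\sigma)$-bush bound $\#\{T \in \T^y : y' \in T\} \lesssim |y-y'|^{-\sigma}$ and the dyadic estimate $\sum_{y' \in E \setminus \{y\}} |y-y'|^{-\sigma} \lesssim \delta^{-s}$ (finite because $\sigma < s$) yields $\sum_T \#(T \cap E)^2 \lesssim I + \delta^{-2s}$. In the regime $I > \delta^{-2s}$ this immediately forces $\delta^{-\tau_0/2} \lesssim 1$, a contradiction; in the opposite regime one obtains $I \lesssim (\#\T)^{1/2} \delta^{-s}$, which combined with the tube lower bound gives $\#\T \lesssim \delta^{-2s + \tau_0}$, while the double count $I \geq \sum_y \#\T^y \gtrsim \delta^{-s + \epsilon + \eta - \sigma}$ gives $\#\T \gtrsim \delta^{2\epsilon + 2\eta - 2\sigma}$. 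Chaining these yields $\delta^{2\epsilon + 2\eta + 2(s-\sigma) - \tau_0} \lesssim 1$, and since $s - \sigma = \tau_0/1000$ while $\epsilon, \eta$ are chosen much smaller than $\tau_0$, the exponent is strictly negative, contradicting small $\delta$.

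The delicate point is arranging the $(\delta,\sigma)$-bush property \emph{uniformly} for every $y$ in the reduced set $E$. Without such uniformity, the Cauchy--Schwarz sum would pick up contributions from tubes belonging to bushes at other centers and produce a ``collinear triple'' term of order $\delta^{-3s}$ that would swamp the clean bound. The technical heart of the proof is therefore the iterative application of Theorem~\ref{discradial} to the bad set to show it lies strictly below the theorem's cardinality threshold, so that the good set on which the bush property holds is essentially all of $E$ after absorbing a $\delta^{O(\epsilon)}$ loss; this is the discrete analogue of removing the exceptional set in the continuous heuristic above.
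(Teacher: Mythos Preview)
Your overall strategy matches the paper's: assume the conclusion fails, cover each $E_x$ by a small bush $\cT_x$, swap the roles of $E$ and $F$ in Theorem~\ref{discradial} to produce for most $y\in E$ a $(\de,\si)$-bush $\T^y$ of tubes through $y$ pointing into $F$, and finish by an incidence/Cauchy--Schwarz count. The setup through the construction of $\T^y$ is fine (and essentially what the paper packages as Corollary~\ref{cor1}).

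The gap is in the final Cauchy--Schwarz step. You define $I=\sum_{T\in\T}\#(T\cap E)$ with $\T=\bigcup_y\T^y$, and claim
\[
\sum_{T\in\T}\#(T\cap E)^2 \;=\; I+\sum_{y\neq y'}\#\{T\in\T:y,y'\in T\}\;\lesssim\; I+\de^{-2s}
\]
via the bush bound $\#\{T\in\T^y:y'\in T\}\lesssim|y-y'|^{-\si}$. But $\#\{T\in\T:y,y'\in T\}$ is \emph{not} $\#\{T\in\T^y:y'\in T\}$: a tube $T\in\T^{y_0}$ with $y_0\notin\{y,y'\}$ can contain both $y$ and $y'$, and the $(\de,\si)$-property of $\T^{y_0}$ says nothing about how many such tubes there are. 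This is exactly the collinear-triple term you flag --- but your proposed remedy, ``arrange the bush property uniformly for every $y$,'' does not close it. Even if \emph{every} $y\in E$ carries a $(\de,\si)$-bush $\T^y$, the set $\{T\in\T:y\in T\}$ can be strictly larger than $\T^y$ and need not be a $(\de,\si)$-set; uniformity across centers does not control tubes belonging to foreign bushes that happen to pass through $y$. So the off-diagonal bound is unjustified, and with it the claimed contradiction.

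The paper resolves this by replacing the \emph{membership} count $\#(T\cap E)$ with the \emph{ownership} count $n_T:=\#\{y:T\in\T^y_j\}$ and defining $I(E',\T^h_j)=\sum_T n_T$. Then the second moment becomes $\sum_T n_T^2=\sum_{y,y'}\#(\T^y_j\cap\T^{y'}_j)$, and since $T\in\T^{y'}_j$ forces $y'\in T$, this is bounded by $\sum_{y,y'}\#\{T\in\T^y_j:y'\in T\}$, to which the bush estimate legitimately applies --- no foreign bushes enter. The price is that the lower bound $I\gtrsim\#\T\cdot\de^{-\tau_0/2}$ is no longer automatic: well-populated in $E$ (membership) does not imply $n_T\ge\de^{-\tau_0/2}$ (ownership). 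The paper bridges this with the high/low density dichotomy $\T^{y,h}_j:=\{T\in\T^y_j:n_T\ge\de^{-\tau_0/2}\}$, showing via the counter-assumption bushes $\cT_x$ (and \eqref{tx}) that the low-density contribution is negligible, and then separately proving $\#\T^h_j\gtrsim\de^{O(\eta+\e)}\de^{-s}2^{-j}\#F$ by a disjointness argument over a sparsified $E^*$. Your sketch has the right shape but is missing precisely this ownership/membership switch and the high-density refinement that makes it go through.
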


\bigskip

\begin{proof}[Proof that Proposition \ref{discliu} implies Proposition \ref{proliu}]
We will do a same reduction as in the proof that Theorem \ref{discradial} implies Theorem \ref{thmradial}. Suppose $E_0$ is given in Proposition \ref{proliu}, and $F_0$ is given by \eqref{defF}.
Fix $\dim E_0-\tau_0<s_1<\dim E_0$. We can find $s_1$-dense points $y_1,y_2$ of $E_0$. Since our problem is scaling-invariant, we can assume $|y_1-y_2|=99/100$. We let $E_1=A\cap B_{1/1000}(y_1)$, $E_2=A\cap B_{1/1000}(y_2)$, and then $\dim(E_1),\dim(E_2)\ge s_1$. We only need to show for any ball $B_{1/1000}$ of radius $1/1000$, $F_0\cap B_{1/1000}$ has dimension $\le k$. Since $\dist(E_1,E_2)>98/100$, either $\dist(B_{1/1000},E_1)>3/4$ or $\dist(B_{1/1000},E_2)>3/4$. We may assume $\dist(B_{1/1000},E_1)>3/4$. We will show that the set
$$ F':=\{x\in B_{1/1000}:\dim(\pi_x(E_1))<\dim E_1-9\tau_0\}(\supset F_0\cap B_{1/1000}) $$
has dimension $\le k$.
From the reduction, these sets satisfy certain separation properties: 
\begin{align}
   \textup{each~one ~of~} E_1 \textup{~and~} F' \textup{~lies~in~some~ball~of~radius~} 1/1000,\\  E_1,F'\subset B^n(0,1),\ \  \dist(E_1,F')\ge 1/2.
\end{align}

We choose $t<\dim(F'),s=\dim(E_1)-\tau_0<\dim(E_1)$. Then $\cH^t_{\infty}(F')>0$, and by Frostman's lemma there exists a probability measure $\nu_{E_1}$ supported on $E_1$ satisfying $\nu_{E_1}(B_r)\lesssim r^s$ for any $B_r$ being a ball of radius $r$.
We can rewrite $F'$ as
\begin{equation}\label{Fprime}
    F'=\{x\in B_{1/1000}:\dim(\pi_x(E_1))< s-8\tau_0\}
\end{equation}

We only need to prove $t\le k$, since then we can send $t\rightarrow \dim(F')$. For the sake of contradiction, assume that $t> k$. 
Now we fix $t$, so we may assume $\cH^t_{\infty}(F')\sim 1$ is a constant.

Fix an $x\in F'$.
Using Lemma \ref{usefullemma} to $\pi_x(E_1)$, we obtain a set of dyadic caps $\cC_x=\bigsqcup_j \cC_{x,j}$ in $\S^{n-1}$ that cover $\pi_x(E_1)$. Here each $\cC_{x,j}$ is a set of $2^{-j}$-caps that satisfy the $(s-8\tau_0)$-dimensional condition (see Lemma \ref{usefullemma} (3)) because of $\dim (\pi_x(E_1))<s-8\tau_0$. Also, the radius of these caps is less than $\e_\circ$, which is any given small number. 

By the $(s-8\tau_0)$-dimensional condition of $\cC_{x,j}$, we have
\begin{equation}\label{2radcontr0.1}
    \# \cC_{x,j}\le 2^{j(s-8\tau_0)}. 
\end{equation}

\begin{figure}
\begin{tikzpicture}[x=0.75pt,y=0.75pt,yscale=-1,xscale=1]

\draw   (242,73) .. controls (242,52.01) and (277.82,35) .. (322,35) .. controls (366.18,35) and (402,52.01) .. (402,73) .. controls (402,93.99) and (366.18,111) .. (322,111) .. controls (277.82,111) and (242,93.99) .. (242,73) -- cycle ;
\draw   (243,192) .. controls (243,171.01) and (278.82,154) .. (323,154) .. controls (367.18,154) and (403,171.01) .. (403,192) .. controls (403,212.99) and (367.18,230) .. (323,230) .. controls (278.82,230) and (243,212.99) .. (243,192) -- cycle ;
\draw  [dash pattern={on 4.5pt off 4.5pt}] (255,73) .. controls (255,59.19) and (266.19,48) .. (280,48) .. controls (293.81,48) and (305,59.19) .. (305,73) .. controls (305,86.81) and (293.81,98) .. (280,98) .. controls (266.19,98) and (255,86.81) .. (255,73) -- cycle ;
\draw   (274,98.33) -- (286,98.33) -- (286,240) -- (274,240) -- cycle ;
\draw   (342.08,227.02) -- (330.8,231.46) -- (286,98.33) -- (297.28,93.9) -- cycle ;
\draw   (280,98.33) -- (291.82,96.08) -- (314.57,235.59) -- (302.75,237.85) -- cycle ;

\foreach \Point in {(278,73), (280,213.5), (305,215.5), (280,188.5), (300,189.5), (327.5,204)}{
    \node at \Point {\textbullet};
}

\draw (408,47.4) node [anchor=north west][inner sep=0.75pt]    {$F'$};
\draw (409,200.4) node [anchor=north west][inner sep=0.75pt]    {$E_1$};
\draw (282,67.4) node [anchor=north west][inner sep=0.75pt]    {$x$};

\end{tikzpicture}
\caption{$\T_{x,j}$ in the radial projection}
\label{radialproj2}
\end{figure}
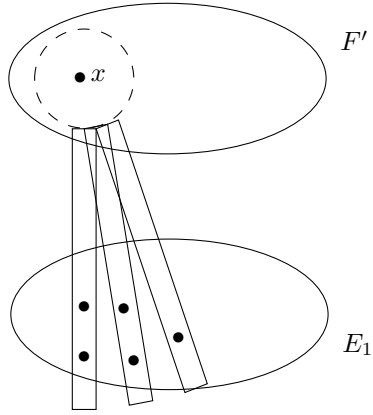

For each cap $C\in\cC_x$, consider $\pi_x^{-1}(C)\cap \{x\in\R^n:1-\frac{1}{100}\le|x-y|\le 1\}$ which is a tube. We obtain a collection of finitely overlapping tubes
$$ \T_x=\bigsqcup_j \T_{x,j} $$
that cover $E_1$ (see Figure \ref{radialproj2}).
Here, each tube has its coreline passing through $x$ and at distance $\sim 1$ from $x$. The tubes in $\T_{x,j}$ have dimensions $2^{-j}\times\dots\times2^{-j}\times 1$.
Also, $\T_{x,j}$ inherits the property \eqref{2radcontr0.1} from $\cC_{x,j}$:
\begin{equation}\label{radcontr2}
    \#\T_{x,j}\le 2^{j(s-8\tau_0)}
\end{equation}

For a fixed $x\in F'$, there exists a $j(x)\ge |\log_2\e_\circ|$ such that 
\begin{equation}\label{2radpigeon1}
    \nu_{E_1}\left(E_1\cap \bigcup_{T\in\T_{x,j(x)}}T\right)\ge \frac{1}{10j(x)^2}\nu_{E_1}(E_1)=\frac{1}{10j(x)^2}.
\end{equation}
We have a partition $F'=\bigsqcup_j F'_j$ where
$F'_j=\{x\in F': j(x)=j\}.$ We choose $j$ such that $\cH^t_\infty(F'_j)\gtrsim \frac{1}{j^2}$.
We let $\de=2^{-j}$. Note that $\de \leq \e_\circ$ by assumption. By Lemma \ref{frostmans}, there exists a subset $F''\subset F'_j$ which is a $(\de,t)$-set and $\# F''\gtrsim |\log\de|^{-2}\de^{-t}$. We use $\mu$ to denote the counting measure on $F''$.

Next, we consider the set $S=\{(y,x)\in E_1\times F'': y\in\bigcup_{T\in\T_{x,j}}T\}$. We also denote the $x$-section and $y$-section of $S$ by $S^x$ and $S_y$. (In Figure \ref{radialproj2}, $F'$ is drawn above $E_1$, so we use the convention that $x$ appears as the superscript in $S^x$.) By \eqref{2radpigeon1}, we have $\nu_{E_1}(S^x)\ge \frac{1}{10j(x)^2}$, so we have
\begin{equation}\label{2radpigeon3}
    (\nu_{E_1}\times \mu)(S)\ge \frac{1}{10j^2}\mu(F'').
\end{equation}
This implies 
\begin{equation}\label{2radpigeon4}
    (\nu_{E_1}\times \mu)\bigg(\Big\{(y,x)\in S: \mu (S_y)\ge \frac{1}{20j^2}\mu(F'')\Big\} \bigg)\ge \frac{1}{20j^2}\mu(F'').
\end{equation}
Therefore, we have
\begin{equation}\label{2radpigeon5}
    \nu_{E_1}\bigg(\Big\{y\in E_1: \mu (S_y)\ge \frac{1}{20j^2}\mu(F'')\Big\} \bigg)\ge \frac{1}{20j^2}\sim |\log\de|^{-2}.
\end{equation}
By Lemma \ref{frostmans3}, we can find a subset $E$ of $\Big\{y\in E_1: \mu (S_y)\ge \frac{1}{20j^2}\mu(F')\Big\}$, so that $E$ is a $(\de,s)$-set and $\#E\gtrsim |\log\de|^{-2}\de^{-s}$.

Hence,
\begin{equation}
   |\log\de|^{-2}\#E\#F'' \lesssim\#\left\{ (y,x)\in E\times F'': y\in \bigcup_{T\in\T_{x,j}} T \right\}=\sum_{x\in F''}\#\left\{y\in E: y\in \bigcup_{T\in\T_{x,j}} T \right\}.
\end{equation}
By pigeonholing, there exists a subset $F\subset F''$ with $\# F\gtrsim |\log\de|^{-2}\#F''\gtrsim \de^{\e/2} \de^{-t}$, so that for any $x\in F$:
$$ \#\{y\in E: y\in \bigcup_{T\in\T_{x,j}} T \}\gtrsim \de^\e\# E. $$
We set $E_x:=\{y\in E: y\in \bigcup_{T\in\T_{x,j}} T \}$.

Now we use Proposition \ref{discliu} to derive a contradiction. We just plug in the $E,F$ and check they satisfy the conditions of Proposition \ref{discliu}. Then it yields the existence of an $x\in F$ such that $|\pi_x(E')|_\de\ge \de^{-s+\tau_0}$, for any $E'\subset E$ with $\#E'\ge \de^\e\#E$. We just put $E'=E_x$, and see that $\de^{-s+\tau_0}\le |\pi_x(E_x)|_\de\lesssim \#\T_{x,j}\le \de^{-s+8\tau_0}$ by \eqref{radcontr2}.
This gives a contradiction if $\de$ is small enough depending on $\tau_0$.

\end{proof}

\begin{remark}
{\rm
\eqref{liubigpoint} roughly says there exists $x\in F$ such that $\dim(\pi_x(E))> \dim E-\tau_0$, contradicts the definition of $F_0$ in \eqref{defF}.
Throughout the proof of Proposition \ref{discliu}, we will use $x$ to denote points in $F$ and $y$ to denote points in $E$.
}
\end{remark}

It remains to prove Proposition \ref{discliu}. 

\subsection{Proof of Proposition \ref{discliu}}
We provide the full details for the proof of Proposition \ref{discliu}. We remark that the proof has the same idea as in \cite{orponenshmerkin2022exceptional}. We include here just for completeness.

In \cite{orponenshmerkin2022exceptional}, Orponen and Shmerkin derive their Corollary 4.5 from Proposition 4.2. By the same argument, we can derive the following corollary from Theorem \ref{discradial}.

\begin{corollary}\label{cor1}
Let $0\le \si\le s< k$, $t\in(k,k+1]$, $\eta>0$ very small, and $s>\max\{k+\si-t,0\}$. Then, for sufficiently small $\e,\de$ depending on $s,\si,t,\eta$, the following holds.

Let $E,F\subset B^n(0,1)$ be $(\de,s)$-set and $(\de,t)$-set, with $\#E\gtrsim \de^{-s+\e}$ and $\#F\gtrsim \de^{-t+\e}$. Each of $E$ and $F$ lies in a ball of radius $1/1000$ and $\dist(E,F)\ge 1/2$. Then, there exists a subset $E'\subset E$ with $\#E'\ge (1-\de^\e)\# E,$ and for every point $y\in E'$, there exist disjoint families of $\de$-tubes $\T^y=\T_1^y\sqcup\dots\sqcup\T_L^y$ (where $L=3\log(1/\de)$, and some $\T_j^y$ may be empty), with the following properties:
\begin{enumerate}[label=(\roman*)]\label{cor}
    \item\label{1} The tubes in $\T^y$ form a bush centered at $y$.
    \item\label{2} Each $\T^y_j$, if non-empty, can be writen as $\T^y_j=\sqcup_i \T^y_{j,i}$, where each $\T^y_{j,i}$ is a $(\de,\si)$-set with cardinality $\gtrsim \de^{-\si+\eta}$.
    \item\label{3} $\#(T\cap F)\sim 2^j$, for $T\in\T^y_j$.
    \item\label{4} $\T^y_j$ is either empty, or $\#(F\cap \bigcup_{T\in\T^y_j}T)\ge \de^{2\e}\#F$ in which case $\#\T^y_j\ge \de^{2\e}2^{-j}\#F$; we also trivially have $\#\T^y_j\le 2^{-j}\#F$ by (iii).
    \item\label{5} $\#(F\cap \bigcup_{T\in\T^y} T)\ge (1-\de^\e)\#F$.
    
\end{enumerate}
\end{corollary}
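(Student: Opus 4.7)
The plan is to derive Corollary \ref{cor1} from Theorem \ref{discradial} by a standard iteration-plus-pigeonholing argument, mirroring the way Orponen and Shmerkin derive their Corollary 4.5 from Proposition 4.2 in \cite{orponenshmerkin2022exceptional}. First I would upgrade Theorem \ref{discradial} from one good point to a positive-density set: extract a good $y$ via Theorem \ref{discradial} applied with parameter $2\e$, delete it from $E$, and reapply as long as the residual is still a $(\de,s)$-set with $\gtrsim \de^{-s+2\e}$ points, exactly as in the deletion iteration flagged in the remark after Proposition \ref{1prop}. The resulting subset $E' \subset E$ satisfies $\#E' \geq (1-\de^\e)\#E$, and every $y \in E'$ enjoys the strengthened property
\[
 \cH^\si_{\de,\infty}(\pi_y(F')) > \de^\eta \qquad \textup{for every } F' \subset F \textup{ with } \#F' \geq \de^{2\e} \#F.
\]

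Next, fix $y \in E'$ and build the bush. Choose a maximal $\de$-separated set of directions in $\pi_y(F)$ and let the corresponding truncated tubes $T_\omega = \pi_y^{-1}(B_\de(\omega)) \cap \{1/2 \leq |x-y| \leq 1\}$ form an essentially disjoint bush $\T^y$ centered at $y$ covering $F$ (using $\dist(E,F) \geq 1/2$), giving (i). Partition $\T^y$ into $L = \lceil 3\log_2(1/\de)\rceil$ dyadic multiplicity layers
\[
 \T^y_j := \{T \in \T^y : 2^j \leq \#(T \cap F) < 2^{j+1}\}, \qquad j = 0, 1, \dots, L-1,
\]
which is exhaustive because $\#F \lesssim \de^{-n}$; this is (iii). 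Finally, discard any layer for which $F_j := F \cap \bigcup_{T \in \T^y_j} T$ has fewer than $\de^{2\e}\#F$ points: the total discarded $F$-mass is then at most $L \de^{2\e}\#F \leq \de^\e \#F$ for $\de$ small, which secures (v); and for the surviving layers the pigeonhole bound $\#\T^y_j \geq \#F_j/2^{j+1} \gtrsim \de^{2\e}2^{-j}\#F$ gives (iv).

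For each surviving layer I would produce the internal decomposition of (ii) using the good-$y$ property. Since $\#F_j \geq \de^{2\e}\#F$ matches the strengthened threshold of Step 1, we get $\cH^\si_{\de,\infty}(\pi_y(F_j)) > \de^\eta$. Lemma \ref{usefullemma2.5} then extracts a $(\de,\si)$-subset of $\pi_y(F_j) \subset \S^{n-1}$ of cardinality $\gtrsim \de^{-\si+\eta}$, which lifts through the cap--tube correspondence to a $(\de,\si)$-subfamily $\T^y_{j,1} \subset \T^y_j$ of the same cardinality. Removing $\T^y_{j,1}$ and iterating as long as the remaining tubes still cover $\geq \de^{2\e}\#F$ points of $F$ yields the decomposition $\T^y_j = \bigsqcup_i \T^y_{j,i}$; any final residual family covers $\leq \de^{2\e}\#F$ points and is harmlessly absorbed into the loss already accounted for in (v).

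The main obstacle is the numerology of the three nested iterations. One must ensure simultaneously that (a) the deletion iteration in Step 1 terminates with a residual smaller than $\de^\e \#E$ before the cardinality hypothesis of Theorem \ref{discradial} breaks; (b) the accumulated losses across Steps 2--3 --- the $O(L)$ dropped layers plus the residuals of the iteration for (ii) --- together lose only $\leq \de^\e \#F$ of the $F$-mass; and (c) the goodness threshold of Step 1 is at most $\de^{2\e}$ so that every surviving layer is eligible for Lemma \ref{usefullemma2.5}. All three are arranged as in \cite{orponenshmerkin2022exceptional} by taking $\e$ very small depending on $\eta, \si, s, t$ and invoking Theorem \ref{discradial} with parameter $2\e$ throughout.
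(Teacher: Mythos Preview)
Your proposal is essentially correct and follows the same two-stage architecture as the paper: first iterate Theorem \ref{discradial} (with parameter $2\e$) to produce $E'$ with the uniform content lower bound \eqref{goody2}, then for each $y\in E'$ build the tube families by repeatedly applying Lemma \ref{usefullemma2.5} to residuals.

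There is one structural difference worth noting. You partition the full bush into dyadic multiplicity layers $\T^y_j$ \emph{once}, with respect to the original $F$, and then run the $(\de,\si)$-extraction iteration separately inside each surviving layer. The paper instead runs a single global iteration: at each step it re-partitions the tubes by $\#(T\cap F')$ for the \emph{current} residual $F'$, pigeonholes to find a good $j$, and extracts one $(\de,\si)$-piece which it files into $\T^y_j$. Your ordering has the advantage that property (iii) with the two-sided bound $\#(T\cap F)\sim 2^j$ is immediate, whereas in the paper's construction the index $j$ is assigned relative to a shrinking $F'$, so a priori one only gets $\#(T\cap F)\gtrsim 2^j$. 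Conversely, the paper's single global residual makes the bookkeeping for (v) slightly cleaner.

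One small gap to patch: after you discard the residual tubes at the end of the inner iteration for layer $j$, you need (iv) to survive, i.e.\ $\#(F\cap\bigcup\T^y_j)\ge\de^{2\e}\#F$. Since the residual tubes can cover up to $\de^{2\e}\#F$ points, a layer that barely passed the survival threshold $\#F_j\ge\de^{2\e}\#F$ could end up with $\#(F\cap\bigcup\T^y_j)$ close to zero. The fix is trivial (take the survival threshold to be $2\de^{2\e}\#F$, or replace $\e$ by $\e/2$ in the appropriate places), and you clearly anticipate this kind of issue in your final paragraph, but it should be made explicit.
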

\begin{proof}[Proof of Corollary \ref{cor1}]
    We will apply Theorem \ref{discradial}. Since there are many parameters, to make less confusion, we denote the parameters appeared in Theorem \ref{discradial} by $\si(\Thm),t(\Thm),a(\Thm),E(\Thm),F(\Thm)$. And we write the parameters appeared in Corollary \ref{cor1} in the usual way as $\si, s,t,E,F$. 
    
    We first talk about the idea. To apply Theorem \ref{discradial}, we let $\si(\Thm)=\si, t(\Thm)=s,a(\Thm)=t$, and   $E(\Thm)=E,F(\Thm)=F$. We can check that the conditions in Theorem \ref{discradial} are satisfied. As a result, there exists $y\in E$ such that for all $F'\subset F$ with $\#F'\ge\de^{2\e}\#F$ (it is harmless to use $2\e$ instead of $\e$), we have
    \begin{equation}\label{goody}
       \cH^\si_{\de,\infty}(\pi_y(F'))>\de^{\eta/2}.
    \end{equation}
 
We will iteratively use Theorem \ref{discradial} to obtain a lot of $y$ that satisfies \eqref{goody}, we will let $E'$ to be the set of these $y$'s and our $\T^y$ will be constructed using \eqref{goody}.

We talk about the details. Suppose we have obtained $\{y_1,\dots,y_N\}$ such that \eqref{goody} is true for each of these $y_i$. If $N<(1-\de^\e)\#E$, then we let $E(\Thm)=E\setminus\{y_1,\dots,y_N\}$. We see that $\#E(\Thm)>\de^{-s+2\e}$. If we let $\e(\Thm)=2\e$, then we can apply Theorem \ref{discradial} and obtain $y_{N+1}$ that satisfies \eqref{goody}. By iteration, we obtain $E'\subset E$ with $\#E'\ge (1-\de^\e)\#E$ such that for each $y\in E'$, we have: if $F'\subset F$ with $\#F'\ge \de^{2\e} \#F$, then
\begin{equation}\label{goody2}
    \cH^\si_{\de,\infty}(\pi_y(F'))>\de^{\eta/2}. 
\end{equation} 

Our next step is to construct $\T^y=\sqcup_{1\le j\le L} \T^y_j$ for each $y\in E'$. We fix a $y\in E'$ in the rest of proof. The idea is to iteratively use \eqref{goody2}.

We first choose a set of $\de$-caps $\cC=\{C\}\subset \S^{n-1}$ that forms a partition of $\S^{n-1}$. For each cap $C$, let $T$ be a $\de$-tube that passes through $y$ and points to direction $C$. 
In this way, $\cC$ naturally corresponds to $\T$ which is a full bush centered at $y$. The reader can check $\#\T\sim \de^{-(n-1)}$. Our $\T^y$ will be constructed as a subset of $\T$.

We first let $F'=F$, and then of course $\#F'\ge \de^{2\e}\#F$, so we have \eqref{goody2}. Let $\T'\subset \T$ be the tubes that intersect $F'$, then \eqref{goody2} is equivalent to saying that $\T'$ satisfies that \[\cH^\si_{\de,\infty}(\pi_y(\cup_{T\in\T'}T))>\de^{\eta/2}.\]
For $1\le j\le L=3\log(1/\de)$, define
\[ \T'_j=\{T\in \T': \#(T\cap F')\sim 2^j\}. \]
We obtain a partition $\T'=\sqcup_j \T'_j$.
By pigeonholing, there exists $j$ such that
\[ \cH^\si_{\de,\infty}(\pi_y(\cup_{T\in\T'_j}T))\gtrsim \de^{\eta}. \]
By Lemma \ref{usefullemma2.5}, we obtain a subset $\T^y_{j,1}\subset \T'_j$, so that $\T^y_{j,1}$ is a $(\de,\si)$-set with cardinality $\gtrsim \de^{-\si+\eta}$. Next, we let $F'=F\setminus \bigcup_{T\in\T^y_{j,1}}T$ and then check whether $\#F'\ge\de^{2\e}\#F$. If not, we stop. If yes, we repeat the argument above and obtain $\T^y_{j',1}$ or $\T^y_{j,2}$.

Suppose that we have obtained $\T^y_j$ $(j=1,\dots,L)$, where each $\T^y_j=\sqcup_{1\le i\le i(j)}\T^y_{j,i}$. Also, each $\T^y_{j,i}$ satisfies \ref{2}, and each $T\in\T^y_{j}$ satisfies \ref{3}.
We let 
\[F'=F\setminus \bigcup_{T\in \cup_j\T^y_j}T.\]
If $\#F'\ge \de^{2\e}\#F$, then we repeat the argument and obtain some $\T^y_{j,i(j)+1}$. We redefine $\T^y_j$ to be the disjoint union $\T^y_{j,i(j)+1}\sqcup \T^y_j$, and redefine $i(j)$ to be $i(j)+1$.
If $\#F'<\de^{2\e}\#F$, then we stop.

Suppose we stop. For the purpose of \ref{4}, define the significant set of $j$ to be
\[ J=\{j: \#(F\cap \bigcup_{T\in\T^y_j}T)\ge \de^{2\e}\#F \}. \]

We throw away those $\T^y_j$ for $j\notin J$, and let $\T^y=\bigsqcup_{j\in J}\T^y_j$. Finally, we check \ref{5}. We note that
\[ \#(F\cap \bigcup_{T\in\T^y}T)= \#F-\#(F\setminus \bigcup_{T\in\cup_j\T^y_j}T)-\sum_{j\notin J}\#(F\cap \cup_{T\in\T^y_j}T ). \]
This is bounded from below by $\#F-\de^{2\e}\#F-3\log(1/\de)\de^{2\e}\#F\ge (1-\de^\e)\#F$, when $\de$ is small enough.

\end{proof}

Let us return to the proof of Proposition \ref{discliu}. Since
$$ s>\max\{k+s-t,0\}, $$
we can apply Corollary \ref{cor} with $\si:=s$.
We obtain a set $E'\subset E$ with $\#E'\ge (1-\de^{4\e})\#E$, and for all $y\in E'$ the tubes $\T^y=\T_1^y\sqcup\dots\sqcup\T_L^y$ ($L=3\log(1/\de)$) satisfying the properties in Corollary \ref{cor}. $\T^y$ is a bush of tubes centered at $y$. Next, we will estimate the number of pairs $(y,x)\in E'\times F$ that satisfy certain properties. To make the expression easier, for any set of tubes $\T'$, we write $\bigcup \T':= \bigcup_{T\in\T'}T$. 


By \ref{5}, we have
\begin{equation}\label{incidence0}
    \#\{ (y,x)\in E'\times F: x\in \bigcup\T^y \}=\sum_{y\in E'}\#\left(F\cap \bigcup\T^y\right)\ge(1-\de^{4\e})\#E'\#F.
\end{equation} 

Now, we make a counter assumption: \eqref{liubigpoint} fails for all $x\in F$. Thus for every $x\in F$, there exists a subset $E_x'\subset E$ such that $\#E_x'\ge \de^\e\#E$, and
\begin{equation}\label{cardiproj}
    |\pi_x(E'_x)|_\de<\de^{-s+\tau_0}.
\end{equation}
Since $\#E'\ge (1-\de^{4\e})\#E$, we have $\#(E'_x\cap E')\gtrsim \de^\e\#E$. We may assume $E'_x\subset E'$ by replacing $E'_x$ with $E'_x\cap E'$. For each $x\in F$, we choose a bush $\Tau_x$ centered at $x$, consisting of $\de$-tubes, so that $\Tau_x$ covers $E'_x$ and 
\begin{equation}\label{tx}
    \#\Tau_x=|\pi_x(E_x')|_\de<\de^{-s+\tau_0}.
\end{equation}
We immediately have
\begin{equation}\label{incidence1}
    \#\{ (y,x)\in E'\times F: y\in \bigcup\cT_x \}=\sum_{x\in F}\#\left(E'\cap\bigcup\Tau_x\right) \ge \de^\e \#E'\#F.
\end{equation}
The inequalities \eqref{incidence0} and \eqref{incidence1} together imply
\begin{equation}\label{incidence2}
    \#\{(y,x)\in E'\times F:x\in \bigcup\T^y,y\in \bigcup\cT_x \}\ge (\de^\e-\de^{4\e})\#E'\#F.
\end{equation}

By pigeonholing, there exists a $j$ such that
\begin{equation}\label{incidence3}
    \#\{(y,x)\in E'\times F:x\in \bigcup\T_j^y,y\in \bigcup\cT_x \}\gtrsim \de^{2\e}\#E'\#F.
\end{equation}

Next, we introduce the high-density  tubes:
\begin{equation}\label{tjyh}
     \T_j^{y,h}:=\{T\in\T_j^y: \#\{y\in E': T\in\T_j^y\}\ge \de^{-\tau_0/2}\}. 
\end{equation}
Also define the low-density tubes $\T_j^{y,l}:=\T_j^y\setminus \T_j^{y,h}$.
We want to show that
\begin{equation}\label{incidence4}
    \#\{(y,x)\in E'\times F:x\in \bigcup\T_j^{y,h},y\in \bigcup\cT_x \}\gtrsim \de^{2\e}\#E'\#F.
\end{equation}
To show this, it suffices to show
\begin{align}
    \nonumber&\#\{(y,x)\in E'\times F:x\in \bigcup\T_j^{y,l},y\in \bigcup\cT_x \}\\
    \label{lowtube}=&\sum_{x\in F}\#\left\{y\in E': x\in \bigcup\T_j^{y,l},y\in \bigcup\cT_x\right\}\lesssim \de^3\e \#E'\#F.
\end{align}
For fixed $x\in F$, we note that if $y\in\left\{y\in E': x\in \bigcup\T_j^{y,l},y\in \bigcup\cT_x\right\}$, then there exists $T\in \T_j^{y,l}\cap \cT_x$ such that $x,y\in T$. Therefore, we can bound \eqref{lowtube} by
\[ \le \sum_{x\in F}\sum_{T\in\cT_x}\#\{y\in E': T\in \T_j^{y,l}\}. \]
By the definition of $\T^{y,l}_j$, we see that if $T\in \T^{y,l}_j$ for some $y$, then $\#\{y\in E': T\in \T^y_j\}< \de^{-\tau_0/2}$. Therefore, we bound the inequality above by
\[ \lesssim \sum_{x\in F} \#\cT_x \de^{-\tau_0/2}\lesssim \#F\de^{-s+\tau_0/2}\lesssim \de^{3\e}\#E'\#F, \]
if $\e$ is small enough depending on $\tau_0$.
This prooves \eqref{lowtube} and hence \eqref{incidence4}.

Next, we show that there exists $E''\subset E'$ with $\#E''\gtrsim \de^{2\e} \#E'$, such that for $y\in E''$: 
$$\#\T_j^{y,h}\ge \de^{2\e} \#\T_j^y.$$
Note that
\begin{align*}
    \de^{2\e} \#E'\#F\lesssim \#\{(y,x)\in E'\times F:x\in \bigcup\T_j^{y,h},y\in \bigcup\cT_x \}\\
    =\sum_{y\in E'}\#\{x\in F:x\in \bigcup\T_j^{y,h},y\in \bigcup\cT_x \}. 
\end{align*} 
By pigeonholing, we can choose $E''\subset E'$ with $\#E''\gtrsim \de^{2\e}\#E'$ so that for $y\in E''$, 
\[\#\{x\in F:x\in \bigcup\T_j^{y,h},y\in \bigcup\cT_x \}\gtrsim \de^{2\e} \#F.\]
Since $\T_j^{y,h}\subset \T_j^y$
and each $T\in\T_j^y$ satisfies $\#(F\cap T)\sim 2^j$, 
we have 
\[\#\{x\in F:x\in \bigcup\T_j^{y,h},y\in \bigcup\cT_x \}\le \#(F\cap \bigcup\T_j^{y,h}) \sim 2^j \#\T_j^{y,h},\]
which implies for $y\in E''$,
\begin{equation}\label{tyhbigty}
    \#\T_j^{y,h}\gtrsim \de^{2\e} 2^{-j}\#F\gtrsim \de^{2\e} \#\T_j^y.
\end{equation}

We define $\T_j^{h}=\cup_{y\in E''}\T^{y,h}_j$. Since we will consider the interplay among $\{\T^{y,h}_j\}_{y\in E''}$, we make the convention: If two tubes $T,T'$ lie within the $\de$-neighborhood of the same line, then we treat $T,T'$ as the same tube. One good example to keep in mind is that: Suppose $x,x'\in F, y\in E'$ with $x,x',y$ lying on the same line. If $T$ contains $x,y$ and $T'$ contains $x',y$, then we identify $T,T'$ as the same tube. As a result, the tubes from different $\T^{y,h}_j$ that are identified as the same tube will appear once in $\T^h_j$. This potentially could result in that $\#\T^y_j$ can be much smaller than $\sum_{y\in E''} \#\T^{y,h}_j$. However, we claim the following estimate
\begin{equation}\label{cl}
    \#\T^{h}_j \gtrsim \de^{O(\eta+\e)}\sum_{y\in E''}\#\T^{y,h}_j. 
\end{equation}
We prove the claim.
Recall that $\T^y_j=\sqcup_{i=1}^{i(y)} \T^y_{j,i}$ where each $\T^y_{j,i}$ is a $(\de,s)$-set with cardinality $\gtrsim \de^{-s+\eta}$ (see \ref{2}). Here $i(y)$ is the number that indicates the cardinality of $\T^y_j$:
\begin{equation}\label{cardofT}
    i(y)\de^{-s+\eta}\lesssim \T^y_j\lesssim i(y)\de^{-s}.
\end{equation}

Note that $E''$ is a $(\de,s)$-set with $\#E''\gtrsim \de^{O(\e)-s}$. By pigeonholing, we can find $E^\circ\subset E''$ with $\#E^\circ\gtrsim \de^{O(\e)-s}$, such that $\#\T_j^{y,h}$ are comparable for all $y\in E^\circ$ and
\[ \sum_{y\in E''}\#\T^{y,h}_j\lesssim \de^{-\e}\sum_{y\in E^\circ}\#\T^{y,h}_j. \]
Now that $E^{\circ}$ is a $(\de,s)$-set with $\#E^{\circ}\gtrsim \de^{O(\e)-s}$, by Lemma \ref{usefullemma3},
we can choose a $(\de\de^{-C(\e+\eta)},s)$-set $E^*\subset E^\circ$ with $\#E^*\gtrsim \de^{O(\e+\eta)-s}$ ($C$ is some large number to be determined later). We have
\begin{equation}\label{phargument}
    \sum_{y\in E''}\#\T^{y,h}_j\lesssim \de^{-O(\e+\eta)}\sum_{y\in E^*}\#\T^{y,h}_j.
\end{equation}

We are ready to estimate the lower bound of $\#T_j^h$.
We have
\begin{align}
    \label{furstenberg}\# \T_j^{h} \ge \# \left(\bigcup_{y\in E^*} \T_j^{y,h}\right) &\geq \#\left(\bigcup_{y\in E^*} \left(\T_j^{y,h}\setminus \bigcup_{y'\in E^*\setminus \{y\}} \T_j^{y',h}\right)\right) \\
    (\textup{by~the~disjointness})&= \sum_{y\in E^*} \#\left(\T_j^{y,h} \setminus \bigcup_{y'\in E^*\setminus\{y\}} \T_j^{y',h}\right) \\
    &\ge \sum_{y\in E^*} \#\left(\T_j^{y,h} \setminus \bigcup_{y'\in E^*\setminus\{y\}} \T_j^{y'}\right) \\
    &\ge \sum_{y\in E^*} \bigg(\# \T_j^{y,h} - \sum_{y'\in E^*\setminus\{y\}}\#\left(\T_j^{y'} \cap  \T_j^{y}\right)\bigg).
\end{align}
We show that
\begin{equation}\label{biggeronehalf}
    \# \T_j^{y,h} - \sum_{y'\in E^*\setminus\{y\}}\#\left(\T_j^{y'} \cap  \T_j^{y}\right)\ge \frac{1}{2}\#\T_j^{y,h}. 
\end{equation} 
For fixed $y,y'$, we want to find an upper bound for 
\[\#\left(\T_j^{y} \cap  \T_j^{y'}\right).\] 
This is less than
\[ \#\{ T\in \T_j^{y}: y'\in T \}.  \]  Since $\T^{y}_j=\sqcup_{i=1}^{i(y)} \T^{y}_{j,i}$ where each $\T^{y}_{j,i}$ is a $(\de,s)$-set,       we have 
\begin{align*}
    \#\{ T\in \T_j^{y}: y'\in T \}\le \sum_{i=1}^{i(y)}\#\{ T\in \T_{j,i}^{y}: y'\in T \}\lesssim i(y) |y-y'|^{-s}.
\end{align*}
So, we have
\begin{align*}
    \sum_{y'\in E^*\setminus\{y\}}\#(\T_j^{y}\cap\T_j^{y'})&\lesssim i(y)\sum_{y'\in E^*\setminus\{y\}} |y-y'|^{-s}\\
    &=i(y)\sum_{\de\de^{-C(\e+\eta)}\le d\le 1}\ \ \sum_{y'\in E^*, |y-y'|\sim d} d^{-s}.
\end{align*} 
Here the summation over $d$ is over dyadic numbers.
Since $E^*$ is a $(\de\de^{-C(\e+\eta)},s)$-set, we have $\#(E^*\cap B_d(y))\lesssim (\frac{d}{\de\de^{-C(\e+\eta)}})^{s}$, the expression above is bounded by
\begin{align}\label{abo}
    &\lesssim i(y)\sum_{\de\de^{-O(\e+\eta)}\le d\le 1} \de^{sC(\e+\eta)}(\frac{d}{\de})^s d^{-s}\lesssim i(y)\de^{-s}\de^{sC(\e+\eta)}|\log\de|.
\end{align}
On the other hand, by \eqref{tyhbigty} and \eqref{cardofT}, we get $\#\T^{y,h}_j\gtrsim \de^{2\e}\#\T^{y}_j\gtrsim i(y)\de^{2\e+\eta-s}$. Therefore, if $C$ is large enough, then the right hand side of \eqref{abo} is $\le \frac{1}{2}\#\T^{y,h}_j$. So, we proved \eqref{biggeronehalf}.
If we look back to \eqref{furstenberg}, we obtain 
\[ \#\T^h_j\ge \frac{1}{2}\sum_{y\in E^*}\#\T^{y,h}_j. \]
Combining with \eqref{phargument}, we proved the claim \eqref{cl}.

\medskip

Estimating the right hand side of \eqref{cl} using \eqref{tyhbigty} and \ref{4}, we obtain
\begin{equation}\label{lowerbound}
    \#\T_j^{h}\gtrsim \de^{O(\eta+\e)}\de^{-s}2^{-j}\#F.
\end{equation}

Finally, we estimate $I(E',\T_j^{h}):=\{(y,T)\in E'\times \T_j^h: T\in \T_j^y\}$.
Recalling $\T^h_j=\cup_{y\in E''} \T_j^{y,h}$ and the definition of $\T_j^{y,h}$ in \eqref{tjyh}, we have the lower bound
\begin{equation}\label{lowerbound2}
    I(E',\T_j^{h})= \sum_{T\in \T_j^h}\#\{y\in E': T\in \T^y_j\}\ge \#\T_j^{h} \de^{-\tau_0/2}.
\end{equation} 
We have the upper bound
\begin{align*}
    I(E',\T^{h}_j)&\le (\#\T_j^{h})^{1/2}\left( \sum_{T\in \T^{h}_j}\#\{y\in E':T\in \T_j^y\}^2 \right)^{1/2}\\
    &=(\#\T_j^{h})^{1/2}\left( \sum_{y, y'\in E'}\#\{T\in \T_j^h: T\in \T^y_j\cap \T^{y'}_j\} \right)^{1/2}\\
    &=(\#\T_j^{h})^{1/2}\left( \sum_{y\neq y'\in E'}\#\{T\in \T_j^h: T\in \T^y_j\cap \T^{y'}_j\}+I(E',\T_j^{h}) \right)^{1/2}
\end{align*}
Note that $\#\{T\in \T_j^h: T\in \T^y_j\cap \T^{y'}_j\}\le\#\{T\in \T_j^y:y'\in T\}$,
and by \ref{2} (with $\si=s$), it further has bound $\lesssim \de^{-O(\eta)} (\de/|y-y'|)^s\#\T_j^y.$ Here, we used the fact that the tubes in $\{T\in \T^y_j: y'\in T\}$ are contained in a $(\de/|y-y'|)$-tube. So, 
\begin{align*}
    \#\{T\in \T^y_j: y'\in T\}=\sum_i \#\{T\in \T_{j,i}^y:y'\in T\}\lesssim \sum_i (1/|y-y'|)^s\\
    \lesssim \sum_i (1/|y-y'|)^s \de^{-\eta+s}\#\T_{j,i}^y=\de^{-\eta}(\de/|y-y'|)^s\#\T_j^y. 
\end{align*} 

We see that
\begin{align*}
    \sum_{y\neq y'\in E'}\#\{T\in \T_j^y:T\in \T^y_j\cap \T^{y'}_j\}
    \lesssim \de^{-O(\eta)}\sum_{y\neq y'\in E'}\left(\frac{\de}{|y-y'|}\right)^s\#\T_j^y.
\end{align*}
Noting that from \ref{4} that $\#\T_j^y\le 2^{-j}\#F$ and 
\[ \sum_{y\neq y'\in E'}\left(\frac{\de}{|y-y'|}\right)^s=\sum_{y\in E'}\sum_{y'\in E'\setminus \{y\}} \left(\frac{\de}{|y-y'|}\right)^s\lesssim \log \de^{-1}\#E'\lesssim \de^{-\e}\de^{-s}, \]
we obtain
\[ I(E',\T^h_j)\lesssim (\#\T^h_j)^{1/2}\left( \de^{-O(\e+\eta)} \de^{-s}2^{-j}\#F+I(E',\T^h_j)\right)^{1/2}. \]

This implies:
$$ I(E',\T^{h}_j)\lesssim \de^{-O(\eta+\e)}\bigg((\#\T_j^{h})^{1/2}(\de^{-s}2^{-j}\#F)^{1/2}+\#\T_j^{h}\bigg). $$
Comparing with the lower bound \eqref{lowerbound2}, when we choose $\eta,\e$ sufficiently small compared with $\tau_0$, we obtain
$$ \#\T_j^{h}\lesssim \de^{-O(\eta+\e)+\tau_0/2}\de^{-s}2^{-j}\#F,$$
which contradicts \eqref{lowerbound}, since $\eta,\e$ can be chosen much smaller than $\tau_0$.

\bibliographystyle{abbrv}
\bibliography{bibli}

\end{document}